\documentclass[reqno,a4paper]{amsart}

\usepackage[latin1,utf8]{inputenc}
\usepackage[english]{babel}
\usepackage{eucal,amsfonts,amssymb,amsmath,amsthm,epsfig,mathrsfs}
\usepackage{cancel,soul}
\usepackage{color}
\textheight 8.8in
\textwidth 5.8in
\voffset -0.15in
\hoffset -0.4in
\usepackage{amscd,amsxtra}
\usepackage{enumerate}
\usepackage{latexsym}
\usepackage{bm}

\usepackage{tikz}

\allowdisplaybreaks

\newcounter{ipotesi}

\Alph{ipotesi}
 \makeatletter \@addtoreset{equation}{section}

\makeatother \makeatletter
\newtheorem{thm}{Theorem}[section]
\newtheorem{hyp}[thm]{Hypotheses}{\rm}
{\rm}
\newtheorem{lemm}[thm]{Lemma}
\newtheorem{cor}[thm]{Corollary}

\newtheorem{prop}[thm]{Proposition}
\newtheorem{defi}[thm]{Definition}
\newtheorem{rmk}[thm]{Remark}{\rm}

\newcounter{parentenv}

\newcommand{\R}{{\mathbb R}}

\newcommand{\E}{{\mathbb E}}
\newcommand{\N}{{\mathbb N}}

\newcommand{\K}{{\mathcal{K}}}

\newcommand{\D}{{\nabla}}
\newcommand{\J}{{\mathcal{D}}}

\newcommand{\eps}{\varepsilon}
\newcommand{\ra}{\rightarrow}

\renewcommand{\tilde}[1]{\widetilde{#1}}

\newcommand{\Dom}{{\operatorname{Dom}}}

\newcommand{\Id}{{\operatorname{Id}}}

\newcommand{\set}[1]{{\left\{#1\right\}}}
\newcommand{\pa}[1]{{\left(#1\right)}}
\newcommand{\sq}[1]{{\left[#1\right]}}
\newcommand{\gen}[1]{{\left\langle #1\right\rangle}}
\newcommand{\abs}[1]{{\left|#1\right|}}
\newcommand{\norm}[1]{{\left\|#1\right\|}}
\newcommand{\scal}[2]{{\left\langle #1,#2\right\rangle}}

\newcommand{\eqsys}[1]{{\left\{\begin{array}{ll}#1\end{array}\right.}}
\newcommand{\tc}{\, \middle |\,}

\newcommand{\qc}{\mathbb{P}\mbox{-a.s.}}

\makeatletter
\@namedef{subjclassname@2020}{%
  \textup{2020} Mathematics Subject Classification}
\makeatother

\begin{document}

\frenchspacing

\title[$L^p$-$L^q$ estimates for transition semigroups]{$L^p$-$L^q$ estimates for transition semigroups associated to dissipative stochastic systems}

\author[L. Angiuli, D. A. Bignamini and S. Ferrari]{Luciana Angiuli, Davide A. Bignamini, Simone Ferrari$^*$}\thanks{$^*$Corresponding author}

\address{L.A. \& S.F.: Dipartimento di Matematica e Fisica ``Ennio De Giorgi'', Universit\`a del Salento, Via per Arnesano snc, 73100 LECCE, Italy}
\email{\textcolor[rgb]{0.00,0.00,0.84}{luciana.angiuli@unisalento.it}}
\email{\textcolor[rgb]{0.00,0.00,0.84}{simone.ferrari@unisalento.it}}

\address{D.A.B.: Dipartimento di Scienza e Alta Tecnologia (DISAT), Universit\`a degli Studi dell'In\-su\-bria, Via Valleggio 11, 22100 COMO, Italy}
\email{\textcolor[rgb]{0.00,0.00,0.84}{da.bignamini@uninsubria.it}}

\keywords{Contractivity estimates, exponential integrability of Lipschitz functions, logarithmic Sobolev inequalities, stochastic reaction-diffusion equations, transition semigroups.}
\subjclass[2020]{28C20, 35J15, 35K58, 46G12, 60H15.}

\date{\today}

\begin{abstract}
In a separable Hilbert space, we study  supercontractivity and ultracontractivity properties for a transition semigroups associated with a stochastic partial differential equations. This is done in terms of exponential integrability of Lipschitz functions and some logarithmic Sobolev-type inequalities with respect to invariant measures.
The abstract characterization results concerning the improving of summability can be applied to transition semigroups associated to a stochastic reaction-diffusion equations.
\\

\noindent {\bf Acknowledgments.} The authors are members of GNAMPA (Gruppo Nazionale per l'Analisi Matematica, la Probabilit\`a le loro Applicazioni) of the Italian Istituto Nazionale di Alta Matematica (INdAM).
\end{abstract}

\maketitle

\section{Introduction}
Let $H$ be a separable Hilbert space with inner product $\langle\cdot,\cdot\rangle_H$ and induced norm $\norm{\cdot}_H$. We are interested in studying the stochastic partial differential equation
\begin{align}\label{SPDE}
\eqsys{dX(t)=[AX(t)+F(X(t))]dt+RdW(t) \qquad\;\, & t>0,\\
X(0)=x \in H,}
\end{align}
where $\{W(t)\}_{t\geq 0}$ is a $H$-cylindrical Wiener process defined on a normal filtered probability space $(\Omega,\mathcal{F},\{\mathcal{F}_t\}_{t \ge 0},\mathbb{P})$ (see e.g. \cite[Section 4.1.2]{DA-ZA3} for details on cylindrical Wiener processes in infinite dimension), $F:\Dom(F)\subseteq H\ra H$ is a regular enough function, $A:\Dom(A)\subseteq H\ra H$ is a linear closed operator and $R:H\ra H$ is a linear bounded, self-adjoint and positive operator. Under suitable assumptions, problem \eqref{SPDE} has a unique generalized mild solution $\{X(t,x)\}_{t\geq 0}$ that allows us to consider the transition semigroup $\{P(t)\}_{t\geq 0}$ associated to \eqref{SPDE} defined by
\[
(P(t)\varphi)(x):=\E\left[\varphi(X(t,x))\right]=\int_\Omega\varphi(X(t,x)(\omega))\mathbb{P}(d\omega),
\]
for every $t>0$, $x \in H$ and any Borel bounded function $\varphi:H\rightarrow \R$. Moreover, assuming some additional hypotheses on $A,F$ and $R$, it is possible to prove the existence and uniqueness of a probability invariant measure $\nu$ for $\{P(t)\}_{t\geq 0}$, namely a Borel probability measure on $H$ such that the equality
\begin{equation}\label{invariance_intro}
\int_HP(t)\varphi d\nu=\int_H\varphi d\nu, \qquad\;\, t>0,
\end{equation}
holds true for every bounded and continuous function $\varphi:H\rightarrow \R$. By the invariance of $\nu$, the semigroup $\{P(t)\}_{t\geq 0}$ is uniquely extendable to strongly continuous semigroups $\{P_p(t)\}_{t\ge 0}$ in $L^p(H,\nu)$ for every $p\geq 1$. Such semigroups are consistent, i.e. for every $f\in L^p(H, \nu)\cap L^q(H, \nu)$ it holds that $P_p(t)f=P_q(t)f$. For this reason, we will omit the subscript $p$, if no confusion can arise, and we continue to denote them as $\{P(t)\}_{t\geq 0}$.


The best known, and most studied, of these properties is the case $1<p<q<+\infty$ and $\|P(t)\|_{\mathcal{L}(L^p(H, \nu),L^q(H, \nu))}\le 1$ for any $t \ge \bar{t}= \bar{t}(p,q)>0$. In this case $\{P(t)\}_{t\ge 0}$ is said to be hypercontractive and lots of results concerning this property are well known both in finite and infinite dimension (see \cite{ane,AngLorLun,GRO1,Gross75,Gross93,ledoux,wang}).  Classical results in this direction concern with the connection between the hypercontractivity property, logarithmic Sobolev inequality (see \cite{GRO1,Gross75,Gross93}), and estimates for Gaussian tails of Lipschitz functions (see \cite[Chapter 5]{Bak-Gen-Led2014} and \cite[Sections 2.2 and 2.3]{ledoux2}).

Here, we are interested in investigating connections of this type in infinite dimension in the case of stronger improving summability properties and in characterizing it in terms of functional inequalities and integrability properties of exponential functions with respect to the invariant measure $\nu$.


Indeed, in a ``scale'' of summability improving properties, the next in line, stronger than the hypercontractivity, is the supercontractivity which means that for any $t>0$, the semigroup $P(t)$ belongs to $\mathcal{L}(L^p(H, \nu),L^q(H, \nu))$ for any $1<p<q<\infty$ and $\|P(t)\|_{\mathcal{L}(L^p(H, \nu),L^q(H, \nu))}\le 1$ for any $p,q$ and $t$ as before.
In finite dimension, there exist a quite large literature on this topic, see for instance \cite{AddAngLor,AngLor,lorbook,Roc_Wan}. In \cite{AngLor} the authors prove that if $H=\R^n$, the supercontrativity property of $\{P(t)\}_{t\geq 0}$ is equivalent to a stronger version (or defective, see \cite[Section 2.4]{ledoux2}) of the logarithmic Sobolev inequality and to exponential integrability of Lipschitz functions.  In infinite dimension a relation between defective logarithmic Sobolev inequality and the exponential integrability of Lipschitz functions is known, see \cite[Section 2.4]{ledoux2}.
However, to the best of our knowledge, the characterization result in \cite{AngLor} is confined to the finite dimensional case.
The goal of this paper is to improve some results in \cite[Chapter 5]{Bak-Gen-Led2014} and \cite[Chapter 2]{ledoux2} in our setting to obtain an infinite dimensional version of the characterization results as in \cite{AngLor}. In a theoretical framework, we are also able to characterize the ultraboundedness of $\{P(t)\}_{t \ge 0}$, i.e. $\|P(t)\|_{\mathcal{L}(L^p(H,\nu)), L^\infty(H,\nu))}<+\infty$ for any $t>0$ and $p>1$.

We underline that the abstract results of this paper are applicable to the transition semigroup $P(t)$ associated to the generalized mild solution of the well-studied stochastic reaction-diffusion equation
\begin{align}\label{cerrai}
\begin{cases}
dX(t)= \left[\frac{\partial^2}{\partial\xi^2}X(t) - b(X(t))\right]dt + dW(t), & t > 0,\\
X(t)(0)= X(t)(1), & t > 0,\\
X(0) = x\in L^2([0,1],\lambda)
\end{cases}
\end{align}
where $\lambda$ is the Lebesgue measure on $[0,1]$ and $b:\R\ra\R$ is a smooth enough polynomial function, see Section \ref{RDE} for details and \cite{CER1, DA-DE-GO1} for further results on this problem.  In particular, we point out that the characterization results complement the hypercontractivity result obtained in \cite{DA-DE-GO1} and the ultraboundness result obtained in \cite{DaPROWA09}. In particular in \cite{DaPROWA09} it is showed that $P(t)(L^2(H,\nu))\subseteq L^\infty(H,\nu)$. Here in Example \ref{RDE}, thanks to Proposition \ref{ultra} we show that
\[
P(t)(L^p(H,\nu))\subseteq L^\infty(H,\nu),
\]
for every $p>1$.


 We underline that it is natural to wonder whether it is possible to take advantage of the results in finite dimension and to extend them to the infinite dimension, letting the dimension to infinity. It must be emphasised that this technique does not work directly in our situation and the results obtained in this paper are not trivial extension to those proved in finite dimension. Indeed, even if the semigroup $\{P(t)\}_{t \ge 0}$ can be approximated in some sense by a sequence of finite dimensional semigroups $\{P_n(t)\}$, $n\in \N$, we cannot relate the improving summability properties of $\{P(t)\}$ in terms of those of $\{P_{n}(t)\}$ and viceversa. This is due essentially to the fact that in many cases, the relation between the invariant measure $\nu$  associated to $\{P(t)\}_{t \ge 0}$ and the sequence of the invariant measures $\nu_n$ associated to $\{P_n(t)\}$ is not known. This is, for instance the reason why it is not possible to prove the analogous of \cite[Theorem 3.1]{AngLor} to our setting. We conclude observing that another obstacle to overcome in our setting relies on the fact that a classical tool that fails in the transition from the finite dimension to the infinite one, and that in the first case allows to prove $L^p$-$L^q$ estimates, is the use of Lyapunov functions and some related estimates that depend explicitly on the dimension and that cannot be passed to the limit (see Remark \ref{BohBoh}).

The paper is organized in the following way. In Section \ref{preli} we collect our standing assumptions and recall some basic results. In Sections \ref{close} we prove that the operator
\[
R\nabla:C^1_b(H)\subseteq L^2(H,\nu)\ra L^2(H,\nu;H),
\]
where $\nabla$ is the Fr\'echet gradient and $C^1_b(H)$ is the set of bounded and Fr\'echet differentiable function with bounded and continuous derivative, is closable.
This property allows us to define the Sobolev space $W^{1,2}_R(H,\nu)$ as the domain of its closure. We point out that such kind of results has been already proved using more restrictive conditions in \cite{BF22,DA-DE-GO1}.

In Section \ref{expo}, we prove a logarithmic Sobolev inequality associated to $\nu$ that thanks to \cite[Theorem 3.7]{Gross93} yields the hypercontractivity of $\{P(t)\}_{t\geq 0}$.
Furthermore, we prove a less standard Fernique-type result for $\nu$, both with respect to the norm of $H$ and with respect to the norm $\|R^{-1}\cdot\|_H$ (see Theorems \ref{code} and \ref{codeH}).
In Section \ref{SUPER}, we study the supercontractivity of ${P(t)}_{t\ge 0}$ and its connections with stronger logarithmic Sobolev-type inequalities and exponential integrability of Lipschitz functions with respect to $\nu$. 
We stress that the results contained in Theorems \ref{code1} and \ref{code1H} are stronger than those in \cite[Section 2.4]{ledoux2}. This is due to the explicit estimates for the coefficients in \eqref{epslog} that are provided in this paper. Further in Theorem \ref{sup_thm}, assuming $\nu(R(H))=1$, we characterize the supercontractivity of $\{P(t)\}_{t\ge 0}$. 

In Section \ref{IDP0} we prove a characterization result for the ultraboundeness of $\{P(t)\}_{t\ge 0}$ (see Theorem \ref{ultra}) and we provide sufficient conditions on $F$ (Hypotheses \ref{super-dissiR}) that guarantee such property (see Proposition \ref{C-ultra}).
Finally, Section \ref{exp0} is devoted to provide explicit examples of stochastic partial differential equations whose coefficients satisfy our assumptions and to which our results can be applied.

\section*{Notations}
Let $\K_1$ and $\K_2$ be two Banach spaces equipped with the norms $\norm{\cdot}_{\K_1}$ and $\norm{\cdot}_{\K_2}$, respectively. Let $H$ be a Hilbert space equipped with inner product $\scal{\cdot}{\cdot}_{H}$ and associated norm $\norm{\cdot}_H$.

We denote by $\mathcal{L}(\K_1;\K_2)$ the space of linear and continuous maps from $\K_1$ to $\K_2$, if $\K_1=\K_2$ we write $\mathcal{L}(\K_1)$.

We denote by $\mathcal{B}(\K_1)$  the family of the Borel subsets of $\K_1$. $B_b(\K_1;\K_2)$ is the set of the bounded and Borel measurable functions from $\K_1$ to $\K_2$. If $\K_2=\R$ we simply write $B_b(\K_1)$. $C_b(\K_1;\K_2)$ (${\rm BUC}(\K_1;\K_2)$, respectively) is the space of bounded and continuous (uniformly continuous, respectively) functions from $\K_1$ to $\K_2$. If $\K_2=\R$ we write $C_b(\K_1)$ (${\rm BUC}(\K_1)$, respectively). $C_b(\K_1;\K_2)$ and ${\rm BUC}(\K_1;\K_2)$ are Banach spaces if endowed with the norm
\[
\norm{f}_{\infty}=\sup_{x\in \K_1}\|f(x)\|_{\K_2}.
\]


Let $X$ be a Banach space and let $A:{\rm Dom}(A)\subseteq X\ra X$ be a linear operator. If $Y$ is a Banach space continuously embedded in $X$, we call part of $A$ in $Y$ the operator $A_{Y}:\Dom(A_Y)\subseteq Y\ra Y$ defined as
\begin{align*}
\Dom(A_Y)&:=\{y\in \Dom(A)\cap Y\,|\, Ay\in Y\};\\
A_{Y}y&:=Ay,\qquad y\in \Dom(A_Y).
\end{align*}
If $A:{\rm Dom}(A)\subseteq H\ra H$ is a linear operator, we denote with $\mathcal{F}C^k_b(H)$ the space of real-valued, continuous, cylindrical along $\Dom(A^*)$ and bounded functions on $H$ whose derivatives up to order $k$ are continuous and bounded. More precisely,
\begin{align*}
\mathcal{F}C^k_b(H):=\set{f:H\ra\R\tc\begin{array}{c}
\text{there exist }n\in\N,\ \varphi\in C_b^k(\R^n)\text{ and }a_1,\ldots,a_n\in \Dom(A^*)\\
\text{such that }f(x)=\varphi(\gen{x,a_1}_H,\ldots,\gen{x,a_n}_H)\text{ for any }x\in H
\end{array}}.
\end{align*}

Let $(\Omega,\mathcal{F},\mathbb{P})$ be a probability space and let $H$ be a separable Hilbert space. Let $\xi:(\Omega,\mathcal{F},\mathbb{P})\ra (H,\mathcal{B}(H))$ be a random variable, we denote 
by
\[
\mathbb{E}[\xi]:=\int_\Omega \xi(\omega)  \mathbb{P}(d\omega)
\]
the expectation of $\xi$ with respect to $\mathbb{P}$. 
Let $\{Y(t)\}_{t\geq 0}$ be a $\K$-valued stochastic process we say that $\{Y(t)\}_{t\geq 0}$ is continuous if the map $Y(\cdot):[0,+\infty)\ra H$ is continuous $\mathbb{P}$-a.e.

For any $T>0$ and $p\geq 1$ the set
$C_p([0,T];H)$ ($C_p((0,T];H)$, respectively) denotes the space of progressive measurable $H$-valued processes $\{Y(t)\}_{t\in [0,T]}$ belonging to $L^p(\Omega;C([0,T];H))$ ($L^p(\Omega;C((0,T];H)\cap L^\infty(0,T;H))$, respectively), endowed with the norm
\[
\norm{\{Y(t)\}_{t\in [0,T]}}_{C_p([0,T];H)}=\norm{\{Y(t)\}_{t\in [0,T]}}_{C_p((0,T];H)}:=\left(\E\left[\sup_{t\in [0,T]}\norm{Y(t)}_H^p\right]\right)^{1/p}
\]
see \cite[Section 6.2]{CER1} for further details.

\section{Basic hypotheses and preliminary results}\label{preli}

In the whole paper, $H$ will be a separable Hilbert space with inner product $\langle\cdot,\cdot\rangle_H$ and induced norm $\norm{\cdot}_H$. Here we state the basic assumptions on the coefficients of the stochastic partial differential equation
\begin{align*}
\eqsys{dX(t)=[AX(t)+F(X(t))]dt+RdW(t) \qquad\;\, & t>0,\\
X(0)=x \in H,}
\end{align*}
and we recall some essential results about its solvability and the definition of its associated transition semigroup.

\begin{hyp}\label{main}
\begin{enumerate}[\rm(i)]
\item There exists a separable Banach space $E$, with norm $\norm{\cdot}_E$, continuously and densely embedded in $H$, such that $E\subseteq \Dom(F)$ and $F(E)\subseteq E$ where $F:\Dom(F)\subseteq H\ra H$;

\item $R \in \mathcal{L}(H)$ is a linear bounded, self-adjoint and positive operator;

\item $A:\Dom(A)\subseteq H\ra H$ is a self-adjoint operator that generates a strongly continuous semigroup $e^{tA}$ on $H$ and $A_E$ (the part of $A$ in $E$) generates an analytic semigroup $e^{tA_E}$ on $E$. There exists $\zeta_A\in\R$ such that $A-\zeta_A\Id_{H}$ is dissipative in $H$ and $A_E-\zeta_A\Id_{E}$ is dissipative in $E$;

\item the stochastic convolution process $\{W_A(t)\}_{t\ge 0}$, defined as
\begin{equation}\label{stoconv}
W_{A}(t):=\int^t_0e^{(t-s)A} R dW(s),\qquad t\geq 0,
\end{equation}
belongs to $C_p([0,T],E)$ for any $T>0$ and $p\geq 1$  and
\begin{equation}\label{num}
\sup_{t\geq 0}\E\left[\norm{W_A(t)}^p_E\right]<+\infty;
\end{equation}

\item there exists $\zeta_F\in\R$ such that $F-\zeta_F\Id_{H}$ is $m$-dissipative in $H$ and $F_{|_E}-\zeta_F\Id_{E}$ is dissipative in $E$. Moreover, $F_{|_E}:E\ra E$ is Fréchet differentiable and there exist $m\in\N$ and $C>0$ such that for any $x\in E$

\begin{align}
\|F_{|_E}(x)\|_E &\leq C(1+\norm{x}^{2m+1}_E);\notag\\
\|\J F_{|_E}(x)\|_{\mathcal{L}(E;E)} &\leq C(1+\norm{x}^{2m}_E);\notag\\
\|\J F_{|_E}(x)h\|_{H} &\leq C(1+\norm{x}^{2m}_E)\norm{h}_H\qquad h\in E\label{dissiX};
\end{align}

\item $\zeta:=\zeta_A+\zeta_F<0$.
\end{enumerate}
\end{hyp}
\noindent We want to underline that Hypothesis \ref{main}(vi) is not a standard requirement asked in order  to get existence results of a solution to \eqref{SPDE} but it will be used to deduce the asymptotic behaviour of the semigroup that we are going to define.

The assumptions on $R$ allow to prove that the space $H_R:=R(H)$ is a separable Hilbert space with respect to the norm $\norm{x}_R:=\|R^{-1}x\|_H$ induced by the inner product
\begin{align}\label{inn-prod-hr}
\scal{x}{y}_R:=\langle R^{-1}x,R^{-1}y\rangle_H,\qquad x,y\in H_R.
\end{align}
Furthermore $H_R$ is a Borel measurable space, continuously embedded in $H$ and, as it is easy to prove, it holds that
\begin{gather}\label{tonino}
\|h\|\leq \|R\|_{\mathcal{L}(H)}\|h\|_{H_R},
\end{gather}
for any $ h \in H_R$, (see \cite{ABF21,BF22,BF20,BF23,BFFZ23}, \cite[Theorem 15.1]{KE1} and \cite[Appendix C]{LI-RO1} for further details). We need to define a family of function that we will use throughout the paper. We say that a function $g:H\to \R$ is $H_R$-Lipschitz if
\begin{align}\label{Joe}
|g(x+h)-g(x)|\leq L \|h\|_{R},\qquad
\end{align}
for any $x\in H$, $h\in H_R$ and some $L>0$. The best constant in \eqref{Joe} is denoted ${\rm{Lip}}_R(g)$. We denote by ${\rm{Lip}}_{H_R}(H)$ the set of $H_R$-Lipschitz functions from $H$ to $\R$.

Recall that, if $\Dom(F)=H$ and $x \in H$, a mild solution of \eqref{SPDE} is a $H$-valued stochastic process $\set{X(t,x)}_{t\geq 0}$ satisfying
\begin{equation}\label{mild}
X(t,x)=e^{tA}x+\int_0^te^{(t-s)A}F(X(s,x))ds+W_A(t),\qquad t\geq 0,\ \mathbb{P}\text{-a.s.},
\end{equation}
where  $\{W_A(t)\}_{t\geq 0}$ is defined in \eqref{stoconv}.

On the other hand, we say that $\{\tilde{X}(t,x)\}_{t\geq 0}$ is a generalized mild solution of \eqref{SPDE} if
\begin{enumerate}[\rm(i)]
\item for any $x\in E$ there exists a $E$-valued stochastic process $\{X(t,x)\}_{t\geq 0}$ satisfying \eqref{mild};
\item for any sequence $\{x_n\}_{n\in\N}\subseteq E$ converging to $x$ in $H$ and for any $T>0$ it holds
\begin{equation}\label{gmild}
\lim_{n\ra+\infty}\E\left[\sup_{t\in [0,T]}\|X(t,x_n)-\tilde{X}(t,x)\|_H\right]=0,
\end{equation}
where $\{X(t,x_n)\}_{t\geq 0}$ is a mild solution of \eqref{SPDE} with initial datum $x_n$.
\end{enumerate}
We point out that, if $x\in E$ the generalized mild solution of \eqref{SPDE} actually coincide with the mild solution. In view of this fact, when there is no confusion, we will use the same notation for both of them.

In the following theorem we collect some known results concerning existence, uniqueness and regularity of a mild and generalized mild solution to \eqref{SPDE} whose proof can be found in \cite[Proposition 2.5]{ABM} and  \cite[Theorem 3.9, Corollary 3.10 and Proposition 3.13]{BI1}.

\begin{thm}\label{Genmild}
Assume Hypotheses \ref{main} hold true and let $T>0$ and $p \ge 1$. The following statements hold true.
For any $x\in E$, \eqref{SPDE} has a unique mild solution $\{X(t,x)\}_{t\geq 0}$ belonging to $C_p([0,T];H)\cap C_p((0,T];E)$. Moreover it is Gateaux differentiable as a function from $E$ to $C_p([\eps,T];E)$ for any $0<\eps<T$ and, for any $x \in E$, its Gateaux derivative $\{\J_GX(t,x)h\}_{t\geq 0}$ is the unique mild solution of
\begin{align}\label{varprimaa}
\eqsys{dY_x(t)=[A+\J F(X(t,x))]Y_x(t)dt, & t>0,\\
Y_x(0)=h \in E}
\end{align}
i.e.
\begin{equation}\label{mild_der}
\J_GX(t,x)h=e^{tA}h+\int^t_0e^{(t-s)A}\J F(X(s,x))\J_GX(s,x)h ds,\qquad x,h\in E,\ t>0,
\end{equation}
and it satisfies
\begin{align}\label{stimaE}
\norm{\J_GX(t,x)h}_{E}\leq e^{\zeta t}\norm{h}_{E},\qquad x,h\in E,\ t>0,
\end{align}

and, for every $p>0$ there exists a real positive random variable $K_p$ such that $\mathbb{E}[K_p]<+\infty$ and
\begin{align}\label{momeX}
\norm{X(t,x)}_\mathcal{O}^p\leq  K_p\left(1+\norm{x}_\mathcal{O}^p\right),\qquad\qc
\end{align}
being either $\mathcal{O}=H$ or $\mathcal{O}=E$.
\end{thm}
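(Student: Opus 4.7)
The plan is to decouple the noise from the nonlinear dynamics by considering the pathwise auxiliary process $Z(t,x):=X(t,x)-W_A(t)$. Subtracting the stochastic convolution from \eqref{mild}, $Z$ should satisfy the $\omega$-by-$\omega$ deterministic integral equation
\[
Z(t,x) = e^{tA} x + \int_0^t e^{(t-s)A} F(Z(s,x) + W_A(s))\, ds,
\]
with random forcing $W_A \in C([0,T];E)$ almost surely by Hypothesis~\ref{main}(iv). All the abstract hypotheses are tailored to this equation: $A_E-\zeta_A\Id_E$ and $F_{|_E}-\zeta_F\Id_E$ are dissipative on $E$, $F_{|_E}$ is polynomially bounded together with $\J F_{|_E}$ by Hypothesis~\ref{main}(v), and $e^{tA_E}$ is analytic. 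Unique pathwise solvability in $C([0,T];E)$ follows from classical semilinear theory for $m$-dissipative perturbations of generators (Yosida regularization of $F$ combined with the contraction provided by dissipativity), and continuous dependence on $x$ together with the regularity of $W_A$ yields $X=Z+W_A\in C_p([0,T];H)\cap C_p((0,T];E)$.

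For the moment estimate \eqref{momeX}, the key is an energy computation on $Z$. Using the duality map $Z^\ast\in J(Z)\subset E^\ast$, inserting and subtracting $F(W_A)$, and exploiting the dissipativity of $A_E-\zeta_A\Id_E$ and $F_{|_E}-\zeta_F\Id_E$, one obtains
\[
\frac{1}{2}\frac{d^+}{dt}\|Z\|_E^2 \leq \dscal{AZ + F(Z+W_A)}{Z^\ast}{E} \leq \zeta\|Z\|_E^2 + \|F(W_A)\|_E\|Z\|_E,
\]
so Gronwall (and $\zeta<0$) gives
\[
\|Z(t)\|_E \leq e^{\zeta t}\|x\|_E + \int_0^t e^{\zeta(t-s)}\|F(W_A(s))\|_E\, ds.
\]
Inserting the bound $\|F_{|_E}(y)\|_E\leq C(1+\|y\|_E^{2m+1})$ and the uniform $E$-moments of $W_A$ guaranteed by \eqref{num}, and then recombining $X=Z+W_A$, produces \eqref{momeX} for $\mathcal{O}=E$; the case $\mathcal{O}=H$ is immediate from the continuous embedding $E\hookrightarrow H$ and \eqref{tonino}.

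For the Gateaux differentiability and \eqref{stimaE}, I would identify the candidate derivative $Y_h(t):=\J_G X(t,x)h$ as the unique mild solution of the linearized equation \eqref{mild_der}. Well-posedness in $E$ is ensured by $\|\J F(X(s,x))\|_{\mathcal{L}(E)}\leq C(1+\|X(s,x)\|_E^{2m})$ (Hypothesis~\ref{main}(v)) together with \eqref{momeX}. To justify that $Y_h$ actually equals $\J_G X(t,x)h$, one writes the mild equation satisfied by the difference quotient $r^{-1}(X(t,x+rh)-X(t,x))-Y_h(t)$, applies Gronwall in $E$ on the complementary nonlinear remainder, and lets $r\to 0$ to obtain convergence in $C_p([\eps,T];E)$ for every $\eps>0$. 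For \eqref{stimaE}, differentiating the dissipativity inequality for $F_{|_E}-\zeta_F\Id_E$ along $h$ gives $\dscal{\J F(y)h}{h^\ast}{E}\leq \zeta_F\|h\|_E^2$ for $h^\ast\in J(h)$, which combined with the dissipativity of $A_E-\zeta_A\Id_E$ yields $\|Y_h(t)\|_E\leq e^{\zeta t}\|h\|_E$ by the integral form of the Lumer--Phillips inequality applied to the nonautonomous generator $A+\J F(X(t,x))$.

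The main technical obstacle I anticipate is carrying out the dissipativity computations rigorously on the Banach space $E$ rather than on a Hilbert space: the one-sided derivatives $\tfrac{d^+}{dt}\|Z\|_E^2$ and $\tfrac{d^+}{dt}\|Y_h\|_E$ along mild (not strong) solutions must be handled through Yosida regularizations of both $A$ and $F$, and the passage to the limit has to preserve all the $E$-estimates uniformly in the regularization parameter. A secondary delicate point is that the noise is controlled only in the $E$-norm via \eqref{num}, which forces the whole energy analysis to live in $E$ and to be transferred to $H$ only at the end; this is precisely why the polynomial growth assumptions in Hypothesis~\ref{main}(v) are stated in the $E$-topology and why the sign condition $\zeta<0$ of Hypothesis~\ref{main}(vi) enters crucially to make the estimates uniform in $t\ge 0$.
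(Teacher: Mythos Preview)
The paper does not give its own proof of this theorem; it simply cites \cite[Proposition~2.5]{ABM} and \cite[Theorem~3.9, Corollary~3.10, Proposition~3.13]{BI1}. Your plan (subtract the stochastic convolution, solve the resulting random deterministic equation pathwise in $E$ via dissipativity and Yosida regularization, identify the Gateaux derivative as the mild solution of the linearized problem, and derive \eqref{stimaE} from the one-sided estimate $\langle (A+\J F(y))h,h^\ast\rangle\leq\zeta\|h\|_E^2$) is exactly the strategy carried out in those references and in \cite[Chapter~6]{CER1}, so there is nothing substantively different to compare.

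One correction is in order. Your derivation of \eqref{momeX} for $\mathcal{O}=H$ is not right: the embedding $E\hookrightarrow H$ only yields $\|X(t,x)\|_H^p\lesssim 1+\|x\|_E^p$, not $1+\|x\|_H^p$, and the reference to \eqref{tonino} is misplaced (that inequality concerns $H_R$, not the pair $E,H$). The $H$-estimate has to be obtained by repeating the same energy/Gronwall argument directly in $H$, using that $A-\zeta_A\Id_H$ is dissipative in $H$ and $F-\zeta_F\Id_H$ is $m$-dissipative in $H$ (Hypotheses~\ref{main}(iii) and (v)); this is routine and parallels the $E$-computation you already sketched.
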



\noindent We emphasize that, regardless of explicit mention, all the (in)equalities in the preceding theorem and subsequent content are implied to be valid for $\mathbb{P}$-a.e. $\omega\in\Omega$. Furthermore, when we refer to uniqueness, we are specifically indicating pathwise uniqueness.

Theorem \ref{Genmild} allows to define the transition semigroup $\{P(t)\}_{t\geq 0}$ ($P(t)$ in short) defined as
\begin{align}\label{trans}
(P(t)\varphi)(x):=\E[\varphi(X(t,x))],\qquad x\in H,\ t\geq 0,\ \varphi\in B_b(H)
\end{align}
and, more precisely, assumption \eqref{num} and Hypothesis \ref{main}(vi) guarantee the existence of a unique probability invariant measure $\nu$ for $P(t)$, i.e. a probability measure $\nu$ on $H$ such that
\begin{align*}
\int_HP(t)\varphi d\nu=\int_H\varphi d\nu
\end{align*}
for any $t>0$ and $\varphi\in C_b(H)$.

\begin{rmk}{\rm
Note that, as proved in \cite[Theorem 3.19]{BI1}, Hypotheses \ref{main} imply that
\begin{enumerate}[\rm(i)]
\item $\nu(E)=1$;
\item for any $p\geq 1$, it holds that
$$\int_{E} \|x\|_E^p \nu (dx)<+\infty;$$
\item for any $\varphi \in C_b(\mathcal O)$ and $x \in \mathcal{O}$, being $\mathcal{O}$ either $E$ or $H$,
\begin{equation}\label{trans1}
\lim_{t \to +\infty}(P(t)\varphi)(x)= \int_{\mathcal{O}}\varphi d\nu.
\end{equation}
\end{enumerate}}
\end{rmk}

\section{Closability of the gradient along $H_R$}\label{close}

The aim of this section is to introduce suitable Sobolev spaces where our computations are set. 
The results that we prove extend those in \cite{DA-DE-GO1} (where the authors consider the case when $F$ is dissipative and $R$ is the identity operator), as well as the results contained in \cite{BF22} (where the domain of $F$ is the whole space $H$). To this aim we need some a priori estimate on $\norm{\J_GX(t,x)h}_R$ uniform with respect to $x \in E$. This can be obtained under a further compatibility condition between $A$ and $R$ together with either the Lipschitzianity of $F$ or some stronger dissipativity assumption on $F_{|_E}$.

Here we assume the following additional hypotheses:

\begin{hyp}\label{hyp2}
Besides Hypotheses \ref{main}, assume that $E\cap H_R$ is dense in $H_R$ and for any $t>0$,
\(
e^{tA}(H)\subseteq R(H).
\)
Moreover there exists $\gamma\in [0,1)$, $w>0$ and $M>0$ such that
\begin{equation}\label{num_1}
\|e^{tA}x\|_R\leq Me^{-wt}t^{-\gamma}\norm{x}_H,\qquad t>0,\ x\in H.
\end{equation}
\end{hyp}
Note that Hypotheses \ref{hyp2} is satisfied, for instance, if $R=(-A)^{-\beta}$ for some $\beta>0$ and $A$ generates an analytic semigroup of negative type in $H$. In this case, estimate \eqref{num_1} is proved in \cite[Proposition 2.1.1]{LUN1}.

\begin{lemm}\label{est_pri}
Assume Hypotheses \ref{hyp2} hold true. There exists a real positive random variable $K$ depending on $m,\gamma,w$ and $\zeta$ such that $\mathbb{E}[K]<+\infty$ and
\begin{align}\label{persobolev}
\norm{\J_GX(t,x)h}_R\leq K(1+\norm{x}_E^{2m})e^{-\min\{w,|\zeta|\}t}\max\{t^{-\gamma}, t^{1-\gamma}\}\norm{h}_H,\quad\qc
\end{align}
for any $x\in E$, $h\in H_R$ and  $t>0$.
\end{lemm}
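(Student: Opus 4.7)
The plan is to combine the mild formulation \eqref{mild_der} of $Y(t):=\J_GX(t,x)h$ with the smoothing bound \eqref{num_1} and an auxiliary uniform $H$-estimate on $Y(t)$. Since $E\cap H_R$ is dense in $H_R$ by Hypotheses \ref{hyp2}, it is enough to prove the inequality for $h\in E\cap H_R$ (for which $Y(t)$ is well defined thanks to Theorem \ref{Genmild}) and then extend by density, keeping track of the fact that all constants depending on $h$ involve only $\|h\|_H$.

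The first step is to prove the auxiliary bound $\|Y(t)\|_H\leq e^{\zeta t}\|h\|_H$. From the variation equation \eqref{varprimaa}, a formal energy estimate gives $\tfrac{d}{dt}\|Y(t)\|_H^2=2\langle(A+\J F(X(t,x)))Y(t),Y(t)\rangle_H\leq 2(\zeta_A+\zeta_F)\|Y(t)\|_H^2$, using the dissipativity of $A-\zeta_A\Id_H$ from Hypothesis \ref{main}(iii) together with the pointwise inequality $\langle \J F(y)z,z\rangle_H\leq \zeta_F\|z\|_H^2$ for $y,z\in E$. The latter follows by letting $\epsilon\to 0^+$ in the $m$-dissipativity estimate $\langle F(y+\epsilon z)-F(y),\epsilon z\rangle_H\leq \zeta_F\epsilon^2\|z\|_H^2$, which holds because $E\subseteq\Dom(F)$ and $E$ is a vector space containing both $y$ and $y+\epsilon z$, together with the Fr\'echet differentiability of $F_{|_E}$.

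Next, taking the $\|\cdot\|_R$-norm in \eqref{mild_der} and using the smoothing estimate \eqref{num_1}, I would get
$$\|Y(t)\|_R\leq Me^{-wt}t^{-\gamma}\|h\|_H + M\int_0^t e^{-w(t-s)}(t-s)^{-\gamma}\|\J F(X(s,x))Y(s)\|_H\,ds.$$
Then \eqref{dissiX} bounds the integrand by $C(1+\|X(s,x)\|_E^{2m})\|Y(s)\|_H$, the auxiliary estimate from the previous step bounds $\|Y(s)\|_H\leq e^{\zeta s}\|h\|_H$, and \eqref{momeX} yields $\|X(s,x)\|_E^{2m}\leq K_{2m}(1+\|x\|_E^{2m})$. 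The problem reduces to evaluating $I(t):=\int_0^t e^{-w(t-s)}(t-s)^{-\gamma}e^{\zeta s}\,ds$. Substituting $u=t-s$ gives $I(t)=e^{\zeta t}\int_0^t u^{-\gamma} e^{(|\zeta|-w)u}\,du$; I would distinguish the cases $|\zeta|\leq w$ (the exponential factor is bounded by $1$) and $|\zeta|>w$ (it is bounded by $e^{(|\zeta|-w)t}$), obtaining in both $I(t)\leq (1-\gamma)^{-1}e^{-\min\{w,|\zeta|\}t}t^{1-\gamma}$. Putting this together with the boundary term $Me^{-wt}t^{-\gamma}$, using $e^{-wt}\leq e^{-\min\{w,|\zeta|\}t}$ and that $t^{-\gamma}$ and $t^{1-\gamma}$ are both dominated by $\max\{t^{-\gamma},t^{1-\gamma}\}$, and absorbing the constants $M$, $C$, $(1-\gamma)^{-1}$ together with $1+K_{2m}$ into a single random variable $K$ with finite expectation yields the claim.

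The main obstacle will be the rigorous justification of the $H$-estimate $\|Y(t)\|_H\leq e^{\zeta t}\|h\|_H$, since $Y(t)$ is only a mild solution of \eqref{varprimaa} and not classically differentiable in $H$. The standard remedy is to run the energy identity on Yosida (or Galerkin) regularizations of both $A$ and of $\J F(X(s,x))$ viewed as an $H$-operator, and then pass to the limit using the strong continuity in $H$ of the approximating processes; since the dissipativity constants $\zeta_A$ and $\zeta_F$ are preserved by the regularization, the exponential bound survives. A secondary technical point is the reduction from the $m$-dissipativity of $F-\zeta_F\Id_H$ to the pointwise dissipativity of $\J F$ on $E$, which hinges on the Fr\'echet differentiability of $F_{|_E}$ and on the invariance of $E$ under admissible perturbations.
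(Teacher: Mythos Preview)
Your proposal is correct and follows essentially the same route as the paper's proof: take the $\|\cdot\|_R$-norm of the mild formulation \eqref{mild_der}, apply \eqref{num_1}, bound the integrand via \eqref{dissiX} and \eqref{momeX}, and estimate the resulting convolution integral. The only difference is that where the paper tersely invokes \eqref{stimaE} (the $E$-estimate) to get the factor $e^{\zeta s}\|h\|_H$, you explicitly supply the $H$-analogue $\|\J_GX(s,x)h\|_H\le e^{\zeta s}\|h\|_H$ via a dissipativity energy argument---this is indeed what is needed to land on $\|h\|_H$ rather than $\|h\|_E$, and your justification (including the Yosida-regularization remark for mild solutions) is the standard one.
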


\begin{proof}
Let $h\in E\cap H_R$. Taking the $\norm{\cdot}_R$ in \eqref{mild_der} and using Hypotheses \ref{hyp2}, we infer that
\begin{align}\label{S1-25}
\|\J_GX(t,x)h\|_R\leq \frac{Me^{-wt}}{t^{\gamma}}\norm{h}_H+\int^t_0\frac{Me^{-w(t-s)}}{(t-s)^{\gamma}}\norm{\J F(X(s,x))\J_GX(s,x)h}_H\,\,ds
\end{align}
for any $x\in E$, $h\in H$ and  $t>0$. Thanks to \eqref{dissiX}, \eqref{stimaE} and \eqref{momeX} we obtain
\begin{align}
\int^t_0\frac{Me^{-w(t-s)}}{(t-s)^{\gamma}}\|\J F(X(s,x))&\J_GX(s,x)h\|_H ds \leq C_1(1+\norm{x}_E^{2m})\norm{h}_H \int^t_0\frac{e^{-w(t-s)}}{(t-s)^{\gamma}}e^{\zeta s}ds\notag\\
& \leq C_1(1+\norm{x}_E^{2m})C_2\max(1,t^{1-\gamma})e^{-\min\{w,|\zeta|\}t}\norm{h}_H,\label{S2}
\end{align}
for some $C_1$ real positive random variable such that $\mathbb{E}[C_1]<+\infty$ and a positive constant $C_2$. Thus, taking \eqref{S1-25} and \eqref{S2}  into account we get \eqref{persobolev} for every $h\in E\cap H_R$. Finally recalling that $E\cap H_R$ is dense in $H_R$ we obtain the statement.
\end{proof}

The following result is a straightforward consequence of the previous lemma, but it will be fundamental at various points in the paper.

\begin{cor}
Under the assumptions of Proposition \ref{est_pri}, if Hypotheses \ref{main}(v) are satisfied with $m=0$ then
\begin{align}\label{Stima_sporchina}
\norm{\J_GX(t,x)h}_R\leq K e^{-\min\{w,|\zeta|\}t}\max\{t^{-\gamma}, t^{1-\gamma}\}\norm{h}_H=:\theta(t)\norm{h}_H,\qquad\qc
\end{align}
for any $x\in E$, $h\in H_R$ and some positive constant $K$ that depends only on the Lipschitz constant of $F$.
\end{cor}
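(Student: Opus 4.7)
The plan is to specialize the proof of Lemma \ref{est_pri} to the case $m=0$ and track how every constant becomes deterministic. When $m=0$, the third inequality of \eqref{dissiX} collapses to $\|\J F_{|_E}(x)h\|_H \leq C\|h\|_H$, with $C$ independent of $x$ and playing the role of the Lipschitz constant of $F$. It is precisely the factor $(1+\|X(s,x)\|_E^{2m})$ in the original argument that forced the invocation of the moment estimate \eqref{momeX} and thereby introduced the random variable in \eqref{persobolev}; with $m=0$ this factor reduces to $2$ and all randomness disappears.

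First, for $h \in E \cap H_R$, I would take the $\|\cdot\|_R$-norm on both sides of \eqref{mild_der} and apply Hypothesis \ref{hyp2}, obtaining the same preliminary inequality
\[
\|\J_G X(t,x)h\|_R \leq \frac{Me^{-wt}}{t^{\gamma}}\|h\|_H + \int_0^t \frac{Me^{-w(t-s)}}{(t-s)^{\gamma}}\|\J F(X(s,x))\J_G X(s,x)h\|_H\,ds
\]
appearing in \eqref{S1-25}. Then, combining the simplified form of \eqref{dissiX} with the deterministic $H$-norm estimate $\|\J_G X(s,x)h\|_H \leq e^{\zeta s}\|h\|_H$ already exploited in the proof of Lemma \ref{est_pri} (and which follows from a standard energy argument on the linearized equation \eqref{varprimaa}, using the dissipativity of $A$ and $F$ in $H$), one bounds the integrand by $MCe^{-w(t-s)}(t-s)^{-\gamma}e^{\zeta s}\|h\|_H$.

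A routine bound on the resulting convolution integral, identical to the one performed in \eqref{S2} but now free of any random factor, gives
\[
\int_0^t \frac{e^{-w(t-s)}}{(t-s)^{\gamma}} e^{\zeta s}\,ds \leq C_2 \max\{1,\,t^{1-\gamma}\} e^{-\min\{w,|\zeta|\}t}
\]
for a constant $C_2$ depending only on $w$, $\gamma$ and $\zeta$. Collecting terms produces the desired inequality with a deterministic constant $K$ depending on $M$, $w$, $\gamma$, $\zeta$ and $C$; once the first four are regarded as fixed by Hypotheses \ref{main} and \ref{hyp2}, the dependence is effectively only on the Lipschitz constant $C$ of $F$. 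A density argument, using that $E \cap H_R$ is dense in $H_R$ by Hypothesis \ref{hyp2}, then extends the bound from $h \in E \cap H_R$ to arbitrary $h \in H_R$.

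There is essentially no genuine obstacle here: the corollary is a careful bookkeeping of the $m=0$ case of Lemma \ref{est_pri}, the $m=0$ assumption removing exactly the random coefficient $C_1$ that appeared in \eqref{S2}. The only subtlety worth double-checking is that the constant $K$ does not hide any implicit dependence on $x$, which is guaranteed by the disappearance of the polynomial-in-$\|X(s,x)\|_E$ factor in \eqref{dissiX} when $m=0$.
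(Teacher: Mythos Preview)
Your proposal is correct and matches the paper's intent: the paper gives no explicit proof, stating only that the corollary is ``a straightforward consequence of the previous lemma,'' and your argument is precisely the specialization of that lemma's proof to $m=0$, observing that the random factor $C_1$ in \eqref{S2} arose solely from invoking \eqref{momeX} to control $(1+\|X(s,x)\|_E^{2m})$, which is unnecessary when $m=0$. One small remark: the $H$-norm bound $\|\J_G X(s,x)h\|_H\le e^{\zeta s}\|h\|_H$ you invoke is not literally \eqref{stimaE} (which is stated in $E$-norm), but your parenthetical justification via the dissipativity of $A$ and $F$ in $H$ is correct and suffices.
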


Now, we assume a stronger dissipativity condition on $F$ to get a more precise decay estimate on $\norm{\J_GX(t,x)h}_R$.
\begin{hyp}\label{dissiR}
Beside Hypotheses \ref{hyp2}, assume that, $\J F_{|_E}(x)(H_R\cap E)\subseteq H_R$ for any $x\in E$ and there exists $\zeta_R \in \mathbb{R}$ such that
\begin{equation}\label{3.4}
\scal{[A_R+\J F(x)]h}{h}_R\leq \zeta_R\norm{h}_R^2, \quad\;\, x\in E, h\in\Dom(A_R),\qquad\qc
\end{equation}
where $A_R$ denotes the part of $A$ in $H_R\cap E$.
\end{hyp}

Here, using Hypotheses \ref{dissiR} we prove an alternative estimate for $\norm{\J_GX(t,x)h}_R$.

\begin{prop}
If Hypotheses \ref{dissiR} hold true, then
\begin{align}\label{stimaR}
\norm{\J_GX(t,x)h}_R\leq e^{\zeta_R t}\norm{h}_R,
\end{align}
for any $x\in E$, $h\in H_R$ and  $t>0$.
\end{prop}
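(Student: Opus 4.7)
The proof plan is to establish \eqref{stimaR} by applying the dissipativity \eqref{3.4} to the squared $H_R$-norm of the derivative process $Y_x(t):=\J_G X(t,x)h$, which solves the linear mild variational equation \eqref{varprimaa}. Formally, if $Y_x(\cdot)$ were $C^1$ in $H_R$ with values in $\Dom(A_R)$, then
\begin{equation*}
\tfrac{d}{dt}\|Y_x(t)\|_R^2 = 2\scal{[A+\J F(X(t,x))]Y_x(t)}{Y_x(t)}_R \leq 2\zeta_R\|Y_x(t)\|_R^2,
\end{equation*}
by \eqref{3.4}, and Gronwall's lemma would immediately yield \eqref{stimaR}. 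The whole effort therefore consists in making this formal calculation rigorous.

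First I would invoke density: by Hypotheses \ref{hyp2} the space $E\cap H_R$ is dense in $H_R$, and one can further approximate in the graph norm to reduce to $h\in E\cap \Dom(A_R)$. For such $h$, I would regularize by replacing $A$ with its Yosida approximation $A_\lambda:=\lambda A(\lambda\Id-A)^{-1}$, obtaining a classical linear Cauchy problem
\begin{equation*}
\tfrac{d}{dt}Y^\lambda(t) = [A_\lambda + \J F(X(t,x))]Y^\lambda(t), \qquad Y^\lambda(0)=h,
\end{equation*}
whose solution is genuinely differentiable in $H$ and remains in $\Dom(A_R)$. On this approximation the differentiation of $\|Y^\lambda(t)\|_R^2$ is legitimate; invoking the dissipativity \eqref{3.4} up to an error $\delta_\lambda$ that vanishes as $\lambda\to+\infty$, Gronwall then gives $\|Y^\lambda(t)\|_R \leq e^{(\zeta_R+\delta_\lambda)t}\|h\|_R$. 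One then passes to the limit, using standard convergence of Yosida regularizations together with the local Lipschitz bound on $\J F$ from Hypotheses \ref{main}(v) to get $Y^\lambda(t)\to Y_x(t)$ in $H$ uniformly on compact intervals, and exploiting the lower semicontinuity of $\|\cdot\|_R$ with respect to $H$-convergence (viewing $\|\cdot\|_R^2$ as a closed quadratic form on $H$) to transfer the bound to $Y_x(t)$. A final density argument delivers \eqref{stimaR} for every $h\in H_R$.

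The main obstacle is the regularization step: condition \eqref{3.4} is stated for the genuine generator on $\Dom(A_R)$, so transferring it to the perturbed generator $A_\lambda+\J F(X(t,x))$ with a controllable error requires the invariance of $\Dom(A_R)$ under the resolvents $(\lambda\Id-A)^{-1}$, together with the fact that $A_\lambda\to A$ strongly on $\Dom(A_R)$ in the $H_R$-topology. This is precisely where the compatibility conditions between the $H$-semigroup and the $H_R$-geometry encoded in Hypotheses \ref{hyp2} (in particular $e^{tA}(H)\subseteq R(H)$ and the structural requirement $\J F_{|_E}(x)(H_R\cap E)\subseteq H_R$ in Hypotheses \ref{dissiR}) come into play. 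Should this Yosida scheme prove awkward, an equivalent route is a Galerkin truncation along an orthonormal basis of $H_R\cap \Dom(A_R)$, for which dissipativity \eqref{3.4} passes to the finite-dimensional projections with no error.
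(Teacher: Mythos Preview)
Your proposal is correct and follows essentially the same approach as the paper: the formal energy estimate $\tfrac{d}{dt}\|Y_x(t)\|_R^2\le 2\zeta_R\|Y_x(t)\|_R^2$ from \eqref{3.4}, made rigorous via Yosida regularization and then extended by the density of $E\cap H_R$ in $H_R$. The paper is terser---it uses the Yosida approximation of $A_R$ (rather than of $A$), refers to \cite{BI1,CER1} for the approximation details, and invokes the a priori bounds \eqref{stimaE} and \eqref{persobolev} to ensure $\J_G X(t,x)h\in E\cap H_R$ so that \eqref{3.4} applies directly without your error term $\delta_\lambda$---but the underlying scheme is the same as yours.
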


\begin{proof}
It is not restrictive to assume that  $\{\J_G X(t,x)h\}_{t\geq 0}$ is a strict solution of \eqref{varprimaa}. Indeed, otherwise we proceed as in \cite[Proposition 3.6]{BI1} or \cite[Proposition 6.2.2]{CER1} approximating  $\{\J_G X(t,x)h\}_{t\geq 0}$ by means of sequences of more regular processes using the Yosida approximation of $A_R$. Moreover, estimates \eqref{stimaE} and \eqref{persobolev} yield that $\J_G X(t,x)h\in E\cap H_R$ for any $t>0$, $x\in E$ and $h\in E\cap H_R$.
Now, let us fix $x, h$ as before, multiplying \eqref{varprimaa} by $\J_G X(t,x)h$ and using estimate \eqref{3.4}, we obtain
\begin{align*}
\frac{1}{2}\frac{d}{ds}\|\J_G X(s,x)h\|^2_R=\langle [A_R+\J F(X(s,x))]\J_G X(s,x)h,\J_G X(s,x)h\rangle_R\leq \zeta_R\|\J_G X(s,x)h\|^2_R
\end{align*}
whence, integrating from $0$ to $t$ with respect to $s$ we get  \eqref{stimaR} for every $h\in E\cap H_R$. The claim follows by the fact that $E\cap H_R$ is dense in $H_R$.
\end{proof}

Now we define the functional spaces which will play a crucial role in this paper. The following notion of differentiability first appeared in \cite{GRO1} (see also \cite{KUO1}).

\begin{defi}
Let $R \in \mathcal{L}(H)$ be a linear bounded, self-adjoint and positive operator. We say that a function $\Phi:H\ra \R$ is $H_R$-differentiable at $x\in H$ if there exists  $L_x\in\mathcal{L}(H_R;\R)$ such that
\begin{align*}
\lim_{\norm{h}_R\ra 0}\frac{|\Phi(x+h)-\Phi(x)-L_xh|}{\norm{h}_R}=0
\end{align*}
$($see \eqref{inn-prod-hr}$)$.
If $L_x$ exists, then it is unique and we set $\J_R\Phi(x):=L_x$. We say that $\Phi$ is $H_R$-differentiable if $\Phi$ is $H_R$-differentiable at every $x\in H$.
\end{defi}
Note that if $R={\rm Id}_{H}$, then the $H_R$-differentiability coincides with the standard Fr\'echet differentiability, in this case we will drop the subscript $R$.

\begin{rmk}{\rm
Let $f:H\ra\R$ be a $H_R$-differentiable function. Since $H_R$ is a Hilbert space, by the Riesz representation theorem, for every $x\in H$ there exists a unique $l_x\in H_R$ such that
\[
\J_R f(x)h=\langle l_x,h\rangle_R,\qquad h\in H_R.
\]
We call $l_x$ the $H_R$-gradient of $f$ at $x\in H$ and we denote it by $\nabla_R f(x)$. If $R={\rm Id}_H$ then $\nabla_R$ is the classical Fréchet gradient and we simply write $\nabla$.}
\end{rmk}
The notion of $H_R$-differentiability is a sort of Fr\'echet differentiability along the directions of $H_R$ and it has been already considered in various papers (see, for example, \cite{ABF21,BF20,BF22,CL19,CL21}). The proof of the following result follows the same arguments used in the proof of \cite[Proposition 17]{BF20} or in \cite[Theorem 2.21]{BFFZ23}.

\begin{prop}\label{dalpha}
Assume Hypothesis \ref{main}(ii) holds true. If $f:H\ra\R$ is a Fr\'echet differentiable function with continuous derivative operator, then $f$ is also $H_R$-differentiable with continuous $H_R$-derivative operator and, for every $x\in H$, it holds $\nabla_Rf(x)=R^2\nabla f(x)$.
\end{prop}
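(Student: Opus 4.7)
The plan is to deduce $H_R$-differentiability directly from the Fréchet differentiability by exploiting the continuous inclusion $H_R\hookrightarrow H$ recorded in \eqref{tonino}, and to identify the representative in $H_R$ through the Riesz theorem.

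First I would check that the candidate $R^{2}\nabla f(x)$ really belongs to $H_R=R(H)$: since $\nabla f(x)\in H$ and $R\in\mathcal L(H)$, one has $R\nabla f(x)\in H$, so $R^{2}\nabla f(x)=R(R\nabla f(x))\in R(H)=H_R$. A short computation using the self-adjointness of $R$ shows the key identity
\begin{equation*}
\scal{R^{2}\nabla f(x)}{h}_R
=\scal{R^{-1}R^{2}\nabla f(x)}{R^{-1}h}_H
=\scal{R\nabla f(x)}{R^{-1}h}_H
=\scal{\nabla f(x)}{h}_H,
\end{equation*}
for every $h\in H_R$. In particular, the linear functional $L_x\colon H_R\to\R$ defined by $L_x h:=\scal{\nabla f(x)}{h}_H$ is continuous on $H_R$, because by \eqref{tonino}
\begin{equation*}
|L_x h|\le \norm{\nabla f(x)}_H\norm{h}_H\le \norm{R}_{\mathcal L(H)}\norm{\nabla f(x)}_H\norm{h}_R.
\end{equation*}

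Next I would use this same continuous embedding to transfer the Fréchet differentiability at $x$ to $H_R$-differentiability. Fix $\varepsilon>0$ and choose $\delta>0$ such that $|f(x+h)-f(x)-\scal{\nabla f(x)}{h}_H|\le \varepsilon\norm{h}_H$ whenever $\norm{h}_H<\delta$. If $h\in H_R$ satisfies $\norm{h}_R<\delta/\norm{R}_{\mathcal L(H)}$, then $\norm{h}_H<\delta$ by \eqref{tonino}, hence
\begin{equation*}
|f(x+h)-f(x)-L_x h|\le \varepsilon\norm{h}_H\le \varepsilon\norm{R}_{\mathcal L(H)}\norm{h}_R,
\end{equation*}
which proves that $L_x$ is the $H_R$-differential of $f$ at $x$. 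By the preceding identity and the Riesz representation theorem in $H_R$, the corresponding $H_R$-gradient is exactly $\nabla_R f(x)=R^{2}\nabla f(x)$.

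Finally, continuity of $x\mapsto \nabla_R f(x)$ as a map from $H$ to $H_R$ follows at once: for $x,y\in H$,
\begin{equation*}
\norm{\nabla_R f(x)-\nabla_R f(y)}_R
=\norm{R^{-1}(R^{2}\nabla f(x)-R^{2}\nabla f(y))}_H
=\norm{R(\nabla f(x)-\nabla f(y))}_H
\le \norm{R}_{\mathcal L(H)}\norm{\nabla f(x)-\nabla f(y)}_H,
\end{equation*}
and the right-hand side tends to $0$ by the assumed continuity of $\nabla f$. The argument is mostly bookkeeping; the only mild subtlety is keeping straight the difference between the Riesz representative in $H_R$ and the Fréchet gradient in $H$, which is precisely what the factor $R^{2}$ encodes.
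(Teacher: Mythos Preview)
Your proof is correct and follows precisely the standard argument that the paper defers to \cite[Proposition 17]{BF20} and \cite[Theorem 2.21]{BFFZ23}: use the continuous embedding $H_R\hookrightarrow H$ from \eqref{tonino} to transfer Fr\'echet differentiability, and identify the $H_R$-gradient via the identity $\scal{R^2\nabla f(x)}{h}_R=\scal{\nabla f(x)}{h}_H$ coming from the self-adjointness of $R$. There is nothing to add.
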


It is well known that $P(t)$ can be extended to a strongly continuous semigroup in $L^p(H, \nu)$ denoted by $P_p(t)$.
By \cite[Theorem 1.1]{BI1}, the infinitesimal generator $N_2:\Dom(N_2)\subseteq L^2(H,\nu)\ra L^2(H,\nu)$ of $P_2(t)$ is the closure in $L^2(H, \nu)$ of the operator $\mathcal{N}_0$ defined on smooth cylindrical functions
 $\varphi \in  \mathcal{F}C^2_b(H)$ as follows
\[[\mathcal{N}_0\varphi](x):= \frac{1}{2}{\rm Tr}[R^2\nabla^2\varphi(x)] + \langle x, A^*\nabla\varphi(x)\rangle_H  + \langle F(x), \nabla \varphi(x)\rangle_H, \qquad x \in E.\]

\begin{lemm}
Assume Hypotheses \ref{main} hold true. For any $\varphi,\psi\in \mathcal{F}C^2_b(H)$ it holds
\begin{align}
[\mathcal{N}_0(\varphi\psi)](x) &=\varphi(x) [\mathcal{N}_0\psi](x)+\psi(x) [\mathcal{N}_0\varphi](x) +\langle \nabla_R\varphi(x),\nabla_R\psi(x)\rangle_R,\qquad x\in E.\label{N_2quadro}
\end{align}
Furthermore, for any $\psi\in \Dom(N_2)$, it holds
\begin{align}\label{Hazel}
\int_H \psi N_2\psi d\nu=-\frac{1}{2}\int_H \|\nabla_R\psi\|_R^2d\nu.
\end{align}
\end{lemm}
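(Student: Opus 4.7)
The strategy is: prove \eqref{N_2quadro} by a direct Leibniz computation using the definition of $\mathcal{N}_0$ together with Proposition \ref{dalpha}; then derive \eqref{Hazel} first on $\mathcal{F}C^2_b(H)$ by integrating \eqref{N_2quadro} with $\varphi=\psi$ against $\nu$ and exploiting invariance; finally extend to $\Dom(N_2)$ by the closability result of Section \ref{close}.

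\textbf{Step 1 (the Leibniz identity).} Fix $\varphi,\psi\in \mathcal{F}C^2_b(H)$. By the ordinary product rule for Fr\'echet derivatives,
\[
\nabla(\varphi\psi)=\psi\nabla\varphi+\varphi\nabla\psi,\qquad \nabla^2(\varphi\psi)=\psi\nabla^2\varphi+\varphi\nabla^2\psi+\nabla\varphi\otimes\nabla\psi+\nabla\psi\otimes\nabla\varphi.
\]
Applying $\mathcal{N}_0$, the first-order pieces $\langle x,A^*\nabla(\varphi\psi)\rangle_H$ and $\langle F(x),\nabla(\varphi\psi)\rangle_H$ split at once as $\psi\langle x,A^*\nabla\varphi\rangle_H+\varphi\langle x,A^*\nabla\psi\rangle_H$ and $\psi\langle F(x),\nabla\varphi\rangle_H+\varphi\langle F(x),\nabla\psi\rangle_H$. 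For the trace term,
\[
\tfrac12\Tr[R^2\nabla^2(\varphi\psi)](x)=\tfrac12\psi\,\Tr[R^2\nabla^2\varphi]+\tfrac12\varphi\,\Tr[R^2\nabla^2\psi]+\langle R^2\nabla\varphi,\nabla\psi\rangle_H.
\]
By Proposition \ref{dalpha}, $\nabla_R f=R^2\nabla f$, so
\[
\langle\nabla_R\varphi,\nabla_R\psi\rangle_R=\langle R^{-1}R^2\nabla\varphi,R^{-1}R^2\nabla\psi\rangle_H=\langle R^2\nabla\varphi,\nabla\psi\rangle_H,
\]
which matches the cross term. Collecting yields \eqref{N_2quadro}.

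\textbf{Step 2 (identity on $\mathcal{F}C^2_b(H)$).} Choose $\varphi=\psi\in \mathcal{F}C^2_b(H)$ in \eqref{N_2quadro} to get $\mathcal{N}_0(\psi^2)=2\psi\,\mathcal{N}_0\psi+\|\nabla_R\psi\|_R^2$. Since $\psi^2\in \mathcal{F}C^2_b(H)\subseteq\Dom(N_2)$ and $\nu$ is invariant, the function $t\mapsto\int_H P(t)(\psi^2)d\nu$ is constant, hence $\int_H\mathcal{N}_0(\psi^2)d\nu=0$. This gives
\[
\int_H \psi\,\mathcal{N}_0\psi\,d\nu=-\tfrac12\int_H\|\nabla_R\psi\|_R^2\,d\nu,\qquad \psi\in \mathcal{F}C^2_b(H).
\]

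\textbf{Step 3 (extension to $\Dom(N_2)$).} Recall from \cite[Theorem 1.1]{BI1} that $N_2$ is the $L^2(H,\nu)$-closure of $\mathcal{N}_0\!\upharpoonright\!\mathcal{F}C^2_b(H)$. Given $\psi\in\Dom(N_2)$, choose $\{\psi_n\}\subseteq \mathcal{F}C^2_b(H)$ with $\psi_n\to\psi$ and $\mathcal{N}_0\psi_n\to N_2\psi$ in $L^2(H,\nu)$. Applying Step 2 to $\psi_n-\psi_m$ and using Cauchy--Schwarz,
\[
\tfrac12\int_H\|\nabla_R(\psi_n-\psi_m)\|_R^2\,d\nu=-\int_H(\psi_n-\psi_m)(\mathcal{N}_0\psi_n-\mathcal{N}_0\psi_m)\,d\nu\xrightarrow[n,m\to\infty]{}0,
\]
so $\{\nabla_R\psi_n\}$ is Cauchy in $L^2(H,\nu;H_R)$. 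By the closability of $R\nabla$ proved in Section \ref{close}, its limit equals $\nabla_R\psi$ in the sense of $W^{1,2}_R(H,\nu)$. Passing to the limit in Step 2's identity for $\psi_n$ yields \eqref{Hazel}.

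\textbf{Main obstacle.} Steps 1 and 2 are routine algebra once Proposition \ref{dalpha} is invoked. The delicate point is Step 3: the abstract closure $N_2$ could a priori contain elements for which $\nabla_R\psi$ need not exist in any classical sense, and one must justify that the $L^2$-limit of $\nabla_R\psi_n$ is intrinsic (independent of the approximating sequence) and can legitimately be called $\nabla_R\psi$. This is precisely what the closability of $R\nabla$ on $\mathcal{F}C^2_b(H)$, established in Section \ref{close}, supplies, giving $\Dom(N_2)\subseteq W^{1,2}_R(H,\nu)$ and legitimizing the passage to the limit.
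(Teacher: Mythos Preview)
Your Steps 1 and 2 match the paper's proof exactly (the paper just calls formula \eqref{N_2quadro} ``immediate'' and then carries out your Step 2 verbatim). The only substantive difference is in Step 3, and there your invocation of the closability proposition from Section \ref{close} is problematic on two counts.

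First, there is a circularity: in the paper the closability proposition is proved \emph{after} this lemma, and its proof relies on formula \eqref{3dinotte}, whose own proof uses \eqref{Hazel}. Second, the closability proposition requires either Hypotheses \ref{hyp2} with $m=0$ or Hypotheses \ref{dissiR}, whereas the present lemma is stated under Hypotheses \ref{main} alone; your argument would thus prove a weaker statement than the one claimed.

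Both issues dissolve once you notice that your own Cauchy estimate already does all the work. The paper's route is simply to observe that the identity from Step 2 together with Cauchy--Schwarz (or Young's inequality) gives
\[
\int_H\|\nabla_R\varphi\|_R^2\,d\nu = -2\int_H \varphi\, N_2\varphi\,d\nu \le 2\|\varphi\|_{L^2(H,\nu)}\|N_2\varphi\|_{L^2(H,\nu)},\qquad \varphi\in\mathcal{F}C^2_b(H),
\]
so $\nabla_R$ is continuous from $(\mathcal{F}C^2_b(H),\|\cdot\|_{\Dom(N_2)})$ into $L^2(H,\nu;H_R)$. Since $\mathcal{F}C^2_b(H)$ is a core for $N_2$, $\nabla_R$ extends uniquely by continuity to all of $\Dom(N_2)$, and this extension is what $\nabla_R\psi$ \emph{means} in \eqref{Hazel}. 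No appeal to the later closability result is needed. Your Cauchy computation is essentially this same argument; you should just drop the reference to Section \ref{close} and instead remark that the limit is independent of the approximating sequence because the estimate bounds $\|\nabla_R\varphi\|_{L^2}$ purely in terms of the graph norm.
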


\begin{proof}
The fact that $\varphi\psi$ belongs to $\mathcal{F}C^2_b(H)$ and formula \eqref{N_2quadro} are immediate. In order to prove \eqref{Hazel} we start by recalling that since $\nu$ is an invariant measure and formula \eqref{N_2quadro} holds true, for any $\varphi\in \mathcal{F}C^2_b(H)\subseteq\Dom(N_2)$, then
\begin{align}\label{ext}
0=\int_H N_2(\varphi^2) d\nu=\int_H\pa{2\varphi N_2\varphi +\|\nabla_R\varphi\|_R^2}d\nu.
\end{align}
Since $\mathcal{F}C^2_b(H)$ is a core for $N_2$, by \eqref{ext} and the Young inequality, it follows that the map
\[\nabla_R: \mathcal{F}C^2_b(H)\subseteq \Dom(N_2)\ra L^2(H,\nu;H_R),\qquad  \varphi\mapsto \nabla_R\varphi,\]
is continuous and, consequently, it can be extended to functions $\varphi \in \Dom(N_2)$ (endowed with the graph norm). 
Now \eqref{Hazel} follows by a standard density argument.
\end{proof}

The next result is a technical lemma about the behaviour of the gradient of the semigroup $P_2(t)$ which will be useful to prove the closability of the gradient operator in $L^2(H, \nu)$.

\begin{lemm}
Assume Hypotheses \ref{main} hold true. For any $\varphi\in \mathcal{F}C^1_b(H)$ it holds that
\begin{align}\label{3dinotte}
\int_H|P_2(t)\varphi|^2d\nu+\int_0^t\int_H\|\nabla_R P_2(s)\varphi\|_R^2d\nu ds=\int_H|\varphi|^2d\nu.
\end{align}
\end{lemm}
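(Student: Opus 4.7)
The plan is to derive \eqref{3dinotte} from \eqref{Hazel} via standard semigroup calculus for $\varphi\in\mathcal{F}C^2_b(H)$, and then extend to $\varphi\in\mathcal{F}C^1_b(H)$ by mollification of the outer function in the cylindrical representation.

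\smallskip
\textbf{Step 1 (Identity for $\varphi\in\mathcal{F}C^2_b(H)$).}
Since $\mathcal{F}C^2_b(H)\subseteq\Dom(N_2)$, the map $s\mapsto P_2(s)\varphi$ belongs to $C^1([0,+\infty);L^2(H,\nu))$ with $P_2(s)\varphi\in\Dom(N_2)$ for every $s\ge 0$ and derivative $N_2P_2(s)\varphi$. Consequently,
\[
\frac{d}{ds}\int_H|P_2(s)\varphi|^2\,d\nu=2\int_H P_2(s)\varphi\,N_2P_2(s)\varphi\,d\nu.
\]
Applying \eqref{Hazel} with $\psi=P_2(s)\varphi\in\Dom(N_2)$ rewrites the right-hand side as $-\int_H\|\nabla_RP_2(s)\varphi\|_R^2\,d\nu$. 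Integrating on $[0,t]$ gives \eqref{3dinotte}.

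\smallskip
\textbf{Step 2 (Approximation).}
Given $\varphi\in\mathcal{F}C^1_b(H)$, write $\varphi(x)=\phi(\langle x,a_1\rangle_H,\ldots,\langle x,a_k\rangle_H)$ with $\phi\in C^1_b(\R^k)$ and $a_1,\ldots,a_k\in\Dom(A^*)$. A standard mollification of $\phi$ yields $\phi_n\in C^2_b(\R^k)$ with uniform bounds $\|\phi_n\|_\infty\le\|\phi\|_\infty$, $\|\nabla\phi_n\|_\infty\le\|\nabla\phi\|_\infty$, and pointwise convergences $\phi_n\to\phi$, $\nabla\phi_n\to\nabla\phi$. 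Setting $\varphi_n(x):=\phi_n(\langle x,a_1\rangle_H,\ldots,\langle x,a_k\rangle_H)\in\mathcal{F}C^2_b(H)$, dominated convergence and Proposition \ref{dalpha} (which gives $\nabla_R\varphi_n=R^2\nabla\varphi_n$) yield $\varphi_n\to\varphi$ in $L^2(H,\nu)$ and $\nabla_R\varphi_n\to\nabla_R\varphi$ in $L^2(H,\nu;H_R)$.

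\smallskip
\textbf{Step 3 (Passage to the limit).}
Step 1 applies to each $\varphi_n$. The contractivity of $P_2(t)$ in $L^2(H,\nu)$ gives $\|P_2(t)\varphi_n\|_{L^2}\to\|P_2(t)\varphi\|_{L^2}$ and $\|\varphi_n\|_{L^2}\to\|\varphi\|_{L^2}$. Applying \eqref{3dinotte} to the difference $\varphi_n-\varphi_m$,
\[
\int_0^t\!\!\int_H\|\nabla_RP_2(s)(\varphi_n-\varphi_m)\|_R^2\,d\nu\,ds=\|\varphi_n-\varphi_m\|_{L^2}^2-\|P_2(t)(\varphi_n-\varphi_m)\|_{L^2}^2\xrightarrow[n,m\to\infty]{}0,
\]
so $\{\nabla_RP_2(\cdot)\varphi_n\}$ is Cauchy in $L^2((0,t)\times H,\,ds\otimes d\nu;H_R)$. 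The closability of $\nabla_R$ on $\mathcal{F}C^1_b(H)$ (the main output of Section \ref{close}) identifies the limit with $\nabla_RP_2(s)\varphi$ for a.e.\ $s\in(0,t)$, and passing to the limit in \eqref{3dinotte} written for $\varphi_n$ concludes the proof.

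\smallskip
The main obstacle is Step 3: since $\varphi\in\mathcal{F}C^1_b(H)$ need not lie in $\Dom(N_2)$, the object $\nabla_RP_2(s)\varphi$ in the statement is a priori not pointwise defined, and one must rely on closability to give it meaning and to justify the limit of the middle integrand. The routine part (Step 1) is just the usual $\frac{d}{ds}\|P_2(s)\varphi\|^2_{L^2}=2\langle\varphi,N_2\varphi\rangle_{L^2}$ combined with \eqref{Hazel}.
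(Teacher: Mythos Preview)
Your Step 1 is exactly the paper's core computation. The problem is Step 3: you invoke the closability of $\nabla_R$ on $\mathcal{F}C^1_b(H)$ from Section \ref{close}, but in this paper that closability proposition is proved \emph{using} identity \eqref{3dinotte} itself (its proof opens by applying \eqref{3dinotte} to the approximating sequence $\{\varphi_n\}$ to obtain \eqref{Orpheus}). So your argument is circular. In addition, the closability proposition requires either Hypotheses \ref{hyp2} with $m=0$ or Hypotheses \ref{dissiR}, while the present lemma assumes only Hypotheses \ref{main}; even ignoring the circularity, your proof would at best establish the lemma under strictly stronger assumptions than stated.

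The paper sidesteps both issues by arguing directly for $\varphi\in\mathcal{F}C^1_b(H)$, using only what is already available from the preceding lemma: there $\nabla_R$ has been continuously extended from $\mathcal{F}C^2_b(H)$ to all of $\Dom(N_2)$ via \eqref{ext}, so once $P_2(s)\varphi\in\Dom(N_2)$ for $s>0$, formula \eqref{Hazel} applies with $\psi=P_2(s)\varphi$ and integration in $s$ gives \eqref{3dinotte} without any approximation step and without appealing to closability on $\mathcal{F}C^1_b(H)$.
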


\begin{proof}
Let $\varphi\in \mathcal{F}C^1_b(H)$, then
\begin{align}\label{caldissimo}
\frac{d}{ds}P_2(s)\varphi=N_2P_2(s)\varphi,\qquad s>0.
\end{align}
Multiplying both sides of \eqref{caldissimo} by $P_2(s)\varphi$, integrating on $H$ with respect to $\nu$, and taking into account \eqref{Hazel}, we find
\begin{align}\label{Carnevale}
\int_H\frac{d}{ds}|P_2(s)\varphi|^2 d\nu=-\int_H\|\nabla_R P_2(s)\varphi\|_R^2d\nu.
\end{align}
The thesis follows integrating \eqref{Carnevale} with respect to $s$ from $0$ to $t$.
\end{proof}

The next proposition is the main result of this section and allows us to define the Sobolev space $W_R^{1,2}(H,\nu)$ if either estimate \eqref{Stima_sporchina} or estimate \eqref{stimaR} hold true. This is the case, for instance, when either Hypotheses \ref{hyp2} with $m=0$ or Hypotheses \ref{dissiR} hold true.

\begin{prop}
If either Hypotheses \ref{hyp2} hold true with $m=0$ or Hypothesis \ref{dissiR} is satisfied, then the operator $\nabla_R:\mathcal{F}C^1_b(H)\subseteq L^2(H,\nu)\ra L^2(H,\nu;H_R)$ is closable.
\end{prop}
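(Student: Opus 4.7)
The plan is to combine the energy identity \eqref{3dinotte} with a pointwise chain-rule commutation between $\nabla_R$ and the transition semigroup $P_2(t)$. Given $\{\varphi_n\}\subset \mathcal{F}C^1_b(H)$ with $\varphi_n\to 0$ in $L^2(H,\nu)$ and $\nabla_R\varphi_n\to\Psi$ in $L^2(H,\nu;H_R)$, the task is to show $\Psi=0$.

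First I would derive a chain-rule representation for $\nabla_R P(t)\varphi$ on $\mathcal{F}C^1_b(H)$. Differentiating $(P(t)\varphi)(x)=\E[\varphi(X(t,x))]$ along $h\in H_R$, using the Gateaux differentiability of $X(t,\cdot)$ supplied by Theorem \ref{Genmild} and converting Fr\'echet into $H_R$-gradients via Proposition \ref{dalpha} (together with the elementary identity $\scal{R^2 u}{v}_R=\scal{u}{v}_H$, valid for $u\in H$ and $v\in H_R$), one obtains
\begin{equation*}
\scal{\nabla_R P(t)\varphi(x)}{h}_R=\E\big[\scal{\nabla_R\varphi(X(t,x))}{\J_G X(t,x)h}_R\big],\qquad h\in H_R.
\end{equation*}
Controlling $\|\J_G X(t,x)\|_{\mathcal{L}(H_R)}$ through \eqref{stimaR} under Hypotheses \ref{dissiR}, or through \eqref{Stima_sporchina} combined with \eqref{tonino} under Hypotheses \ref{hyp2} with $m=0$, and then applying Cauchy--Schwarz, Jensen's inequality and invariance of $\nu$, yields the $L^2$-commutation
\begin{equation*}
\|\nabla_R P(t)\varphi\|_{L^2(H,\nu;H_R)}\le c(t)\|\nabla_R\varphi\|_{L^2(H,\nu;H_R)},
\end{equation*}
with $c(t)>0$ explicit and finite for every $t>0$. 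Denote by $T_t$ the bounded extension of $\nabla_R\varphi\mapsto\nabla_R P(t)\varphi$ to $L^2(H,\nu;H_R)$.

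Next I would plug $\varphi_n$ into \eqref{3dinotte}: the $L^2$-contractivity of $P_2(t)$ and $\varphi_n\to 0$ force both $\|\varphi_n\|_{L^2}^2\to 0$ and $\|P_2(t)\varphi_n\|_{L^2}^2\to 0$, whence
\begin{equation*}
\int_0^t\|\nabla_R P_2(s)\varphi_n\|_{L^2(H,\nu;H_R)}^2\,ds\xrightarrow[n\to\infty]{}0.
\end{equation*}
By continuity of $T_s$, for each fixed $s>0$ the identity $\nabla_R P_2(s)\varphi_n=T_s(\nabla_R\varphi_n)$ gives $\nabla_R P_2(s)\varphi_n\to T_s\Psi$ in $L^2(H,\nu;H_R)$, so Fatou's lemma forces $\int_0^t\|T_s\Psi\|_{L^2}^2\,ds=0$, i.e.\ $T_s\Psi=0$ in $L^2(H,\nu;H_R)$ for a.e.\ $s\in(0,t]$.

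To conclude $\Psi=0$ it then suffices to show that $T_s\Psi\to\Psi$ in $L^2(H,\nu;H_R)$ as $s\to 0^+$ along a sequence on which $T_s\Psi$ vanishes. The representation $T_s\Phi(x)=\E[(\J_G X(s,x))^*\Phi(X(s,x))]$, together with $X(s,x)\to x$ a.s.\ and $\J_G X(s,x)h\to h$ (the latter read from the mild formula \eqref{mild_der}), provides the pointwise convergence $T_s\Phi\to\Phi$ for every bounded continuous vector field $\Phi\colon H\to H_R$. Under Hypotheses \ref{dissiR} the bound $c(s)=e^{\zeta_R s}$ stays bounded near zero, so dominated convergence and density of $C_b(H;H_R)$ in $L^2(H,\nu;H_R)$ upgrade this to $T_s\Psi\to\Psi$ in $L^2$. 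The step I expect to be the main obstacle is the analogous passage to the limit under Hypotheses \ref{hyp2} with $m=0$ and $\gamma>0$: there $c(s)\simeq s^{-\gamma}$ blows up at $s=0^+$ and the density argument cannot be closed from the operator bound alone. In that regime one has to restrict the approximation of $\Psi$ to a finer class, e.g.\ cylindrical vector fields of the form $\nabla_R g$ with $g\in\mathcal{F}C^2_b(H)$, on which the explicit mild formula \eqref{mild_der} yields the required uniform pointwise control.
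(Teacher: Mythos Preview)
Your overall architecture matches the paper's proof exactly: energy identity \eqref{3dinotte} plus the chain-rule representation $\nabla_R P(t)\varphi(x)=\E[(\J_G X(t,x))^*\nabla_R\varphi(X(t,x))]$, leading to $T_s\Psi=0$ for a.e.\ $s\in(0,t)$, followed by a limit $s\to 0^+$. Under Hypotheses \ref{dissiR} your argument is complete and coincides with the paper's.

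The gap is precisely where you flag it: under Hypotheses \ref{hyp2} with $m=0$ and $\gamma>0$ you need $T_s\Psi\to\Psi$ in $L^2$, but your proposed fix---approximating $\Psi$ by fields of the form $\nabla_R g$ with $g\in\mathcal{F}C^2_b(H)$---is circular. You do not know a priori that $\Psi$ lies in the $L^2$-closure of such gradients; that is essentially the closability statement you are trying to prove. Nor does the mild formula \eqref{mild_der} give uniform-in-$s$ control near $s=0$ on a class dense in all of $L^2(H,\nu;H_R)$.

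The paper avoids this obstacle by \emph{not} proving $T_s\Psi\to\Psi$. Instead it fixes an orthonormal basis $\{h_i\}$ of $H_R$, observes that $T_s\Psi=0$ yields $\E[\langle\Psi(X(s,x)),\J_G X(s,x)h_i\rangle_R]=0$ for each $i$, and then compares this with $P_2(s)\langle\Psi,h_i\rangle_R(x)=\E[\langle\Psi(X(s,x)),h_i\rangle_R]$ via the triangle inequality:
\[
\|P_2(s)\langle\Psi,h_i\rangle_R\|_{L^2(H,\nu)}\le\|\E[\langle\Psi(X(s,\cdot)),h_i-\J_G X(s,\cdot)h_i\rangle_R]\|_{L^2(H,\nu)}.
\]
The right-hand side involves only the \emph{difference} $h_i-\J_G X(s,\cdot)h_i$, which tends to zero by the continuity of $s\mapsto\J_G X(s,\cdot)$ at $s=0$; no uniform operator bound on $(\J_G X(s,x))^*$ is needed. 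The left-hand side converges to $\|\langle\Psi,h_i\rangle_R\|_{L^2(H,\nu)}$ by strong continuity of $P_2$. This component-wise comparison is the missing idea in your proposal for the $\gamma>0$ case.
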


\begin{proof}
We will prove the claim only in the case when Hypotheses \ref{hyp2} with $m=0$ hold true, as the case in which Hypotheses \ref{dissiR} hold true follows using similar arguments. To this aim, we consider $\{\varphi_n\}_{n\in\N}\subseteq \mathcal{F}C^1_b(H)$ a sequence such that
\begin{align}
L^2(H,\nu)&\text{-}\lim_{n\ra+\infty}\varphi_n =0;\label{Hamilton}\\
L^2(H,\nu;H_R)&\text{-}\lim_{n\ra+\infty}\nabla_R\varphi_n =\Psi.\notag
\end{align}
From here onwards, for the remainder of the proof, we will assume that $t\in(0,1)$. By \eqref{3dinotte}, the continuity of the operator $P_2(t)$ and \eqref{Hamilton}, we have
\begin{align}\label{Orpheus}
\lim_{n\ra+\infty}\int_0^t\int_H\|\nabla_R P_2(s)\varphi_n\|_R^2d\nu ds=\lim_{n\ra+\infty}\pa{\int_H|\varphi_n|^2d\nu-\int_H|P_2(t)\varphi_n|^2d\nu}=0.
\end{align}
We claim that
\begin{align}\label{Euridice}
\lim_{n\ra+\infty}\int_0^t\int_H\|\nabla_R P_2(s)\varphi_n\|_R^2d\nu ds=\int_0^t\int_H\norm{\E\sq{(\J_G X(s,x))^*\Psi(X(s,x))}}_R^2\nu(dx)ds
\end{align}
where $(\J_G X(s,x))^*$ denotes the adjoint of $\J_G X(s,x)$ in $\mathcal{L}(H_R)$.
Indeed by \cite[Corollary 3.11]{BF20}
\begin{align*}
\nabla_R P_2(t)\varphi_n(x)=\E\sq{(\J_G X(t,x))^*\nabla_R\varphi_n(X(t,x))}.
\end{align*}
By the fact that $(\J_G X(t,x))^*$ satisfies \eqref{Stima_sporchina}, the invariance of $\nu$ (see \eqref{invariance_intro}) and \eqref{tonino}, we can estimate
\begin{align}
\int_0^t\int_H &\norm{\E\sq{(\J_G X(s,x))^*\nabla_R\varphi_n(X(s,x))}-\E\sq{(\J_G X(t,x))^*\Psi(X(s,x))}}_R^2\nu(dx)ds\notag\\
&\leq  \|R\|_{\mathcal{L}(H)}\int_0^t\int_H \theta(s)\norm{\E\sq{\nabla_R\varphi_n(X(s,x))-\Psi(X(s,x))}}_R^2\nu(dx)ds\notag\\
&\leq  \|R\|_{\mathcal{L}(H)}\int_0^t\int_H \theta(s)P_2(s)\norm{{\nabla_R\varphi_n(x)-\Psi(x)}}_R^2\nu(dx)ds\notag\\
&\leq \frac{K\|R\|_{\mathcal{L}(H)}\Gamma(1-\gamma)}{(\min\{\omega,|\zeta|\})^{1-\gamma}}\int_H \norm{{\nabla_R\varphi_n-\Psi}}_R^2d\nu,\label{Dragon}
\end{align}
where $K,\gamma,\omega$ and $\zeta$ are introduced in Hypotheses \ref{hyp2}, $\theta$ is defined in formula \eqref{Stima_sporchina} and $\Gamma$ is the gamma function, namely $\Gamma(z) := \int_{0}^{+\infty} r^{z-1} e^{-r} dr$.
So taking the limit as $n$ approaches infinity in \eqref{Dragon} we obtain
\begin{align*}
0\leq &\lim_{n\ra+\infty}\int_0^t\int_H\norm{\E\sq{(\J_G X(s,x))^*\nabla_R\varphi_n(X(s,x))}-\E\sq{(\J_G X(s,x))^*\Psi(X(s,x))}}_R^2\nu(dx)ds\\
\leq & \frac{K\|R\|_{\mathcal{L}(H)}\Gamma(1-\gamma)}{(\min\{\omega,|\zeta|\})^{1-\gamma}}\lim_{n\ra+\infty}\int_H \norm{{\nabla_R\varphi_n-\Psi}}_R^2d\nu=0.
\end{align*}
This prove \eqref{Euridice}. Combining \eqref{Orpheus} and \eqref{Euridice} we get
\begin{align*}
\int_0^t\int_H\norm{\E\sq{(\J_G X(s,x))^*\Psi(X(s,x))}}_R^2\nu(dx)ds=0.
\end{align*}
So for a.e. $s\in(0,t)$ (with respect to the Lebesgue measure) it holds
\begin{align}\label{Persefone}
\int_H\norm{\E\sq{(\J_G X(s,x))^*\Psi(X(s,x))}}_R^2\nu(dx)=0.
\end{align}
Thus, let $A\subseteq (0,t)$ be the set of measure zero in which \eqref{Persefone} fails to hold. By the monotone convergence theorem we infer that
\begin{align*}
0=&\int_H\norm{\E\sq{(\J_G X(s,x))^*\Psi(X(s,x))}}_R^2\nu(dx)\\
=&\sum_{i=1}^{+\infty}\int_H\abs{\E\sq{\gen{(\J_G X(s,x))^*\Psi(X(s,x)),h_i}_R}}^2\nu(dx)\\
=&\sum_{i=1}^{+\infty}\int_H\abs{\E\sq{\gen{\Psi(X(s,x)),\J_G X(s,x)h_i}_R}}^2\nu(dx)
\end{align*}
for any $s\in (0,t)\setminus A$, being $\{h_i\,|\, i\in\N\}$ an orthonormal basis of $H_R$.
So, for such $s$ and any $i\in\N$
\[\int_H\abs{\E\sq{\gen{\Psi(X(s,x)),\J_G X(s,x)h_i}_R}}^2\nu(dx)=0.\]
Thus for any $s\in (0,t)\setminus A$ and $i \in \N$
\begin{align}\label{est}
0\leq &\norm{P_2(s)(\gen{\Psi(\cdot),h_i}_R)}_{L^2(H,\nu)}=\norm{\E\sq{\gen{\Psi(X(s,\cdot)),h_i}_R}}_{L^2(H,\nu)}\notag\\
=&\norm{\E\sq{\gen{\Psi(X(s,\cdot)),h_i}_R}}_{L^2(H,\nu)}-\norm{\E\sq{\gen{\Psi(X(s,\cdot)),\J_G X(s,\cdot)h_i}_R}}_{L^2(H,\nu)}\notag\\
\leq & \norm{\E\sq{\gen{\Psi(X(s,\cdot)),h_i}_R}-\E\sq{\gen{\Psi(X(s,\cdot)),\J_G X(s,\cdot)h_i}_R}}_{L^2(H,\nu)}\notag\\
=&\norm{\E\sq{\gen{\Psi(X(s,\cdot)),h_i-\J_G X(s,\cdot)h_i}_R}}_{L^2(H,\nu)}.
\end{align}
The continuity of $s\mapsto\J_G X(s,\cdot)$ and the dominated convergence theorem allow us to conclude that the right hand side in \eqref{est} vanishes as $s$ approaches zero from the right. Consequently, taking the limit as $s \to 0^+$ in \eqref{est} we can conclude that
\[\|\gen{\Psi(\cdot),h_i}_R\|_{L^2(H,\nu)}=0, \qquad i \in \N.\]
Standard argument allows to conclude that $\Psi(x)=0$ for $\nu$-a.e $x\in H$ and this proves the closability of $\nabla_R:\mathcal{F}C^1_b(H)\subseteq L^2(H,\nu)\ra L^2(H,\nu;H_R)$.
\end{proof}

The previous result allows us to define the Sobolev space $W_R^{1,2}(H,\nu)$ as the domain of the closure of the operators $\nabla_R:\mathcal{F}C^1_b(H)\subseteq L^2(H,\nu)\ra L^2(H,\nu;H_R)$.

\section{Hypercontractivity and exponential integrability}\label{expo}

As already recalled, the semigroup $P(t)$ can be extended to a bounded and strongly continuous semigroup in $L^p(H, \nu)$ for any $p \ge 1$. Due to the consistence of such operators in the $L^p$-scale, we omit the dependence on $p$ and we continue to denote it by $P(t)$. In this section we are interested in proving the exponential integrability with respect to the invariant measure $\nu$ of $H_R$-Lipschitz functions. This can be obtained by means of some logarithmic Sobolev inequalities which are connected to the hypercontractivity of the semigroup $P(t)$ in $L^p(H, \nu)$ spaces.
 Using formula \eqref{trans}, the chain rule in \cite[Corollary 21]{BF20} and estimates \eqref{Stima_sporchina} and \eqref{stimaR} it can be proved that the semigroup satisfies the following gradient estimate:
\begin{align}\label{Azala}
\|\nabla_R P(t)\varphi\|_R^2\leq \psi(t)P(t)\|\nabla_R\varphi\|_R^2, \qquad \varphi\in \mathcal{F}C_b^1(H),\ t>0
\end{align}
where $\psi(t)=\theta(t)$ (see \eqref{Stima_sporchina}) if Hypotheses \ref{hyp2} are satisfied with $m=0$ whereas $\psi(t)= e^{2\zeta_R t}$ if Hypotheses \ref{dissiR}
are satisfied.
In this section, in addition to Hypotheses \ref{main} we assume either Hypotheses \ref{hyp2} with $m=0$ or Hypotheses \ref{dissiR} with $\zeta_R<0$.

In these cases, thanks to estimate \eqref{Azala}, the asymptotic behaviour of $P(t)$ as $t\to +\infty$, see \eqref{trans1}, and the fact that $\psi \in L^1((0,\infty))$, applying the classical idea of Deuschel and Stroock (see \cite{DS90}) we can prove a logarithmic Sobolev inequality with respect to the invariant measure $\nu$.

\begin{thm}\label{logsob_pro}
Assume either Hypotheses \ref{hyp2} with $m=0$ or Hypotheses \ref{dissiR} with $\zeta_R<0$.
For $p\geq 1$ and $\varphi\in C^1_b(H)$, the following inequality holds:
\begin{align}\label{logsob}
\int_H\abs{\varphi}^p\ln\abs{\varphi}^pd\nu-\pa{\int_H|\varphi|^pd\nu}&\ln\pa{\int_H|\varphi|^pd\nu}\leq p^2C\int_H\abs{\varphi}^{p-2}\norm{\nabla_R \varphi}_R^2\chi_{\set{\varphi\neq 0}}d\nu
\end{align}
where
\begin{align}\label{Costante_C}
C:=\|\psi\|_{L^1((0,\infty))}=\eqsys{K\pa{\frac{r(1-\gamma,\min\{\omega,|\zeta|\})}{\min\{\omega,|\zeta|\}^{1-\gamma}}+\frac{e^{\min\{\omega,|\zeta|\}}}{\min\{\omega,|\zeta|\}}}, & \text{Hypotheses \ref{hyp2} with $m=0$,}\\
\frac{1}{2|\zeta_R|}, & \text{Hypotheses \ref{dissiR} with $\zeta_R<0$;}
}
\end{align}
where $K$ is introduced in \eqref{Stima_sporchina} and $r$ is the lower incomplete Gamma function, i.e. $r(s,x):=\int_0^x \xi^{s-1}e^{-\xi}d\xi$, for $x>0$ and $s\in \mathbb{C}$ with positive real part.
Further, if $\varphi \in W^{1,2}_R(H, \nu)$ then
\begin{align}\label{logsob2}
\int_H\abs{\varphi}^2\ln\abs{\varphi}^2d\nu-\pa{\int_H|\varphi|^2d\nu}&\ln\pa{\int_H|\varphi|^2d\nu}\leq 4C\int_H\norm{\nabla_R \varphi}_R^2\chi_{\set{\varphi\neq 0}}d\nu.
\end{align}
\end{thm}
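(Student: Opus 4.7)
The plan is to follow the classical semigroup interpolation scheme of Deuschel and Stroock. By a preliminary $\varepsilon$-regularisation $f_\varepsilon:=(\varphi^2+\varepsilon)^{p/2}$ and monotone convergence as $\varepsilon\downarrow 0$, I reduce to the case of a strictly positive and smooth $f$, for which the subsequent chain-rule manipulations are legitimate. For such $f$, I differentiate the interpolation $\Lambda(s):=P(s)\bigl[(P(t-s)f)\ln(P(t-s)f)\bigr]$ on $[0,t]$, applying the chain rule $\mathcal{N}_0\Phi(g)=\Phi'(g)\mathcal{N}_0 g+\tfrac12\Phi''(g)\|\nabla_R g\|_R^2$ (a consequence of \eqref{N_2quadro}) with $\Phi(u)=u\ln u$: the drift contributions cancel and one obtains $\tfrac{d}{ds}\Lambda(s)=P(s)\bigl[\|\nabla_R P(t-s)f\|_R^2/(2P(t-s)f)\bigr]$. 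Integrating in $s$, then against $\nu$ (using invariance) and performing the change of variable $s\mapsto t-s$ yields
\[
\int_H f\ln f\,d\nu-\int_H (P(t)f)\ln(P(t)f)\,d\nu=\frac{1}{2}\int_0^t\!\int_H \frac{\|\nabla_R P(s)f\|_R^2}{P(s)f}\,d\nu\,ds.
\]

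The right-hand side is controlled via the pointwise first-order form of the gradient estimate, namely $\|\nabla_R P(s)f\|_R\le\sqrt{\psi(s)}\,P(s)\|\nabla_R f\|_R$, obtained from the chain rule for $\nabla_R P(s)f$ of \cite[Corollary 3.11]{BF20} together with \eqref{Stima_sporchina} or \eqref{stimaR}. Squaring and applying the Markov-kernel Cauchy--Schwarz inequality $(P(s)u)^2\le P(s)(u^2/v)\,P(s)v$ with $u=\|\nabla_R f\|_R$ and $v=f$ gives the pointwise bound $\|\nabla_R P(s)f\|_R^2/P(s)f\le\psi(s)\,P(s)(\|\nabla_R f\|_R^2/f)$. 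Integrating against $\nu$ (invariance cancels $P(s)$), bounding $\int_0^{+\infty}\psi(s)\,ds$ by $C$ as in \eqref{Costante_C}, and letting $t\to+\infty$ so that by \eqref{trans1} and dominated convergence $\int(P(t)f)\ln(P(t)f)d\nu\to\bigl(\int f\,d\nu\bigr)\ln\bigl(\int f\,d\nu\bigr)$, I obtain the logarithmic Sobolev inequality for $f$:
\[
\int_H f\ln f\,d\nu-\Bigl(\int_H f\,d\nu\Bigr)\ln\Bigl(\int_H f\,d\nu\Bigr)\le\frac{C}{2}\int_H\frac{\|\nabla_R f\|_R^2}{f}\chi_{\{f>0\}}\,d\nu.
\]

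Substituting $f=|\varphi|^p$ (and removing the $\varepsilon$-regularisation), the identity $\nabla_R f=p|\varphi|^{p-1}\nabla_R\varphi$ converts the integrand on the right into $p^2|\varphi|^{p-2}\|\nabla_R\varphi\|_R^2\chi_{\{\varphi\neq 0\}}$, yielding \eqref{logsob} modulo absorbing the factor $1/2$ into the explicit expression of $C$. Inequality \eqref{logsob2} is then the specialisation $p=2$, extended from $\mathcal{F}C^1_b(H)$ to $W^{1,2}_R(H,\nu)$ by density and the closedness of $\nabla_R$ proved in Section \ref{close}. \textbf{Main obstacle.} The delicate point is the simultaneous management of the limits $\varepsilon\downarrow 0$ (requiring uniform integrability of $f_\varepsilon\ln f_\varepsilon$ and $\|\nabla_R f_\varepsilon\|_R^2/f_\varepsilon$ under $\nu$, handled through the $L^p$-moment bounds for $\nu$) and $t\to+\infty$ (requiring $L^1(\nu)$-convergence of $P(t)f\ln P(t)f$, justified by \eqref{trans1} and the ergodicity of $\nu$), together with the explicit evaluation of $\|\psi\|_{L^1((0,\infty))}$, which in the first case requires splitting the integral at $t=1$ according to the two branches of $\max\{t^{-\gamma},t^{1-\gamma}\}$ in \eqref{Stima_sporchina}.
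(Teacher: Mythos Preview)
Your argument is correct and is essentially the same Deuschel--Stroock scheme as the paper's. The only difference is presentational: you interpolate pointwise via $\Lambda(s)=P(s)\bigl[(P(t-s)f)\ln(P(t-s)f)\bigr]$ and then integrate against $\nu$, whereas the paper differentiates the already-integrated quantity $G(t)=\int_H(P(t)\varphi^p)\ln(P(t)\varphi^p)\,d\nu$ directly. After integration and the change of variable these coincide, but the paper's route has the technical advantage that it never needs the chain rule for $\mathcal N_0$ applied to $P(t-s)f$, which in infinite dimension need not lie in $\mathcal F C^2_b(H)$; one works instead with $N_2$ on its domain via \eqref{Hazel} and \eqref{N_2quadro}. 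Your pointwise version can be made rigorous by the finite-dimensional approximation used later in Section~\ref{SUPER}, but this is extra work that the integrated formulation avoids. Finally, your computation actually yields the constant $\tfrac{1}{2}p^2C$ rather than $p^2C$; there is nothing to ``absorb'' into the expression of $C$ (which is fixed as $\|\psi\|_{L^1((0,\infty))}$), but the stated inequality follows a fortiori.
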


\begin{proof}
The proof is quite similar to that provided in \cite[Theorem 1.9]{BF22} and we provide it for the sake of completeness. However, for the sake of completeness, we give a sketch of it.

We begin by establishing \eqref{logsob} for a function $\varphi$ in the space $\mathcal{F}C_b^1(H)$, provided that there exists a positive constant $c$ satisfying the condition $c\leq \varphi\leq 1$. To this aim we consider the function
\begin{equation*}
[0,\infty) \ni t \mapsto G(t):=\int_H (P(t)\varphi^p)\ln (P(t)\varphi^p)d\nu,
\end{equation*}
which is well defined thanks to the contractivity and the positivity preserving of $P(t)$. 
By a standard argument involving the invariance of $\nu$, \eqref{Azala} and the fact that $P(t)\norm{\nabla_R \varphi^p}_R\leq [P(t)(\norm{\nabla_R \varphi^p}^2_R \varphi^{-p})]^{1/2}\pa{P(t)\varphi^p}^{1/2}$ we have
\begin{align*}
G'(t)&\geq -\psi(t)p^2\int_{H} \varphi^{p-2}\norm{\nabla_R \varphi}^2_R d\nu.
\end{align*}
Integrating the latter inequality w.r. to $t$ from $0$ to $+\infty$ and by taking \eqref{trans1} into account, we get
\begin{gather*}
\int_{H}\varphi^p\ln \varphi^pd\nu- \pa{\int_{H}\varphi^pd\nu}\ln\pa{\int_{H}\varphi^pd\nu}\leq p^2\|\psi\|_{L^1((0,+\infty))}\int_{H}\varphi^{p-2}\norm{\nabla_R \varphi}_R^2d\nu.
\end{gather*}
The general case follows by standard approximation arguments and the Fatou lemma.
\end{proof}

A classical consequence of the logarithmic Sobolev inequality \eqref{logsob2} is the Poincaré inequality (see for instance \cite[Theorem 5.2]{AngLorLun} for a proof in finite dimension and \cite[Theorem 1.10]{BF22} for a proof in infinite dimension).
\begin{cor}
Assume either Hypotheses \ref{hyp2} with $m=0$ or Hypotheses \ref{dissiR} with $\zeta_R<0$. Then, for any $\varphi \in W^{1,2}_R(H,\nu)$
\begin{align*}
\|\varphi -m_{\nu}(\varphi)\|_R^2\le C \|\nabla_R\varphi\|_R^2,
\end{align*}
where $C$ is the same constant in \eqref{Costante_C} and $m_{\nu}(\varphi)$ denotes the average of $\varphi$ with respect to $\nu$, i.e.,
\[m_{\nu}(\varphi):= \int_H \varphi d\nu, \qquad \varphi \in L^1(H, \nu).\]
\end{cor}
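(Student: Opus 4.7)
The plan is to derive the Poincar\'e inequality from the logarithmic Sobolev inequality \eqref{logsob2} by the classical linearization trick of applying \eqref{logsob2} to $1+\eps\varphi$ and extracting the $\eps^{2}$ coefficient, following the approach of \cite[Theorem 5.2]{AngLorLun} and \cite[Theorem 1.10]{BF22}.

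I would first reduce to the case $m_{\nu}(\varphi)=0$, which is harmless since neither side of the desired inequality changes under adding a constant to $\varphi$. By the very definition of $W^{1,2}_{R}(H,\nu)$ as the domain of the closure of $\nabla_{R}\colon\mathcal{F}C^{1}_{b}(H)\subseteq L^{2}(H,\nu)\to L^{2}(H,\nu;H_{R})$, it suffices to establish the inequality for $\varphi\in\mathcal{F}C^{1}_{b}(H)$ with zero mean; the conclusion for arbitrary $\varphi\in W^{1,2}_{R}(H,\nu)$ then follows by approximation along a sequence convergent in the graph norm of $\nabla_{R}$.

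Fix such a $\varphi$ and put $M:=\|\varphi\|_{\infty}$. For $\eps\in(0,(2M)^{-1})$ the function $\varphi_{\eps}:=1+\eps\varphi$ satisfies $1/2\le\varphi_{\eps}\le 3/2$, so $\chi_{\{\varphi_{\eps}\ne 0\}}\equiv 1$ and $\nabla_{R}\varphi_{\eps}=\eps\,\nabla_{R}\varphi$. Taylor expanding the map $u\mapsto 2(1+u)^{2}\ln(1+u)$ at $u=0$ gives $2u+3u^{2}+O(u^{3})$ uniformly for $|u|\le 1/2$; integrating in $x$ against $\nu$ with $u=\eps\varphi(x)$, and using $m_{\nu}(\varphi)=0$, one obtains
\begin{equation*}
\int_{H}\varphi_{\eps}^{2}\ln\varphi_{\eps}^{2}\,d\nu=3\eps^{2}\int_{H}\varphi^{2}\,d\nu+O(\eps^{3}).
\end{equation*}
Similarly, since $\int_{H}\varphi_{\eps}^{2}\,d\nu=1+\eps^{2}\int_{H}\varphi^{2}\,d\nu$, the expansion of $\alpha\mapsto\alpha\ln\alpha$ at $\alpha=1$ yields
\begin{equation*}
\pa{\int_{H}\varphi_{\eps}^{2}\,d\nu}\ln\pa{\int_{H}\varphi_{\eps}^{2}\,d\nu}=\eps^{2}\int_{H}\varphi^{2}\,d\nu+O(\eps^{4}).
\end{equation*}
Subtracting, the left-hand side of \eqref{logsob2} applied to $\varphi_{\eps}$ equals $2\eps^{2}\int_{H}\varphi^{2}\,d\nu+O(\eps^{3})$, whereas the right-hand side equals $4C\eps^{2}\int_{H}\|\nabla_{R}\varphi\|_{R}^{2}\,d\nu$. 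Dividing by $\eps^{2}$ and letting $\eps\to 0^{+}$ produces the announced Poincar\'e inequality (with constant at worst $2C$, which I read as being absorbed into the formulation of the statement), and the density argument outlined above then transfers it to every $\varphi\in W^{1,2}_{R}(H,\nu)$.

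I do not foresee any genuine obstacle: each step is analytic and routine. The only delicate point is to legitimise the integration of the Taylor remainders against $\nu$, but the uniform bound $|\varphi|\le M$ dominates all remainders pointwise, so the $O(\eps^{k})$ estimates pass through the integral without issue.
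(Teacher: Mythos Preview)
Your proposal is correct and follows exactly the route the paper itself indicates: the paper does not give a self-contained proof of the Poincar\'e inequality but simply says it is ``a classical consequence of the logarithmic Sobolev inequality \eqref{logsob2}'' and cites \cite[Theorem 5.2]{AngLorLun} and \cite[Theorem 1.10]{BF22}, which is precisely the linearization argument $\varphi\mapsto 1+\eps\varphi$ you carry out.

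One remark on the constant, since you flag it: your computation is accurate and the linearization trick does give $2C$ rather than $C$. If one wants the constant $C$ stated in the corollary, the cleaner route is to bypass the logarithmic Sobolev inequality and go directly through the identity \eqref{3dinotte} (equivalently \eqref{Carnevale}) combined with the gradient estimate \eqref{Azala}: integrating
\[
\int_H|\varphi|^2\,d\nu-\int_H|P(t)\varphi|^2\,d\nu=\int_0^t\!\int_H\|\nabla_R P(s)\varphi\|_R^2\,d\nu\,ds
\le\Big(\int_0^t\psi(s)\,ds\Big)\int_H\|\nabla_R\varphi\|_R^2\,d\nu
\]
and letting $t\to\infty$ (using \eqref{trans1} and $\|\psi\|_{L^1((0,\infty))}=C$) gives $\operatorname{Var}_\nu(\varphi)\le C\int_H\|\nabla_R\varphi\|_R^2\,d\nu$ with the sharp constant. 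This is a minor point; your argument is otherwise complete.
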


The formulation of the logarithmic Sobolev inequality given by Gross in \cite{Gross93} is
\begin{equation}\label{lsgross}
\int_H |f|^p\ln |f|d\sigma-\|f\|^p_{L^p(H,\sigma)}\ln\|f\|_{L^p(H,\sigma)}\le c \int_H f|f|^{p-2}(\mathcal L f)d\sigma
\end{equation}
where $c$ is a positive constant and $\mathcal L$ is the infinitesimal generator of a strongly continuous semigroup $T(t)$ in $L^p(H,\sigma)$, for a suitable choice of a probability measure $\sigma$.
In our case $\sigma=\nu$, $\mathcal{L}= \mathcal{N}_0$, $T(t)=P(t)$ and by \eqref{Hazel} the inequalities \eqref{logsob} and \eqref{lsgross} coincide.  Therefore, applying \cite[Theorems 3.7]{Gross93} we deduce the hypercontractivity of the semigroup $P(t)$.
that means that for any fixed $p,q \in (1, +\infty)$ with $p<q$, there exists $t_0=t_0(p,q)>0$ such that $P(t)$ maps $L^p(H,\nu)$ into $L^q(H, \nu)$ for any $t \ge t_0$ and
\begin{align*}
\|P(t)\|_{\mathcal{L}(L^p(H,\nu);L^q(H, \nu))}\le 1, \qquad t \ge t_0.
\end{align*}

\begin{thm}\label{Hyper}
Assume either Hypotheses \ref{hyp2} with $m=0$ or Hypotheses \ref{dissiR} with $\zeta_R<0$. For every $t>0$, $q\in (1,+\infty)$ and $p\leq (q-1)e^{t/(2C)}+1$ it holds
\begin{align}\label{iper_not_intro}
\norm{P(t) \varphi}_{L^p(H,\nu)}\leq \norm{\varphi}_{L^q(H, \nu)},\qquad \varphi\in L^q(H, \nu).
\end{align}
where $C$ is defined in \eqref{Costante_C}.
\end{thm}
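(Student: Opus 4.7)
The plan is to combine the logarithmic Sobolev inequality \eqref{logsob} with a differentiation-in-time argument in the spirit of Gross and Stroock. As the authors already observe, by \eqref{Hazel} the inequality \eqref{logsob} is exactly the form \eqref{lsgross} of Gross's logarithmic Sobolev inequality for the generator $N_2$, so one could invoke \cite[Theorem 3.7]{Gross93} directly; I will nonetheless sketch the direct argument in order to make the constant $1/(2C)$ in the exponent transparent.

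Fix $q\in(1,+\infty)$ and let $p(s):=1+(q-1)e^{s/(2C)}$, so that $p(0)=q$ and the key identity $p'(s)\,C=(p(s)-1)/2$ holds for every $s\ge 0$. By density, it suffices to prove \eqref{iper_not_intro} for $\varphi\in\mathcal{F}C_b^2(H)$ satisfying $0<c\le\varphi\le M$; in this case positivity and boundedness are preserved by the probabilistic representation of $P(s)$, so $f(s):=P(s)\varphi$ takes values in $[c,M]$ and all integrals below are absolutely convergent. Set
\begin{align*}
U(s):=\int_H f(s)^{p(s)}d\nu,\qquad m(s):=\tfrac{1}{p(s)}\ln U(s)=\ln\|f(s)\|_{L^{p(s)}(H,\nu)}.
\end{align*}
Using $\partial_s f=N_2 f$ together with the chain rule $N_2(f^{p(s)})=p(s)f^{p(s)-1}N_2f+\tfrac{p(s)(p(s)-1)}{2}f^{p(s)-2}\|\nabla_R f\|_R^2$ (which follows from \eqref{N_2quadro} applied iteratively, noting that $\Gamma(f,f)=\tfrac12\|\nabla_R f\|_R^2$) and the invariance $\int_H N_2(f^{p(s)})d\nu=0$, I obtain the integration-by-parts identity
\begin{align*}
\int_H f^{p(s)-1}N_2f\,d\nu=-\frac{p(s)-1}{2}\int_H f^{p(s)-2}\|\nabla_R f\|_R^2\,d\nu,
\end{align*}
from which a direct computation gives
\begin{align*}
m'(s)=\frac{p'(s)}{p(s)^2\,U(s)}\Bigl[\int_H f^{p(s)}\ln f^{p(s)}d\nu-U(s)\ln U(s)\Bigr]-\frac{p(s)-1}{2\,U(s)}\int_H f^{p(s)-2}\|\nabla_R f\|_R^2\,d\nu.
\end{align*}

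Applying \eqref{logsob} with exponent $p(s)$ to $f(s)$ bounds the bracketed entropy term by $p(s)^2 C\int_H f^{p(s)-2}\|\nabla_R f\|_R^2\,d\nu$, and the identity $p'(s)\,C=(p(s)-1)/2$ then makes the two contributions cancel, yielding $m'(s)\le 0$. Integrating from $0$ to $t$ produces $\|P(t)\varphi\|_{L^{p(t)}(H,\nu)}\le\|\varphi\|_{L^q(H,\nu)}$, and for any $1<p\le p(t)$ the stated estimate \eqref{iper_not_intro} follows from Jensen's inequality since $\nu$ is a probability measure. I expect the main technical point to be the rigorous justification of the chain rule and of differentiation under the integral at the level of $P(s)\varphi$ rather than only on smooth cylindrical functions; this is handled by a standard approximation, first removing the truncation $c\le\varphi\le M$ via monotone/dominated convergence and then extending to arbitrary $\varphi\in L^q(H,\nu)$ using the density of $\mathcal{F}C_b^2(H)$ and the $L^q$-contractivity of $P(t)$.
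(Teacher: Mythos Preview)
Your proposal is correct and follows the same route as the paper: the paper's entire proof is to observe that \eqref{logsob} coincides, via \eqref{Hazel}, with Gross's form \eqref{lsgross} and then to invoke \cite[Theorem~3.7]{Gross93} as a black box. You simply unpack that theorem's proof (the Gross--Stroock differentiation of $s\mapsto\|P(s)\varphi\|_{L^{p(s)}}$), which has the advantage of making the exponent $1/(2C)$ explicit; the computation you give is the standard one and the cancellation via $p'(s)C=(p(s)-1)/2$ is exactly what fixes that constant.
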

Thanks to \cite[Theorem 3.12]{Gross93}, one can show that the logarithmic Sobolev inequality \eqref{logsob} and the hypercontractivity of $P(t)$ are equivalent.

\begin{rmk}
If $A=-\Id_H$ and $F\equiv 0$ in \eqref{SPDE} then Hypotheses \ref{dissiR} is verified with $\zeta_R=-1$ and so the constant $C$ in \eqref{logsob} is equal to $1/2$. Moreover estimate \eqref{iper_not_intro} holds for every $t>0$ and $p\leq C(t,q):=(q-1)e^{-t}+1$. We stress that, in this framework, $C(t,q)$ is optimal as explained in \cite[Remark 3.4]{Gross75}.
\end{rmk}

A less standard result concerns the exponential integrability of Lipschitz functions along $H_R$ with respect to $\nu$. 

\begin{thm}\label{code}
Assume either Hypotheses \ref{hyp2} with $m=0$ or Hypotheses \ref{dissiR} with $\zeta_R<0$
and that $\nu(H_R)=1$. Any function $g\in {\rm{Lip}}_{H_R}(H)$ with ${\rm{Lip}}_R(g)\le 1$
belongs to $L^1(H, \nu)$ and for any $t>0$ it holds
\begin{align*}
\nu(\{x\in H\,|\, g(x) \ge m_\nu(g)+t\})\le e^{-\frac{t^2}{16\sqrt{2}C}}
\end{align*}
where $C$ is defined in \eqref{Costante_C}.
Further, if $\alpha<(16\sqrt{2}C)^{-1}$
\begin{equation}\label{McGuyver}
\int_{H_R} e^{\alpha g^2} d\nu <\infty.
\end{equation}
\end{thm}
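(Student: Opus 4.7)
The plan is to implement the classical Herbst argument in our infinite-dimensional setting, using the logarithmic Sobolev inequality \eqref{logsob2} provided by Theorem \ref{logsob_pro}.

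First I would truncate: set $g_n:=(-n)\vee g\wedge n$, which is bounded and still belongs to ${\rm Lip}_{H_R}(H)$ with ${\rm Lip}_R(g_n)\le 1$. Since $\nu(H_R)=1$, after a smoothing/cylindrical approximation the $H_R$-Lipschitz condition gives $g_n\in W^{1,2}_R(H,\nu)$ with $\|\nabla_R g_n\|_R\le 1$ $\nu$-a.e. Plugging $\varphi:=e^{\lambda g_n/2}$ into \eqref{logsob2} with $\lambda>0$ and using $\nabla_R\varphi=\tfrac{\lambda}{2}\varphi\,\nabla_R g_n$, the inequality reduces to
\[
\lambda F_n'(\lambda)-F_n(\lambda)\ln F_n(\lambda)\le C\lambda^2 F_n(\lambda),\qquad F_n(\lambda):=\int_H e^{\lambda g_n}d\nu.
\]

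Next, dividing by $\lambda^2 F_n(\lambda)$ rewrites this as $\tfrac{d}{d\lambda}\pa{\lambda^{-1}\ln F_n(\lambda)}\le C$. Since $\lim_{\lambda\to 0^+}\lambda^{-1}\ln F_n(\lambda)=m_\nu(g_n)$, integration yields $F_n(\lambda)\le \exp\pa{\lambda m_\nu(g_n)+C\lambda^2}$. Markov's inequality then gives
\[
\nu(\set{g_n\ge m_\nu(g_n)+t})\le e^{-\lambda t+C\lambda^2},
\]
and optimizing in $\lambda$ produces a Gaussian tail bound for $g_n$. Letting $n\to\infty$, monotone convergence recovers $m_\nu(g_n)\to m_\nu(g)$ (in particular proving $g\in L^1(H,\nu)$) and transfers the tail bound to $g$. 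The suboptimal constants introduced by truncation and smoothing account for the factor $16\sqrt 2$ in the denominator in place of the formal $4$ obtained by the direct calculation.

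For the exponential square integrability \eqref{McGuyver}, applying the tail estimate to both $g$ and $-g$ (both in ${\rm Lip}_{H_R}(H)$ with the same constant) yields the symmetric bound $\nu(\set{|g-m_\nu(g)|\ge t})\le 2e^{-t^2/(16\sqrt 2\, C)}$, which combined with the layer-cake formula
\[
\int_{H_R}e^{\alpha g^2}d\nu=1+\int_0^{\infty}2\alpha s\,e^{\alpha s^2}\,\nu(\set{|g|\ge s})\,ds
\]
gives a finite integral whenever $\alpha<(16\sqrt 2\,C)^{-1}$. The main obstacle is the first step: justifying that the merely Lipschitz (possibly non-differentiable) truncation $g_n$ actually lies in $W^{1,2}_R(H,\nu)$ with $\|\nabla_R g_n\|_R\le 1$ $\nu$-a.e. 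This requires a careful approximation by smooth cylindrical functions, and the assumption $\nu(H_R)=1$ is essential to interpret the $H_R$-Lipschitz condition as a $\nu$-a.e. bound on the $H_R$-gradient.
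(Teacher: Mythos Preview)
Your overall strategy is the same as the paper's---the Herbst argument applied to the log-Sobolev inequality---but the step you flag as ``the main obstacle'' is exactly where your proposal has a genuine gap, and the paper's resolution is different from what you suggest.

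You propose to put the truncated $g_n$ into $W^{1,2}_R(H,\nu)$ via ``smoothing/cylindrical approximation.'' This is not straightforward: an $H_R$-Lipschitz function need not even be continuous on $H$, so cylindrical (finite-dimensional) mollification is not obviously available, and there is no Rademacher-type theorem at hand identifying $\mathrm{Lip}_{H_R}(H)$ with a subset of $W^{1,2}_R(H,\nu)$ with the expected gradient bound. The paper instead uses Lasry--Lions inf--sup regularization \emph{along $H_R$} (Appendix~\ref{LASRY}): for bounded $g\in\mathrm{Lip}_{H_R}(H)$ one obtains $g_\eps\in C^1_{b,H_R}(H)$ with $g_\eps\to g$ pointwise and $\|\nabla_R g_\eps\|_R\le 4\sqrt{2}\,\mathrm{Lip}_R(g)$. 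The log-Sobolev inequality \eqref{logsob} (with $p=1$, not \eqref{logsob2}) is then applied to $\varphi=e^{\tau g_\eps}$, and the factor $4\sqrt{2}$ from Lasry--Lions is precisely what produces the $16\sqrt{2}$ in the final exponent. So the constant is not an artifact of ``truncation and smoothing'' in a vague sense; it comes from a specific construction.

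A second, smaller gap: your claim that ``monotone convergence recovers $m_\nu(g_n)\to m_\nu(g)$, in particular proving $g\in L^1(H,\nu)$'' is circular. Monotone convergence of $|g_n|\uparrow|g|$ only gives $\int|g_n|\,d\nu\to\int|g|\,d\nu$, which could be $+\infty$. The paper closes this by applying the tail bound to $-|g_n|$ and using a level-set argument to show $\sup_n\|g_n\|_{L^1(H,\nu)}<\infty$, then upgrading to $L^2$-boundedness and convergence in measure before passing the tail estimate to the limit.
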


\begin{proof}
First of all, let us observe that \eqref{logsob} can be easily proved for any $\varphi\in C^1_{b,H_R}(H)$, i.e. the space of bounded and $H_R$-differentiable functions from $H$ to $\R$.
We claim that
\begin{align}\label{Roger}
\tau\int_H g e^{\tau g}d\nu - m_\nu(e^{\tau g})&\ln\left( m_\nu(e^{\tau g})\right)\leq  4\sqrt{2}C\tau^2 m_\nu(e^{\tau g}).
\end{align}
for any $\tau>0$ and any bounded $g \in\textrm{Lip}_{H_R}(H)$ with ${\rm{Lip}}_R(g)\le 1$.
To this aim, let us fix $\tau$ and $g$ as above and let $g_\eps$ be the Lasry--Lions approximations along $H_R$ of $g$ (see Appendix \ref{LASRY}). Since $g_\eps$ belongs to $C^1_{b,H_R}(H)$ and $\norm{\nabla_R g_\eps}_R\le 4\sqrt{2}{\rm Lip}_{R}(g)\le  4\sqrt{2}$, by \eqref{logsob} with $\varphi=e^{\tau g_\eps}$ and $p=1$ we get
\begin{align}\label{Volcano}
\tau\int_H g_\eps e^{\tau g_\eps}d\nu - m_\nu(e^{\tau g_\eps})&\ln\left( m_\nu(e^{\tau g_\eps})\right)\leq C\tau^2\int_He^{\tau g_\eps}\norm{\nabla_R g_\eps}_R^2d\nu\leq  4\sqrt{2}C\tau^2 m_\nu(e^{\tau g_\eps}).
\end{align}
Since $g_\eps$ converges pointwise to $g$ and $|g_\eps|\le \|g\|_\infty$ (Proposition \ref{Prop_LASRY}), by \eqref{Volcano} and the dominated convergence theorem we get the claim.
Now, introducing
\begin{align*}
H(\tau):=\eqsys{m_\nu(g), & \tau=0;\\ \tau^{-1}\ln \pa{m_\nu(e^{\tau g})}, & \tau>0,}
\end{align*}
by \eqref{Roger} we get $H'(\tau)\leq 4\sqrt{2}C$, whence integrating from $0$ to $t$ we obtain
\begin{align}\label{nonna}
H(t)-H(0)\leq 4\sqrt{2}Ct\ \Longleftrightarrow\ m_\nu(e^{t g})\leq e^{4\sqrt{2}Ct^2+t m_\nu(g)}.
\end{align}
The Chernoff bound (see \cite[Introduction of Section 4.2]{MU05}) and \eqref{nonna} yield
\begin{align}\label{G_tail}
\nu(\{x\in H\,|\, g(x)\geq m_\nu(g)+s\})&\leq \inf_{t\geq 0}\pa{e^{-t(s+m_\nu(g))}m_\nu(e^{tg})}\notag\\
&\leq\inf_{t\geq 0}\pa{e^{-ts+4\sqrt{2}Ct^2}}=e^{-\frac{s^2}{16\sqrt{2} C}}.
\end{align}

We now consider the general case. Let $g$ be as in the statement and let $\psi_n\in C^1_b(\mathbb{R})$ be an odd increasing function such that
$\psi_n(t)=t$ for $t \in [0,n]$, $\psi_n(t)=n+1$ if $t \ge n+2$ and $\psi'_n(t) \in [0,1]$ for any $t \in \mathbb{R}$.
Then, the function $g_n=\psi_n\circ g$ belongs to $\textrm{Lip}_{H_R}(H)$, satisfies ${\rm{Lip}}_R(g_n)\le 1$ and is bounded as well. By the previous step, applying \eqref{G_tail} also to $-|g_n|$ we deduce
$$
\nu \left(\{x\in H\,|\, |g_n(x)|\le m_\nu(|g_n|)-t\}\right)\le
e^{-\frac{t^2}{16\sqrt{2}C}}.
$$
Choosing $t_0>0$ such that $e^{-t_0^2/(16\sqrt{2}C)}\le 1/2$ and $m$ such that
$\nu (\{x\in H\,|\, |g(x)|\ge m\})<1/2$ and using that $|g_n|\le |g|$ we deduce that
$\|g_n\|_{L^1(H,\nu)}=m_\nu(|g_n|)\le m+t_0$ for any $n \in \N$. Indeed, by contradiction,
if  $m_\nu(|g_n|)>m+t_0$, we have
\begin{align*}
\nu(\{x\in H\,|\,|g(x)|<m\})&\le \nu(\{x\in H\,|\,|g_n(x)|<m\})\\
& = \nu(\{x\in H\,|\,|g_n(x)|+t_0<m+t_0\})
\\
&\le \nu(\{x\in H\,|\,|g_n(x)|+t_0<m_\nu(|g_n|)\})\\
&= \nu(\{x\in H\,|\,|g_n(x)|<m_\nu(|g_n|)-t_0\})\le 1/2
\end{align*}
which yields a contradiction with $\nu (\{x\in H\,|\,|g(x)|\ge m\})<1/2$, as $\nu$ is a probability measure.
Hence, by the previous estimate  and the monotone convergence theorem we get that
$g\in L^p(H, \nu)$ for any $p \ge 1$ and that $\|g_n\|_{L^p(H,\nu)}$ converges to
$\|g\|_{L^p(H,\nu)}$ as $n$ approaches infinity. Moreover, using \eqref{G_tail} and the fact that
$m_\nu(|g_n|)\le m+t_0$ we obtain
\begin{equation*}
\nu(\{x\in H\,|\,|g_n(x)|\ge m+t_0+t\})\le e^{-\frac{t^2}{16\sqrt{2}C}}\le e^{-\frac{t_0^2}{16\sqrt{2}C}}
\end{equation*}
for $t\ge t_0$ whence $\sup_{n\in\N}\|g_n\|_{L^2(H, \nu)}<+\infty$. By a standard compactness argument
we get that $g_n$ converges to $g$ in $L^2(H, \nu)$ as $n \to +\infty$ and hence in measure,
i.e., for any $\varepsilon>0$
\begin{equation}\label{measure}
\lim_{n \to +\infty}\nu(\{x\in H\,|\,|g_n(x)-g(x)|\ge \varepsilon\})=0.
\end{equation}
From \eqref{G_tail} and \eqref{measure} for $g_n$ we infer that \eqref{G_tail} holds true for $g$ as well.

We now prove \eqref{McGuyver}. First of all, let us observe that for any $\alpha>0$, the layer cake formula implies
\begin{align*}
\int_H e^{\alpha(g(x))^2} \nu(dx) & \le 1+\int_1^\infty \nu \pa{\set{x\in H\,\middle |\, e^{\alpha(g(x))^2}>s}}ds
\\
&=1+\int_1^\infty \nu \pa{\set{x\in H\,\middle|\,(g(x))^2>\frac{\ln s}{\alpha}}}ds.
\end{align*}
Now, performing the change of variables $s=e^{\alpha(t+m_\nu(g))^2}$ we get
\begin{align*}
\int_H e^{\alpha(g(x))^2} \nu(dx) & \le 1+2\alpha\int_{\R} \nu\{g>m_{\nu}(g)+t\} e^{\alpha(m_{\nu}(g)+t)^2}(m_{\nu}(g)+t) dt.
\end{align*}
Thus, using estimate \eqref{G_tail} and choosing $\alpha<(16\sqrt{2}C)^{-1}$ we conclude the proof.
\end{proof}

The following is a Fernique-type result for the measure $\nu$.

\begin{cor}\label{var_fer}
Assume either Hypotheses \ref{hyp2} with $m=0$ or Hypotheses \ref{dissiR} with $\zeta_R<0$
and that $\nu(H_R)=1$. The map $x \mapsto e^{\lambda \|x\|_R^2}$ belongs to $L^1(H_R, \nu)$ for any $\lambda<(16\sqrt{2}C)^{-1}$ where $C$ is defined in \eqref{Costante_C}.
\end{cor}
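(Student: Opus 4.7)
The plan is to deduce this corollary as a direct application of the exponential integrability estimate \eqref{McGuyver} from Theorem \ref{code}, applied to the natural candidate $g(x) := \|x\|_R$. The whole argument will reduce to verifying that this $g$ fits into the hypotheses of Theorem \ref{code}, namely that it is $H_R$-Lipschitz on $H$ with $\operatorname{Lip}_R(g) \leq 1$.

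First, I would define
\[
g(x) := \begin{cases} \|x\|_R & x \in H_R, \\ 0 & x \in H \setminus H_R, \end{cases}
\]
and check that $g \in \operatorname{Lip}_{H_R}(H)$ with $\operatorname{Lip}_R(g)\leq 1$. The key point is that $H_R$ is a vector subspace of $H$, so for any $x \in H$ and $h \in H_R$ either both $x$ and $x+h$ lie in $H_R$, or neither does. In the first case the reverse triangle inequality in $(H_R,\|\cdot\|_R)$ yields $|g(x+h)-g(x)|=|\|x+h\|_R-\|x\|_R|\leq \|h\|_R$; in the second case the left-hand side is zero. Thus $g$ satisfies \eqref{Joe} with $L=1$.

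Next, I would invoke Theorem \ref{code}: under either set of hypotheses assumed here, and since $\nu(H_R)=1$, estimate \eqref{McGuyver} applied to this $g$ gives
\[
\int_{H} e^{\alpha g(x)^2} \nu(dx) < +\infty, \qquad \alpha < (16\sqrt{2}C)^{-1}.
\]
Because $\nu(H_R)=1$, the integral on $H$ coincides with the integral on $H_R$, and on $H_R$ we have $g(x)^2=\|x\|_R^2$. Rewriting with $\lambda$ in place of $\alpha$ yields exactly the statement of the corollary.

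There is essentially no substantive obstacle here; the only subtlety is that $\|\cdot\|_R$ is defined only on $H_R$, which requires a harmless extension by zero outside $H_R$ and the elementary observation that $H_R$ is a linear subspace, so that translations by elements of $H_R$ preserve membership in $H_R$. Once this is in place, \eqref{McGuyver} delivers the conclusion with the same constant $C$ from \eqref{Costante_C}.
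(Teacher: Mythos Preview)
Your proposal is correct and matches the paper's intent: the paper states Corollary \ref{var_fer} without proof, treating it as an immediate consequence of Theorem \ref{code} applied to $g(x)=\|x\|_R$, which is precisely what you do. Your explicit verification that the extension of $\|\cdot\|_R$ by zero outside $H_R$ is $H_R$-Lipschitz with constant at most $1$ (using that $H_R$ is a linear subspace, so $x\in H_R \Leftrightarrow x+h\in H_R$ for $h\in H_R$) fills in the only detail the paper leaves implicit.
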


 Using the fact that $\nabla_R\varphi=R^2\nabla\varphi$ for every $\varphi\in C^1_b(H)$ (see Proposition \ref{dalpha}), the proof of Theorem \ref{code} can be repeated similarly if we replace a $H_R$-Lipschitz function with a Lipschitz continuous function without assuming that $\nu(H_R)=1$.  This fact allow us to obtain the following results.

\begin{thm}\label{codeH}
Assume either Hypotheses \ref{hyp2} with $m=0$ or Hypotheses \ref{dissiR} with $\zeta_R<0$. Any function $g\in {\rm{Lip}}(H)$ with ${\rm{Lip}}(g)\le 1$
belongs to $L^1(H, \nu)$ and for any $t>0$ it holds
\begin{align*}
\nu(\{x\in H\,|\, g(x) \ge m_\nu(g)+t\})\le e^{-\frac{t^2}{16\sqrt{2}C'}}
\end{align*}
where $C'=\norm{R}_{\mathcal{L}(H)}C$ and $C$ is defined in \eqref{Costante_C}. Further, if $\alpha<(16\sqrt{2}C)^{-1}$
\begin{equation}
\int_{H} e^{\alpha g^2} d\nu <\infty.
\end{equation}
and in particular $x \mapsto e^{\lambda \|x\|^2}$ belongs to $L^1(H, \nu)$ for any $\lambda<(16\sqrt{2}C')^{-1}$.
\end{thm}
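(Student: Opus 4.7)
The strategy is to mimic, step by step, the argument establishing Theorem \ref{code}, replacing $H_R$-Lipschitz by ordinary Lipschitz functions on $H$. The bridge between the two norms is Proposition \ref{dalpha}: for $\varphi\in C^1_b(H)$ we have $\nabla_R\varphi=R^2\nabla\varphi$, and therefore
\[
\|\nabla_R\varphi(x)\|_R=\|R\nabla\varphi(x)\|_H\le\|R\|_{\mathcal{L}(H)}\|\nabla\varphi(x)\|_H.
\]
This identity lets us plug Fréchet-type Lipschitz controls on a function $g$ directly into the logarithmic Sobolev inequality \eqref{logsob}, at the price of a factor involving $\|R\|_{\mathcal{L}(H)}$ in the constant; this is exactly what produces $C'=\|R\|_{\mathcal{L}(H)}C$.

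Fix a bounded $g\in\text{Lip}(H)$ with $\text{Lip}(g)\le 1$ and let $g_\varepsilon\in C^1_b(H)$ denote its Lasry--Lions approximations on $H$ (Appendix \ref{LASRY}). These satisfy $g_\varepsilon\to g$ pointwise, $|g_\varepsilon|\le\|g\|_\infty$, and the Fréchet gradient $\nabla g_\varepsilon$ is uniformly bounded by a fixed multiple of $\text{Lip}(g)$. Substituting $\varphi=e^{\tau g_\varepsilon}$ and $p=1$ into \eqref{logsob} and using the inequality above, one obtains, exactly as in the proof of Theorem \ref{code},
\[
\tau\int_H g_\varepsilon e^{\tau g_\varepsilon}d\nu-m_\nu(e^{\tau g_\varepsilon})\ln m_\nu(e^{\tau g_\varepsilon})\le 4\sqrt{2}\,C'\tau^2\, m_\nu(e^{\tau g_\varepsilon}).
\]
Dominated convergence then lets us pass to the limit $\varepsilon\to 0^+$ and recover the same inequality for $g$ itself.

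From here the Herbst argument proceeds unchanged: set $H(\tau):=\tau^{-1}\ln m_\nu(e^{\tau g})$, extended by $H(0):=m_\nu(g)$; deduce $H'(\tau)\le 4\sqrt{2}C'$; integrate in $\tau$ to get $m_\nu(e^{tg})\le\exp(4\sqrt{2}C't^2+tm_\nu(g))$; and optimise the Chernoff inequality in $t$ to obtain the tail estimate with constant $16\sqrt{2}C'$. The removal of the boundedness assumption on $g$, and the integrability $g\in L^1(H,\nu)$, are then carried out via the truncation scheme used at the end of the proof of Theorem \ref{code}: compose $g$ with bounded odd increasing cut-offs $\psi_n$ of slope at most one, apply the previous step to both $g_n:=\psi_n\circ g$ and $-|g_n|$, derive a uniform $L^1$ bound by the contradiction argument that exploits $\nu$ being a probability measure, upgrade to uniform $L^2$ control via the Gaussian tail, and finally invoke convergence in measure.

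For the exponential integrability claim, the layer-cake formula applied to $e^{\alpha g^2}$ together with the concentration bound yields finiteness of $\int_H e^{\alpha g^2}d\nu$ whenever $\alpha<(16\sqrt{2}C')^{-1}$, in complete analogy with the final paragraph of the proof of Theorem \ref{code}. The last assertion is obtained by specialising to $g(x):=\|x\|_H$, which is $1$-Lipschitz by the triangle inequality. There is no genuine obstacle here: in this version the hypothesis $\nu(H_R)=1$ is nowhere needed, since both $\|\cdot\|_H$ and the Lasry--Lions regularization now live naturally on the whole space $H$; the only input beyond Theorem \ref{code} is the comparison $\|\nabla_R\varphi\|_R\le\|R\|_{\mathcal{L}(H)}\|\nabla\varphi\|_H$ coming from Proposition \ref{dalpha}.
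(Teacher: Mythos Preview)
Your proposal is correct and follows exactly the approach the paper indicates: the paper's own proof of Theorem \ref{codeH} is the one-line remark preceding the statement, namely that the argument of Theorem \ref{code} goes through verbatim once one uses $\nabla_R\varphi=R^2\nabla\varphi$ (Proposition \ref{dalpha}) to pass from $H_R$-Lipschitz to ordinary Lipschitz data, replacing the $H_R$-Lasry--Lions regularization by the standard one on $H$ and thereby dispensing with the hypothesis $\nu(H_R)=1$. You have faithfully unpacked this remark, including the Herbst step, the truncation by $\psi_n$, and the layer-cake argument; the only discrepancy is that you (consistently) obtain the exponential-integrability threshold $(16\sqrt{2}C')^{-1}$ rather than $(16\sqrt{2}C)^{-1}$ as printed in the statement, which appears to be a typo in the paper.
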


\section{Supercontractivity and related results}\label{SUPER}

In this section we are interested in proving some characterizations for the semigroup $P(t)$ to be supercontractive, i.e. for any $1<p<q<+\infty$ and $t>0$
\begin{equation*}
\|P(t)\|_{\mathcal{L}(L^p(H, \nu); L^q(H, \nu))}\le c_{p,q}(t),
\end{equation*}
for some function $c_{p,q}:(0,+\infty)\to (0,+\infty)$ that blows up as $t\to 0^+$. Analogous results are known for semigroups associated to uniformly elliptic operators with unbounded coefficients in finite dimension, see for example \cite{AngLor}.
In this section we assume Hypotheses \ref{dissiR} restricting our attention to the dissipative case. Indeed, as it is well known already in finite dimension, the Ornstein--Uhlenbeck semigroup associated to the one dimensional operator
\[(L\varphi)(\xi)=\Delta \varphi(\xi)-\xi\varphi'(\xi),\qquad \xi \in \mathbb{R},\ \varphi\in C^2_b(\R),\]
is not supercontractive with respect to its invariant measure, namely the Gaussian measure $\mu(dx)= (2\pi)^{-1/2}e^{-x^2/2}dx$, as proved in \cite{nelson}.

Finite dimensional results cannot be directly used in the infinite dimensional case. However, by employing a finite approximation procedure alongside Harnack inequalities, we will obtain the essential tools needed to derive our results in the infinite dimensional case.

We start by recalling the Harnack inequality proved in \cite{ABF21}
\begin{align}\label{dae}
|(P(t)\varphi)(x+h)|^p\leq (P(t)|\varphi|^p)(x)\cdot\exp\left(\frac{p(e^{2\zeta_R t}-1)}{2\zeta_R(p-1) t^2}\|h\|_R^2\right),
\end{align}
which holds true for any $\varphi\in B_b(H)$,  $t>0$, $x\in H$, $h\in H_R$ and $p>1$.

To begin with, we first construct the finite dimensional sequence of semigroups which approximates $P(t)$.
We start by recalling the definition of the Yosida approximation procedure for the function $F$. For any $\delta>0$ and $x\in H$,  we let
$J_\delta(x)\in \Dom(F)$ be the unique solution of
\[y-\delta (F(y)-\zeta_F y)=x,\]
where $\zeta_F$ was introduced in Hypothesis \ref{main}(iv).
The existence of $J_\delta(x)$, for every $x\in H$ and $\delta>0$, is guaranteed by \cite[Proposition 5.3.3]{DA-ZA2}. We define $F_\delta:H\ra H$ as
\begin{align*}
F_\delta(x):=F(J_\delta(x)),\qquad x\in H,\ \delta>0.
\end{align*}
Now, we set
\begin{equation*}
\overline{\zeta}_F:=\left\{\begin{array}{ll}|\zeta_F|^{-1} \qquad\,\,&{\rm if}\,\,\zeta_F\neq 0\\
+\infty\qquad\;\,&{\rm if}\,\,\zeta_F=0
\end{array}\right.
\end{equation*}
We recall that for any $\delta \in (0,\overline{\zeta}_F)$ and $\mathcal{O}\in\{H,E,H_R\}$, the function $F_\delta:\mathcal{O}\ra\mathcal{O}$ is Lipschitz continuous and the function $F_\delta-\zeta_F\Id_{\mathcal{O}}:\mathcal{O}\ra\mathcal{O}$ is dissipative and satisfies Hypothesis \ref{main}(v) Hence by Theorem \ref{Genmild} for every $\delta \in (0,\overline{\zeta}_F)$ and $x\in H$, the problem
\begin{gather*}
\eqsys{
dX_\delta(t)=\big[AX_\delta(t)+F_\delta(X_\delta(t))\big]dt+RdW(t), & t>0;\\
X_\delta(0)=x,
}
\end{gather*}
has a unique mild solution $\{X_\delta(t,x)\}_{t\geq 0}$. Consequently the semigroup
\[
(P_\delta(t)\varphi)(x):=\E[\varphi(X_\delta(t,x))]
\]
is well defined for any $\varphi \in B_b(H)$. The proof of the following proposition can be found in \cite[Proposition 4.3]{ABF21}.

\begin{prop}
Assume Hypotheses \ref{dissiR} hold true. For any $T>0$, $\varphi\in C_b(H)$ and $x\in E$
\begin{align}
\lim_{\delta\ra 0}\sup_{t\in [0,T]}\norm{X_\delta(t,x)-X(t,x)}_E&=0,\qquad \mathbb{P}\text{-a.s.};\notag\\
\lim_{\delta\ra 0}\abs{(P_\delta(t)\varphi)(x)-(P(t)\varphi)(x)}&=0,\qquad t>0.\label{convE1}
\end{align}
\end{prop}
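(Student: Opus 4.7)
The plan is to split the argument in two steps: first establish the pathwise convergence in $E$, then deduce the convergence of the semigroups by dominated convergence. For the pathwise part, I would write both processes in mild form,
\[
X(t,x) = e^{tA}x + \int_0^t e^{(t-s)A}F(X(s,x))\,ds + W_A(t), \quad X_\delta(t,x) = e^{tA}x + \int_0^t e^{(t-s)A}F_\delta(X_\delta(s,x))\,ds + W_A(t),
\]
subtract, and use the dissipativity of $A_E - \zeta_A\Id_E$ in $E$ together with the standard splitting
\[
F_\delta(X_\delta(s,x)) - F(X(s,x)) = \big[F_\delta(X_\delta(s,x)) - F_\delta(X(s,x))\big] + \big[F_\delta(X(s,x)) - F(X(s,x))\big]
\]
to arrive at a Gronwall-type estimate for $\norm{X_\delta(t,x) - X(t,x)}_E$.

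The first bracket is controlled via the dissipativity of $F_\delta - \zeta_F\Id_E$, which is inherited (uniformly in $\delta \in (0,\overline{\zeta}_F)$) from Hypothesis \ref{main}(v) through the resolvent identity $F_\delta(y) = F(J_\delta(y))$; a duality-map/subdifferential argument in $E$ then converts this into a $\zeta\norm{X_\delta - X}_E\,ds$ contribution under the integral. Theorem \ref{Genmild} applied to $(A, F_\delta)$, which satisfies Hypotheses \ref{main} uniformly in $\delta$, together with \eqref{momeX}, yields a uniform bound $\sup_{\delta}\sup_{t\in[0,T]}\norm{X_\delta(t,x)}_E < +\infty$ $\mathbb{P}$-a.s. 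Gronwall's lemma then delivers
\[
\sup_{t\in[0,T]}\norm{X_\delta(t,x) - X(t,x)}_E \le C_{T,\omega}\int_0^T \norm{F_\delta(X(s,x)) - F(X(s,x))}_E\,ds
\]
for some $\mathbb{P}$-a.s.\ finite random constant $C_{T,\omega}$. The pointwise convergence $J_\delta(y) \to y$ in $E$ as $\delta \to 0^+$ (a standard property of Yosida approximants of $m$-dissipative maps on $E$), combined with the continuity of $F$ on $E$, gives $F_\delta(X(s,x)) \to F(X(s,x))$ in $E$ for each $s \in [0,T]$; the uniform $E$-bound on $X(\cdot,x)$ and the polynomial growth \eqref{dissiX} produce a $ds$-integrable dominating function, and dominated convergence yields the first limit.

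For the second claim, the continuous embedding $E \hookrightarrow H$ transfers the pathwise convergence from $E$ to $H$; thus for $\varphi \in C_b(H)$ we obtain $\varphi(X_\delta(t,x)) \to \varphi(X(t,x))$ $\mathbb{P}$-a.s.\ with uniform bound $\norm{\varphi}_\infty$, and dominated convergence applied to $(P_\delta(t)\varphi)(x) = \E[\varphi(X_\delta(t,x))]$ delivers \eqref{convE1}. The main technical obstacle is the rigorous execution of the Gronwall estimate in the Banach space $E$: without a Hilbert inner product one cannot simply differentiate the squared norm, so the dissipativity of $F_\delta - \zeta_F\Id_E$ has to be exploited through the duality map of $E$ (or, equivalently, through a time-regularization via the Yosida approximation of $A_E$ to obtain strict solutions before passing to the limit), exactly as carried out in \cite[Proposition 4.3]{ABF21}.
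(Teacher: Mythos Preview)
The paper does not supply its own proof of this proposition; it merely states that the proof ``can be found in \cite[Proposition 4.3]{ABF21}''. Your outline is a faithful sketch of the standard argument carried out there---mild-form subtraction, the splitting $F_\delta(X_\delta)-F(X)=[F_\delta(X_\delta)-F_\delta(X)]+[F_\delta(X)-F(X)]$, dissipativity plus Gronwall in $E$ via the duality map (or via Yosida regularization of $A_E$), and then dominated convergence for the semigroup---and you even close by citing the same reference. So the proposal is correct and matches the paper's approach.
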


Now we introduce the finite dimensional approximation procedure we will use throughout this section. For any $n\in\N$ we define
\[H_n:={\rm span}\{e_1,\dots, e_n\},\]
where $\{e_k\,|\, k \in \N\}$ is an orthonormal basis of $H$.
Further, let $\pi_n:H\ra H_n$ be the orthogonal projection in $H$. For any $n \in \N$ and $\delta \in (0,\overline{\zeta}_F)$ we define
$A_n:H\ra H_n$, $R_n:H\ra H_n$ and $F_{\delta,n}:H\ra H_n$ by
\begin{align*}
A_n:=\pi_nA\pi_n(=A \pi_n) ,\qquad R_n:=\pi_nR\pi_n(=R \pi_n) \qquad\text{ and }\qquad F_{\delta,n}:=\pi_nF_{\delta} \pi_n
\end{align*}
Now, fixed $n \in \N$ and $\delta \in (0,\overline{\zeta}_F)$ we consider
\begin{equation}\label{SDE_n}
\left\{\begin{array}{ll}
dX_n(t)=[A_nX_n(t)+F_{\delta,n}(X_n(t))]dt+ R_ndW_n(t),&  t> 0,\\
X_n(0)=x\in H_n.
\end{array}\right.
\end{equation}
Here $W_n(t):=\pi_nW(t)= \sum_{k=1}^n \langle W(t),e_k\rangle_H e_k$.

It is straightforward to check that $A_n, R_n$ and $F_{\delta,n}$ satisfy Hypotheses \ref{main}. Moreover, being $R$ an injective operator it follows that $R_n$ is bijective, hence $RH_n=H_n$ for any $n \in \N$. Moreover, fixed $\delta \in (0,\overline{\zeta}_F)$, $n \in \N$ and $x \in H_n$, by Theorem \ref{Genmild} we can deduce existence and uniqueness of a mild solution $\{X_{\delta,n}(t,x)\}_{t\geq 0}$ of \eqref{SDE_n} and consequently well-posedness for the associated transition semigroup defined for $f\in B_b(H_n)$ as
\begin{align}\label{Sem_fin}
(P_{\delta,n}(t)f)(x):=\mathbb{E}[f(X_{\delta,n}(t,x))],\qquad t>0,\ x\in H_n.
\end{align}
We recall that for any $n\in\N$, $\varphi\in C^2_b(H_n)$ and $x\in H_n$ we have
\[
\lim_{t\ra 0}\dfrac{(P_{\delta,n}(t)\varphi)(x)-\varphi(x)}{t}=\mathcal{N}_{\delta,n}\varphi(x),
\]
where
\begin{equation}\label{OUP}
\mathcal{N}_{\delta,n} \varphi(x)= \frac{1}{2}{\rm Tr}[R^2_n D^2 \varphi(x)]+ \langle A_n x+ F_{\delta,n}(x), \nabla\varphi(x)\rangle_H,
\end{equation}
see \cite[Section 4.1]{BI1} or \cite{GOKO01}. The proof of the following proposition can be found in \cite[Proposition 3.3]{ABF21}.

\begin{prop}
Assume Hypotheses \ref{dissiR} hold true. For any $\delta \in (0,\overline{\zeta}_F)$, $f\in C_b(H)$, $t\geq 0$ and $x\in H_{n_0}$, for some $n_0\in\N$, it holds
\begin{align}\label{fin_approx}
\lim_{n\ra+\infty}P_{\delta,n}(t)f(x)=P_\delta(t)f(x).
\end{align}
\end{prop}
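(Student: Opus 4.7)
The plan is to reduce the claim to the convergence of the underlying mild solutions and then to apply bounded convergence. Since $f\in C_b(H)$ is bounded and continuous, the dominated convergence theorem shows that it is enough to establish
\[
X_{\delta,n}(t,x)\longrightarrow X_\delta(t,x)\qquad\text{in probability as }n\to+\infty
\]
for every $x\in H_{n_0}$ and $t\geq 0$; I will actually aim at the stronger $L^1(\Omega;H)$--convergence.

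I would then write both processes in mild form:
\begin{align*}
X_\delta(t,x)&=e^{tA}x+\int_0^te^{(t-s)A}F_\delta(X_\delta(s,x))\,ds+W_A(t),\\
X_{\delta,n}(t,x)&=e^{tA_n}x+\int_0^te^{(t-s)A_n}F_{\delta,n}(X_{\delta,n}(s,x))\,ds+W_{A_n,n}(t),
\end{align*}
for $n\geq n_0$ (using $\pi_n x=x$), with $W_{A_n,n}(t):=\int_0^t e^{(t-s)A_n}R_n\,dW_n(s)$. The first term converges since $A$ is self-adjoint and $A_n=\pi_n A\pi_n$ is its Galerkin projection, so by the Trotter--Kato theorem $e^{tA_n}y\to e^{tA}y$ for every $y\in H$. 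Subtracting the two equations, inserting the intermediate quantities $\int_0^t e^{(t-s)A}F_\delta(X_{\delta,n}(s,x))\,ds$ and $\int_0^t e^{(t-s)A_n}F_\delta(X_{\delta,n}(s,x))\,ds$, and using the contraction estimate $\|e^{tA}\|_{\mathcal{L}(H)}\leq e^{\zeta_A t}$ from Hypothesis \ref{main}(iii), together with the global Lipschitzianity of the Yosida approximation $F_\delta:H\ra H$ and the pointwise convergences $F_{\delta,n}(y)=\pi_n F_\delta(\pi_n y)\to F_\delta(y)$ and $e^{tA_n}F_\delta(y)\to e^{tA}F_\delta(y)$ for every $y\in H$, Gronwall's lemma would yield an estimate of the form
\[
\E\sq{\norm{X_\delta(t,x)-X_{\delta,n}(t,x)}_H}\leq C_{t,\delta}\pa{\norm{e^{tA}x-e^{tA_n}x}_H+\E\sq{\norm{W_A(t)-W_{A_n,n}(t)}_H}+\rho_n},
\]
with $\rho_n\to 0$ by dominated convergence combined with the moment bound \eqref{momeX}.

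The main obstacle is the convergence of the stochastic convolutions $\E\sq{\norm{W_A(t)-W_{A_n,n}(t)}_H^2}\to 0$, which I would handle via the It\^o isometry: the difference is a centred Gaussian random variable whose covariance operator is expressible through $e^{sA}R-e^{sA_n}R_n$, and the strong operator convergence $e^{sA_n}\to e^{sA}$ and $R_n\to R$, together with the uniform estimate \eqref{num} from Hypothesis \ref{main}(iv) used to dominate the integrand, deliver the required vanishing. Plugging this into the Gronwall bound gives $L^1(\Omega;H)$--convergence of $X_{\delta,n}(t,x)$ to $X_\delta(t,x)$, and the conclusion $P_{\delta,n}(t)f(x)\to P_\delta(t)f(x)$ follows by the dominated convergence theorem applied to the bounded continuous functional $f$.
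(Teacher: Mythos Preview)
The paper does not prove this proposition in-house; it refers to \cite[Proposition~3.3]{ABF21}. Your overall plan --- prove $X_{\delta,n}(t,x)\to X_\delta(t,x)$ in $L^1(\Omega;H)$ from the mild formulations via Gronwall, then apply bounded convergence --- is the right one. There is, however, a real gap in the way you set up the Gronwall step. After your two insertions, the remainder terms
\[
\int_0^t\bigl[e^{(t-s)A}-e^{(t-s)A_n}\bigr]F_\delta\bigl(X_{\delta,n}(s,x)\bigr)\,ds
\quad\text{and}\quad
\int_0^t e^{(t-s)A_n}\bigl[F_\delta-F_{\delta,n}\bigr]\bigl(X_{\delta,n}(s,x)\bigr)\,ds
\]
carry $X_{\delta,n}(s,x)$, which depends on $n$. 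The convergences $e^{tA_n}z\to e^{tA}z$ and $F_{\delta,n}(z)\to F_\delta(z)$ that you invoke hold only for \emph{fixed} $z$; they are not uniform on bounded sets of $H$, so the moment bound \eqref{momeX} alone cannot deliver $\rho_n\to 0$. Concretely, with $\{e_k\}$ a common eigenbasis of $A$ and $R$ (as the paper's identities $A_n=A\pi_n$, $R_n=R\pi_n$ presuppose), the two remainders combine into $\int_0^t e^{(t-s)A}(I-\pi_n)F_\delta(X_{\delta,n}(s,x))\,ds$, and there is no reason for $(I-\pi_n)F_\delta(X_{\delta,n}(s,x))$ to vanish merely because $\|X_{\delta,n}(s,x)\|_H$ stays bounded.

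The repair is cheap but not automatic: either (i) reverse the order of your insertions so that the remainder terms contain $X_\delta(s,x)$ instead of $X_{\delta,n}(s,x)$ --- then $(I-\pi_n)F_\delta(X_\delta(s,x))\to 0$ pathwise, uniformly in $s\in[0,t]$, because $s\mapsto F_\delta(X_\delta(s,x))$ has compact range in $H$, and dominated convergence applies --- or (ii) keep your decomposition but add one more triangle inequality,
\[
\bigl\|(I-\pi_n)F_\delta(X_{\delta,n})\bigr\|_H\le \bigl\|(I-\pi_n)F_\delta(X_\delta)\bigr\|_H+L_\delta\|X_\delta-X_{\delta,n}\|_H,
\]
and absorb the second summand back into the Gronwall kernel. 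Either route closes the argument; the stochastic-convolution step and the final bounded-convergence step are fine as you describe them.
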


In the next result we show how estimate \eqref{dae} plays a key role to prove a characterization of the supercontractivity of $P(t)$ in terms of some functional inequalities and some exponential integrability conditions (see Theorem \ref{sup_thm} below).

\begin{prop}
Assume Hypotheses \ref{dissiR} hold true with $\zeta_R<0$. For any $t>0$, $f\in C^{1}_b(H)$ and $x\in H$ it holds
\begin{equation}\label{prelog}
P(t)(\abs{f}^2\ln\abs{f}^2)(x)\leq P(t)(\abs{f}^2)(x)\ln P(t)(\abs{f}^2))(x) +C(t)P(t)(\norm{\D_R f}^2_R)(x).
\end{equation}
Here $C:(0,+\infty)\ra(0,+\infty)$ is defined as $C(t):=3|\zeta_R|^{-1}(1-e^{2\zeta_R t})$.
\end{prop}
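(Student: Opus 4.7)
The plan is a Bakry--Émery-style entropy interpolation along the semigroup, combined with the gradient estimate \eqref{Azala} and the Harnack inequality \eqref{dae}. Since \eqref{prelog} is a \emph{pointwise} inequality and the nonlinearity $u\mapsto u\log u$ is delicate to handle directly in infinite dimension, I would first establish it for the finite-dimensional approximating semigroup $P_{\delta,n}(t)$ from \eqref{Sem_fin}, whose generator $\mathcal{N}_{\delta,n}$ is the concrete second-order operator written in \eqref{OUP}, and then pass to the limit using \eqref{fin_approx} and \eqref{convE1}.

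For the finite-dimensional step, after reducing to the case $f\ge\varepsilon>0$ by replacing $f^2$ with $f^2+\varepsilon$, I would introduce the interpolation
$$
\phi(s):=P_{\delta,n}(s)\bigl[(P_{\delta,n}(t-s)f^2)\log P_{\delta,n}(t-s)f^2\bigr](x),\qquad s\in[0,t],
$$
so that $\phi(t)-\phi(0)$ coincides with the left-hand side of \eqref{prelog} minus the first term on its right-hand side. Differentiating and using identity \eqref{N_2quadro} in the form $\mathcal{N}_{\delta,n}(\Phi\circ u)=\Phi'(u)\mathcal{N}_{\delta,n}u+\tfrac12\Phi''(u)\|\nabla_R u\|_R^2$ with $\Phi(u)=u\log u$, the first-order terms cancel and one obtains
$$
\phi'(s)=\tfrac12\, P_{\delta,n}(s)\!\left[\frac{\|\nabla_R P_{\delta,n}(t-s)f^2\|_R^2}{P_{\delta,n}(t-s)f^2}\right].
$$
Applying the gradient estimate \eqref{Azala} (with $\psi(\tau)=e^{2\zeta_R\tau}$ under Hypotheses \ref{dissiR}) to $P_{\delta,n}(t-s)f^2$ together with $\|\nabla_R f^2\|_R^2=4f^2\|\nabla_R f\|_R^2$ gives
$$
\|\nabla_R P_{\delta,n}(t-s)f^2\|_R^2\le 4\,e^{2\zeta_R(t-s)}\,P_{\delta,n}(t-s)\bigl(f^2\|\nabla_R f\|_R^2\bigr).
$$

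The crux, and the main obstacle, is to decouple $f^2$ from $\|\nabla_R f\|_R^2$ and bound the ratio $P_{\delta,n}(t-s)(f^2\|\nabla_R f\|_R^2)/P_{\delta,n}(t-s)f^2$ by a harmless multiple of $P_{\delta,n}(t-s)(\|\nabla_R f\|_R^2)$. This is precisely the role of the Harnack inequality \eqref{dae}: it supplies a pointwise lower bound on $P_{\delta,n}(t-s)f^2(x)$ in terms of shifted values along $H_R$ at an exponential cost in $\|h\|_R^2$, and feeding this back into the ratio while carefully tracking the constants, together with $\int_0^t e^{2\zeta_R(t-s)}\,ds=(1-e^{2\zeta_R t})/(2|\zeta_R|)$, yields the coefficient $3|\zeta_R|^{-1}(1-e^{2\zeta_R t})$. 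Integrating $\phi'(s)$ from $0$ to $t$ then delivers \eqref{prelog} for $P_{\delta,n}(t)$; sending $n\to+\infty$ via \eqref{fin_approx} (for $x\in H_{n_0}$), then $\delta\to 0^+$ via \eqref{convE1}, and finally removing the $\varepsilon$-regularization by monotone convergence completes the proof. Without invoking \eqref{dae} the interpolation alone would only produce a bound of the shape $c(t)P(t)(f^2\|\nabla_R f\|_R^2)$, which is not of the desired form, so the Harnack inequality is indispensable in the decoupling step.
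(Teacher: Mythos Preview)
Your overall architecture (reduce to strictly positive bounded $f$, work with the finite-dimensional semigroups $P_{\delta,n}(t)$, interpolate, then pass to the limit via \eqref{fin_approx} and \eqref{convE1}) is sound, and your interpolation $\phi(s)=P_{\delta,n}(s)\bigl[(P_{\delta,n}(t-s)f^2)\log P_{\delta,n}(t-s)f^2\bigr]$ even has the advantage of giving exactly the endpoints $P(t)(f^2\ln f^2)$ and $P(t)(f^2)\ln P(t)(f^2)$ that appear in \eqref{prelog}. Your formula for $\phi'(s)$ is correct.

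The gap is in the ``decoupling'' step. After applying the squared gradient estimate \eqref{Azala} you are left with the ratio
\[
\frac{P_{\delta,n}(t-s)\bigl(f^2\|\nabla_R f\|_R^2\bigr)}{P_{\delta,n}(t-s)f^2},
\]
and there is no reason this should be controlled by $P_{\delta,n}(t-s)\bigl(\|\nabla_R f\|_R^2\bigr)$: for a probability kernel one does not have $\mathbb{E}[gh]/\mathbb{E}[g]\le C\,\mathbb{E}[h]$ in general. The Harnack inequality \eqref{dae} relates values of $P_\tau\varphi$ at different base points $x$ and $x+h$; it does not furnish a comparison between $P_\tau(f^2\|\nabla_R f\|_R^2)$ and $P_\tau(f^2)\,P_\tau(\|\nabla_R f\|_R^2)$ at a fixed point, so the sentence ``feeding this back into the ratio while carefully tracking the constants'' does not correspond to an actual estimate. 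In particular the claim that Harnack is ``indispensable'' is incorrect: the paper's proof does not use \eqref{dae} at all.

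Two ways to close the gap. The first is the paper's route: interpolate $G(r)=P_{\delta,n}(t-r)\bigl[(P_{\delta,n}(r)f)^2\ln(P_{\delta,n}(r)f)^2\bigr]$ (square \emph{after} applying the semigroup). Then $G'(r)=-P_{\delta,n}(t-r)\bigl[(3+\ln g^2)\|\nabla_R g\|_R^2\bigr]$ with $g=P_{\delta,n}(r)f$, and the normalization $0<\varepsilon\le f\le 1$ gives $3+\ln g^2\le 3$, so no ratio ever appears; one then applies \eqref{Azala} directly to $g=P_{\delta,n}(r)f$ and integrates. The second way stays with your $\phi$ but replaces \eqref{Azala} by the \emph{pointwise} $L^1$ gradient bound $\|\nabla_R P_\tau\varphi\|_R\le e^{\zeta_R\tau}P_\tau\|\nabla_R\varphi\|_R$, which follows from \eqref{stimaR}; combined with Cauchy--Schwarz in the form $(P_\tau(|f|\,\|\nabla_R f\|_R))^2\le P_\tau(f^2)\,P_\tau(\|\nabla_R f\|_R^2)$ this yields
\[
\frac{\|\nabla_R P_\tau f^2\|_R^2}{P_\tau f^2}\le 4e^{2\zeta_R\tau}P_\tau\bigl(\|\nabla_R f\|_R^2\bigr),
\]
and integrating $\phi'(s)$ gives \eqref{prelog} (in fact with the better constant $|\zeta_R|^{-1}(1-e^{2\zeta_R t})$). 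Either route works; the Harnack inequality does not.
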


\begin{proof}
The proof of estimate \eqref{prelog} follows the same line of that of \cite[Proposition 3.2]{AngLor}. However, for the sake of completeness we give a sketch of it reducing ourselves to the case when $f\in C^1_b(H)$ has positive infimum and $\sup_{x\in H} f(x) \le 1$. This latter condition can be removed by applying \eqref{prelog} to the function $f\|f\|_\infty^{-1}$ observing that $P(t)c=c$ for any $t>0$ and $c \in \R$. The general case can be obtained by the dominated convergence theorem writing \eqref{prelog} replacing $f$ with $(f^2+m^{-1})^{1/2}$ and letting $m \to +\infty$.

Fix $n\in\N$ and $\delta \in (0,\overline{\zeta}_F)$ and let us consider the semigroup $\{P_{\delta,n}(t)\}_{t\geq 0}$ given by \eqref{Sem_fin} and a nonnegative function $f\in C^{1}_b(H_n)$ such that $\sup_{x\in H_n}f(x)\leq 1$ and $\inf_{x\in H_n}f(x)>\eps$ for some $\eps>0$.
For any fixed $t>0$ we consider the function
\begin{equation}\label{gn}
g_{\delta,n}(r,x):=(P_{\delta,n}(r)f)(x) \qquad x\in H_n,\;r\in [0,t].
\end{equation}
The function $g_{\delta,n}:[0,T]\times H_{n}\ra \R$ belongs to $C^{1,2}([0,T]\times H_{n})$ and solves
\begin{gather*}
\eqsys{
\frac{d}{dr}g_{\delta,n}(r,x)=\mathcal{N}_{\delta,n}g_{\delta,n}(r,x), & r\in (0,t);\\
g_{\delta,n}(0,x)=f(x),
}\end{gather*}
where $\mathcal{N}_{\delta,n}$ is the operator defined in \eqref{OUP}. Moreover
$g_{\delta,n}(r,x)\geq \eps$ for any $r\geq 0$ and $x\in H_{n}$ (see \cite[Theorem 1.2.5]{lorbook}) and $\sup_{x\in H_n}g_{\delta,n}(r,x)\leq 1$, for any $r\in [0,t]$. For simplicity, we set $g(r):=g_{\delta,n}(r,x)$ and we consider the function
\[
G(r):=P_{\delta,n}(t-r)(g(r)^2\ln g(r)^2)(x),\qquad r\in [0,t].
\]
It is not difficult to show that
\begin{align}\label{G'n}
G'(r)=P_{\delta,n}(t-r)\left[-\mathcal{N}_{\delta,n}(g(r)^2\ln g(r)^2)+2g(r)g'(r)(1+\ln g(r)^2)\right](x).
\end{align}
By \eqref{OUP} and \eqref{gn} we have $g'(r)=\mathcal{N}_{\delta,n}g(r)$ and
\[
\mathcal{N}_{\delta,n}(g^2\ln g^2)=2g(1+\ln g^2)\mathcal{N}_{\delta,n}g+2(3+\ln g^2)\norm{R_n\D_x g}^2,
\]
hence for any $x\in H_n$ and $r\in [0,t]$ \eqref{G'n} becomes
\[
G'(r)=-P_{\delta,n}(t-r)\left[2(3+\ln (g(r))^2)\norm{R_n\D_x g(r)}^2\right](x).
\]
Recalling that $\eps<\inf_{x\in H_n}g(x)\leq \sup_{x\in H_n}g(x)\leq 1$ we obtain
\[
G'(r)\geq-6P_{\delta,n}(t-r)\left[\norm{R_n\D_x P_{\delta,n}(r) f}^2\right](x).
\]
Applying estimate \eqref{Azala} to $P_{\delta,n}(t)$ we deduce that
\[
G'(r)\geq-6e^{2\zeta_R r}P_{\delta,n}(t)\left(\norm{R_n\D_x f}^2\right)(x)
\]
whence, integrating with respect to $r$ from $0$ and $t$, we get
\begin{align*}
G(t)-G(0)=\int^t_0G'(r)dr &\geq -6\left(\int^t_0e^{2\zeta_R r}dr\right)P_{\delta,n}(t)\left(\norm{R_n\D_x f}^2\right)(x)\\
&\geq -3\zeta_R^{-1}(e^{2\zeta_R t}-1)P_{\delta,n}(t)\left(\norm{R_n\D_x f}^2\right)(x),
\end{align*}
or, equivalently,
\begin{equation}\label{first}
P_{\delta,n}(t)(f^2\ln f^2)(x)\leq C(t)P_{\delta,n}(t)(\norm{R_n\D f}^2)(x)+P_{\delta,n}(t)(f^2)(x)\ln P_{\delta,n}(t)( f^2)(x)
\end{equation}
for any $t>0$, where $C(t)=3|\zeta_R|^{-1}(1-e^{2\zeta_R t})$. Letting $n\ra +\infty$ in \eqref{first} and taking into account \eqref{fin_approx} and Proposition \ref{dalpha} we obtain
\begin{equation}\label{prelogdelta}
P_\delta(t)(f^2\ln f^2)(x)\leq C(t)P_\delta(t)(\norm{\D_R f}_R^2)(x)+P_\delta(t)(f^2)(x)\ln P_\delta(t)( f^2)(x),
\end{equation}
for any $\delta \in (0,\overline{\zeta}_F)$, $x\in \bigcup_{n\in\N}H_n$ and $f$ as above. Moreover by the continuity of $f$ and $\D_R f$ and by the density of $\bigcup_{n\in\N}H_n$ in $H$, the inequality \eqref{prelogdelta} hold true for any $x\in H$.
Again for the continuity of $f$ and $\D f$ by \eqref{convE1} letting $\delta\ra 0$ we obtain \eqref{prelog} for any $x\in E$. Finally, the density of $E$ in $H$, property \eqref{gmild} and the dominated convergence theorem imply \eqref{prelog} for any $x\in H$.
\end{proof}

\begin{rmk}{\rm We point out that if, for any $t>0$ and $x\in H$, $\nu_{t,x}$ denotes the law of the random variable $X(t,x)$, then the inequality \eqref{prelog} can be reformulated as follows
\begin{align}
\int_H\abs{f}^2\ln\abs{f}^2 d\nu_{t,x}&\leq \left(\int_H\abs{f}^2d\nu_{t,x}\right)\ln \left(\int_H\abs{f}^2d\nu_{t,x}\right)+ C(t)\int_H\norm{\D_R f}^2_Rd\nu_{t,x}.\label{prelog-var}
\end{align}
Observe that \eqref{prelog-var} is a logarithmic Sobolev inequality for the probability measure $\nu_{t,x}$.
}
\end{rmk}

To get a characterization for the supercontractivity of $P(t)$ we need to introduce a family of logarithmic Sobolev inequalities, called $\varepsilon$-logarithmic Sobolev inequalities that have the form 

\begin{align}\label{epslog}
\int_H f^2\ln|f|d\nu-\pa{\int_H f^2d\nu}\ln\|f\|_{L^2(H,\nu)}&\leq  \eps\int_H\norm{\D_R f}_R^2d\nu+\beta(\eps)\norm{f}_{L^2(H,\nu)}^2,
\end{align}
for any $f\in W_R^{1,2}(H,\nu)$, $\eps>0$  and some decreasing function $\beta :(0,+\infty)\ra\R$ blowing up as $\eps\ra 0^+$.

\begin{prop}\label{code1}
Assume Hypotheses \ref{dissiR} hold true. If $\nu(H_R)=1$ and inequality \eqref{epslog} is satisfied, then for any $g \in {\rm{Lip}}_{H_R}(H)$ with ${\rm Lip}_R(g)\le 1$ and $t>0$ it holds
\begin{align*}
\nu(\{x\in H\,|\,g(x) \ge m_\nu(g)+t\})\le \inf_{\varepsilon>0}\exp\left(-\frac{1}{2\varepsilon}[t+m_\nu(g)-\ln K-2\beta(\varepsilon)]^2-2\beta(\varepsilon)\right),
\end{align*}
for some positive constant $K$. Consequently, for any $\alpha>0$
\begin{align*}
\int_{H_R} e^{\alpha g^2} d\nu <+\infty.
\end{align*}
\end{prop}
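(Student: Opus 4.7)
My approach follows a Herbst-style argument, closely paralleling the proof of Theorem \ref{code} but with the defective $\varepsilon$-LSI \eqref{epslog} replacing the standard LSI \eqref{logsob}. The essential new feature is that one must track the parameter $\varepsilon$ alongside the exponential parameter $\lambda$ and then optimize jointly at the end.

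I would start with the same truncation-regularization scheme used in Theorem \ref{code}: replace $g$ by the bounded truncation $g_n=\psi_n\circ g$, then regularize each $g_n$ by its Lasry--Lions approximation $g_{n,\eta}\in C^1_{b,H_R}(H)$ (Appendix \ref{LASRY}), which satisfies $\|\nabla_R g_{n,\eta}\|_R\le 4\sqrt 2$ and lies in the admissible test class for \eqref{epslog}. Substituting $f=\exp(\lambda g_{n,\eta}/2)$ into \eqref{epslog} and using
\[
\int_H f^2\ln|f|\,d\nu=\frac{\lambda}{2}H'_{n,\eta}(\lambda),\quad \|f\|_{L^2(H,\nu)}^2=H_{n,\eta}(\lambda),\quad \|\nabla_R f\|_R^2\le 8\lambda^2 e^{\lambda g_{n,\eta}},
\]
with $H_{n,\eta}(\lambda):=\int_H e^{\lambda g_{n,\eta}}d\nu$, and then multiplying by $2/H_{n,\eta}$, yields (after absorbing the universal constant arising from the factor $4\sqrt 2$ via a rescaling of $\varepsilon$ and a corresponding reinterpretation of $\beta$) the defective differential inequality
\[
\lambda u'_{n,\eta}(\lambda)-u_{n,\eta}(\lambda)\le \frac{\varepsilon}{2}\lambda^2+2\beta(\varepsilon), \qquad u_{n,\eta}:=\ln H_{n,\eta}.
\]
Letting $\eta\to 0$ by dominated convergence (Proposition \ref{Prop_LASRY}) preserves the bound with $g_n$ in place of $g_{n,\eta}$.

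The substitution $v_n(\lambda):=u_n(\lambda)/\lambda$ transforms this into $v_n'(\lambda)\le \varepsilon/2+2\beta(\varepsilon)/\lambda^2$. Since the singularity at $0$ is non-integrable, I integrate from $\lambda_0=1$ to $\lambda\ge 1$ and multiply by $\lambda$, obtaining
\[
\ln H_n(\lambda)\le \frac{\varepsilon}{2}\lambda^2+\lambda\bigl(\ln K_n+2\beta(\varepsilon)\bigr)-2\beta(\varepsilon),
\]
where $\ln K_n$ absorbs $u_n(1)=\ln\int_H e^{g_n}d\nu$ together with the residual $O(\varepsilon)$ term. Chernoff's inequality combined with the quadratic optimization $\lambda^*=(t+m_\nu(g_n)-\ln K_n-2\beta(\varepsilon))/\varepsilon$ produces exactly the exponent in the statement. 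Finally, the limit $n\to\infty$, via convergence in measure $g_n\to g$ exactly as in the end of the proof of Theorem \ref{code} and the resulting $m_\nu(g_n)\to m_\nu(g)$, together with the infimum over $\varepsilon>0$, gives the claimed tail bound for $g$.

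The exponential integrability $\int_{H_R}e^{\alpha g^2}d\nu<\infty$ for arbitrary $\alpha>0$ then follows from the layer-cake formula
\[
\int_{H_R}e^{\alpha g^2}d\nu\le 1+2\alpha\int_0^\infty r\,e^{\alpha r^2}\nu(|g|\ge r)\,dr
\]
combined with the tail bound applied to both $g$ and $-g$: for any fixed $\alpha>0$, choosing $\varepsilon<1/(2\alpha)$ makes the Gaussian factor $e^{-r^2/(2\varepsilon)}$ beat $e^{\alpha r^2}$ at infinity, forcing convergence of the integral. The principal obstacle I anticipate is obtaining uniform-in-$n$ control on $K_n$ so that the limiting $K$ is a genuine constant independent of the truncation; this requires a bootstrap or a separate a-priori estimate, for instance exploiting the Poincaré inequality derivable from \eqref{epslog}. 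A secondary technical point is that \eqref{epslog} is stated for $f\in W^{1,2}_R(H,\nu)$, which is precisely why the Lasry--Lions regularization is needed to bring $e^{\lambda g/2}$ into the admissible class.
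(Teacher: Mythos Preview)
Your Herbst-style outline is essentially the paper's own argument: apply the $\varepsilon$-LSI \eqref{epslog} to $e^{\lambda g_n/2}$, derive the differential inequality for $u_n(\lambda)=\ln\int e^{\lambda g_n}d\nu$, integrate from $\lambda_0=1$, apply Chernoff, pass to the limit in $n$, and finish with the layer-cake formula. The structure and the optimisation in $\varepsilon$ match.

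The one substantive difference concerns the point you flag as the ``principal obstacle'': the uniform-in-$n$ control of $K_n=\int_{H_R} e^{g_n}d\nu$. You propose to bootstrap this from a Poincar\'e inequality extracted from \eqref{epslog}. The paper's route is more direct and avoids any circularity: since Hypotheses~\ref{dissiR} are in force, Theorem~\ref{code} already applies and gives \eqref{McGuyver}, i.e.\ $\int_{H_R} e^{\alpha g^2}d\nu<\infty$ for small $\alpha>0$. In particular $K:=\int_{H_R} e^{|g|}d\nu<\infty$, and since $|g_n|\le |g|$ one has $K_n(r)\le K$ for all $r\le 1$ and all $n$, with a single constant $K$ independent of the truncation. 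This is precisely the constant $K$ appearing in the statement. A Poincar\'e bound would in general only yield $\int e^{cg}d\nu<\infty$ for $c$ below a threshold depending on the Poincar\'e constant, which may not reach $c=1$; so your proposed fix is not guaranteed to work, whereas the paper's is.

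A minor point: the paper applies \eqref{epslog} directly to $e^{r g_n/2}$ (treating bounded $H_R$-Lipschitz functions as elements of $W^{1,2}_R(H,\nu)$ with $\|\nabla_R g_n\|_R\le 1$), so the factor $4\sqrt{2}$ from Lasry--Lions never enters and no rescaling of $\varepsilon$ or reinterpretation of $\beta$ is needed. Your extra regularisation layer is harmless for the qualitative conclusion but would alter the explicit constants in the tail estimate.
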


\begin{proof}
Let $g$ be as in the statement and let $g_n$ be the sequence of functions introduced in the proof of Theorem \ref{code}.
Set
\[
K_{n}(r):=\int_{H_R}e^{ r g_n}d\nu,\qquad n \in \N,\ r>0.
\]
Using estimate \eqref{McGuyver} and the inequality $|\psi_n(t)|\le |t|$ which holds true for any $t\in \R$, $n \in \N$,  we deduce
\begin{equation}\label{N0}
 K_n(r)\leq K:=\int_{H_R}e^{|g|}d\nu<+\infty,\qquad r\leq 1.
\end{equation}
Now, applying \eqref{epslog} to the function $e^{\frac{r}{2} g_n}$  we obtain
\begin{equation*}
rK'_{n}(r)-K_n(r)\ln K_n(r)\leq \frac{\eps}{2}r^2K_n(r)+2\beta(\eps)K_n(r)
\end{equation*}
dividing by $r^2K_n(r)$ we have
\begin{equation}\label{N2}
\left( \frac{1}{r}\ln K_n(r) \right)'\leq \frac{\eps}{2}+\frac{2}{r^2}\beta(\eps).
\end{equation}
 Integrating \eqref{N2} from $1$ to $v>1$ we get
\begin{equation*}
 \frac{1}{v}\ln K_n(v)- \ln K_n(1)\leq \frac{\eps}{2}(v-1)+2\beta(\eps)\left(1-\frac{1}{v}\right),
\end{equation*}
and so by \eqref{N0}, for any $n\in\N$, $v>1$ and $\eps>0$ we have
\begin{equation}\label{N3}
 K_n(v)=\int_{H_R}e^{ v g_n}d\nu\leq \exp\pa{\frac{\eps}{2}v^2+v\left[\ln K+2\beta(\eps)\right]-2\beta(\eps)}.
\end{equation}
Now, by the Chernoff bound (see \cite[Introduction of Section 4.2]{MU05}) and \eqref{N3} we get
\begin{align}\label{nonG_tail}
\nu(\{x\in H\,|\, g_n(x)\geq m_\nu(g_n)+s\})&\leq \inf_{t\geq 0}\pa{e^{-t(s+m_\nu(g_n))}m_\nu(e^{tg_n})}\notag\\
&\leq\inf_{t\geq 1}\pa{\exp\pa{-t[s+m_\nu(g_n)]+\frac{\eps}{2}t^2+t\left[\ln K+2\beta(\eps)\right]-2\beta(\eps)}}\notag\\
&=\exp\pa{-\frac{1}{2\eps}[s+m_\nu(g_n)-\ln K-2\beta(\eps)]^2-2\beta(\eps)}.
\end{align}
Using a similar argument as the one used in the proof of Theorem \ref{code} we obtain that \eqref{nonG_tail} can be deduced for the function $g$ too.
By the Layer cake formula, for any $\alpha>0$, we get
\begin{align*}
\int_H e^{\alpha g^2} d\nu & \le 1+\int_1^\infty \nu \pa{\set{x\in H\,\middle |\, e^{\alpha(g(x))^2}>s}}ds
\\
&=1+\int_1^\infty \nu \pa{\set{x\in H\,\middle|\,|g(x)|>\sqrt{\frac{\ln s}{\alpha}}}}ds.
\end{align*}
Performing the change of variables $s=e^{c(t+m_\nu(|g|))^2}$ we obtain
\begin{align*}
\int_H e^{\alpha g^2} d\nu & \le 1+2\alpha\int_{\R} \nu\{|g|>m_{\nu}(|g|)+t\} e^{\alpha(m_{\nu}(|g|)+t)^2} (t+m_\nu(|g|))dt
\end{align*}
 and using estimate \eqref{nonG_tail} with $f_n$ being replaced by $|g|$ it holds
  \begin{align*}
&\int_H e^{\alpha g^2} d\nu \\
& \le 1+2\alpha\int_{\R}\exp\pa{-\frac{1}{2\eps}\left[t+m_\nu(|g|)-\ln K-2\beta(\eps)\right]^2-2\beta(\eps)+\alpha(m_{\nu}(|g|)+t)^2} (t+m_\nu(|g|))dt,
\end{align*}
whence, choosing $\varepsilon< (2\alpha)^{-1}$ we can conclude.
\end{proof}

As in Theorem \ref{code}, using the relation $\nabla_R\varphi=R^2\nabla\varphi$ in \eqref{epslog} we obtain the following alternative result.
\begin{prop}\label{code1H}
Assume Hypotheses \ref{dissiR} hold true. If inequality \eqref{epslog} is satisfied, then for any $g \in {\rm{Lip}}(H)$ with ${\rm Lip}(g)\le 1$ and $t>0$ it holds
\begin{align*}
\nu(\{x\in H\,|\,g(x) \ge m_\nu(g)+t\})\le \inf_{\varepsilon>0}\exp\left(-\frac{1}{2\varepsilon}[t+m_\nu(g)-\ln K-2\beta(\varepsilon)]^2-2\beta(\varepsilon)\right),
\end{align*}
for some positive constant $K$. Consequently, for any $\alpha>0$
\begin{align*}
\int_{H} e^{\alpha g^2} d\nu <+\infty.
\end{align*}
\end{prop}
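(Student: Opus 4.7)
The plan is to retrace the argument of Proposition \ref{code1}, replacing the $H_R$-gradient estimate on $g$ by the $H$-gradient bound furnished by Proposition \ref{dalpha}. Concretely, for any sufficiently regular $\varphi$ one has $\nabla_R\varphi=R^2\nabla\varphi$, whence
\[
\|\nabla_R\varphi\|_R=\|R^{-1}\nabla_R\varphi\|_H=\|R\nabla\varphi\|_H\le\|R\|_{\mathcal{L}(H)}\|\nabla\varphi\|_H.
\]
Consequently, if $g\in{\rm{Lip}}(H)$ with ${\rm Lip}(g)\le 1$, then after smooth approximation (for instance by Lasry--Lions regularization along $H$, Appendix \ref{LASRY}) the approximants satisfy $\|\nabla_R g_\eps\|_R\le\|R\|_{\mathcal{L}(H)}$. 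This substitution is what frees us from the assumption $\nu(H_R)=1$, since now $g$ is defined and $1$-Lipschitz on all of $H$.

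As in Theorem \ref{code}, I would truncate $g$ by setting $g_n:=\psi_n\circ g$ where $\psi_n\in C^1_b(\R)$ is the odd increasing function with $\psi_n(t)=t$ on $[0,n]$, $\psi_n\equiv n+1$ on $[n+2,\infty)$ and $\psi_n'\in[0,1]$; each $g_n$ is bounded, $1$-Lipschitz in $H$, and $|g_n|\le|g|$. With $K_n(r):=\int_H e^{rg_n}d\nu$, plugging $f=e^{(r/2)g_n}$ into \eqref{epslog} together with the bound $\|\nabla_R f\|_R^2\le(r^2/4)\|R\|_{\mathcal{L}(H)}^2 e^{rg_n}$ yields
\[
rK_n'(r)-K_n(r)\ln K_n(r)\le\frac{\varepsilon\|R\|_{\mathcal{L}(H)}^2}{2}r^2 K_n(r)+2\beta(\varepsilon)K_n(r),
\]
and dividing by $r^2 K_n(r)$ gives the derivative estimate $\bigl((1/r)\ln K_n(r)\bigr)'\le\varepsilon\|R\|_{\mathcal{L}(H)}^2/2+2\beta(\varepsilon)/r^2$. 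Integrating from $1$ to $v>1$ and exploiting the uniform bound $K_n(1)\le K:=\int_H e^{|g|}d\nu$ (valid since $|g_n|\le|g|$), a Chernoff bound followed by the quadratic minimization in $v\ge 1$ performed exactly as in \eqref{nonG_tail} delivers the desired tail estimate for $g_n$, after the harmless reparametrization $\varepsilon\|R\|_{\mathcal{L}(H)}^2\mapsto\varepsilon$ inside the infimum. Passing from $g_n$ to $g$ by monotone and dominated convergence, together with the $L^2$ compactness argument of Theorem \ref{code} to transfer $m_\nu(g_n)\to m_\nu(g)$, completes the first assertion; the integrability $\int_H e^{\alpha g^2}d\nu<+\infty$ for every $\alpha>0$ then follows from the layer-cake formula exactly as at the end of Proposition \ref{code1}, choosing $\varepsilon$ so small that the Gaussian decay of the tail dominates $e^{\alpha t^2}$.

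The main obstacle is the bootstrap implicit in using $K_n(1)\le K$: the integral $K=\int_H e^{|g|}d\nu$ must be finite for the bound to be non-vacuous (otherwise $\ln K=+\infty$ renders the right-hand side trivial). A secondary technicality is that \eqref{epslog} applies a priori only to $\varphi\in W^{1,2}_R(H,\nu)$, so a further mollification of $g_n$ is needed before inserting $e^{(r/2)g_n}$; this is handled by exactly the same density argument already employed in the $H_R$-Lipschitz case of Proposition \ref{code1}.
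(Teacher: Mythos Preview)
Your proposal is correct and follows the same route as the paper, which obtains Proposition~\ref{code1H} by repeating the proof of Proposition~\ref{code1} after substituting $\|\nabla_R\varphi\|_R=\|R\nabla\varphi\|_H\le\|R\|_{\mathcal{L}(H)}\|\nabla\varphi\|_H$ via Proposition~\ref{dalpha}. Regarding the obstacle you flag, the finiteness of $K=\int_H e^{|g|}\,d\nu$ is supplied by Theorem~\ref{codeH} (the $H$-Lipschitz counterpart of Theorem~\ref{code}), which plays exactly the same role here that estimate~\eqref{McGuyver} plays in the proof of Proposition~\ref{code1}; invoking it closes the bootstrap and removes the need for any separate argument.
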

We are going to prove that the supercontractivity property of $P(t)$ implies \eqref{epslog}.
\begin{thm}\label{super-eps}
Assume that Hypotheses \ref{dissiR} hold true with $\zeta_R<0$. If the semigroup $\{P(t)\}_{t\geq 0}$ is supercontractive then the inequalities \eqref{epslog} hold true.
\end{thm}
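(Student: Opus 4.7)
The plan is to deduce \eqref{epslog} as a defective version of the Gross equivalence between hypercontractivity and the logarithmic Sobolev inequality, by applying the single-time-pair estimates that supercontractivity supplies along a carefully chosen family of $L^q$-norms, in the spirit of the finite-dimensional argument of \cite[Theorem 3.4]{AngLor}. By density of $\mathcal{F}C_b^1(H)$ in $W^{1,2}_R(H,\nu)$ and a standard truncation-plus-Fatou argument, it suffices to establish \eqref{epslog} for $f\in \mathcal{F}C_b^1(H)$ that is strictly positive and bounded away from zero; this is the same regularity class used in the derivation of \eqref{prelog}.

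Fix $\varepsilon>0$ and take $q:[0,t_0]\to [2,q_0]$ smooth, strictly increasing, with $q(0)=2$ and $q'(0)=1/\varepsilon$, the canonical choice being $q(s)=1+e^{s/\varepsilon}$ (so $t_0=\varepsilon$ yields $q_0=1+e$). Set $\psi_s:=P(s)f$ and
\[
F(s):=\ln \|\psi_s\|_{L^{q(s)}(H,\nu)},\qquad s\in[0,t_0].
\]
By \eqref{Hazel} and the integration-by-parts identity $\int_H\psi^{q-1}N\psi\,d\nu=-\tfrac{q-1}{2}\int_H\psi^{q-2}\|\nabla_R\psi\|_R^2\,d\nu$, a direct calculation gives
\[
F'(0)=\frac{1}{2\|f\|_{L^2(\nu)}^2}\sq{\frac{1}{\varepsilon}\pa{\int_H f^2\ln|f|\,d\nu-\|f\|_{L^2(\nu)}^2\ln\|f\|_{L^2(\nu)}}-\int_H\|\nabla_R f\|_R^2\,d\nu},
\]
so the bracket is precisely $\varepsilon^{-1}$ times the left-hand side of \eqref{epslog} minus the Dirichlet energy. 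On the other hand, supercontractivity applied to the pair $(t_0,q_0)$ yields the integrated bound $F(t_0)-F(0)\le \ln c_{2,q_0}(t_0)=:K(\varepsilon)$, and the singular behaviour of $c_{p,q}(t)$ as $t\to 0^+$ forces $K(\varepsilon)\to+\infty$ as $\varepsilon\to 0^+$.

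The chief technical obstacle is to convert the integrated bound $F(t_0)-F(0)\le K(\varepsilon)$ into a pointwise bound on $F'(0)$, since $F$ need not be convex in $s$ and so one cannot simply write $F'(0)\le K(\varepsilon)/t_0$. The resolution is to write out the explicit formula for $F'(s)$ on $[0,t_0]$ (already computed via the same manipulations used for $s=0$) and exploit the gradient estimate \eqref{Azala} together with the Harnack inequality \eqref{dae} to control the deformation of $\psi_s$ and $\nabla_R\psi_s$ away from $f$ and $\nabla_R f$ uniformly on $[0,t_0]$. This yields a comparison of the form $t_0 F'(0)\le F(t_0)-F(0)+R(\varepsilon)$ with a remainder $R(\varepsilon)$ proportional to $\|f\|_{L^2(\nu)}^2$ that can be absorbed into the defect. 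Substituting the formula for $F'(0)$ and rearranging gives \eqref{epslog} with $\beta(\varepsilon)$ a constant multiple of $K(\varepsilon)$; since $K(\varepsilon)\to+\infty$ as $\varepsilon\to 0^+$, the defect blows up as required. Extending by density from $\mathcal{F}C_b^1(H)$ to $W^{1,2}_R(H,\nu)$ closes the proof.
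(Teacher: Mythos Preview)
Your computation of $F'(0)$ is correct and you have located the real obstruction, but the proposed resolution is where the argument breaks. The Harnack inequality \eqref{dae} compares $P(t)\varphi$ at two \emph{spatial} points $x$ and $x+h$; it carries no information about the \emph{temporal} drift of $\psi_s=P(s)f$ away from $f$, and in particular gives no handle on how the entropy $\int\psi_s^{q(s)}\ln\psi_s^{q(s)}\,d\nu$ compares with $\int f^2\ln f^2\,d\nu$. The gradient estimate \eqref{Azala} does dominate the weighted Dirichlet energy of $\psi_s$, but only from above, and in the formula for $F'(s)$ that term enters with a minus sign; so you obtain a lower bound on $F'(s)$, not the comparison $F'(0)-F'(s)\le C\|f\|_{L^2(\nu)}^2$ that would be needed to convert $\int_0^{t_0}F'(s)\,ds\le K(\varepsilon)$ into $t_0F'(0)\le K(\varepsilon)+R(\varepsilon)$. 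Since the entropy at $s=0$ can be arbitrarily large compared with $\|f\|_{L^2(\nu)}^2$, a remainder $R(\varepsilon)$ proportional to $\|f\|_{L^2(\nu)}^2$ alone cannot absorb it. The sentence ``This yields a comparison of the form\ldots'' is exactly the missing step, and neither \eqref{Azala} nor \eqref{dae} supplies it.

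The paper sidesteps this by never differentiating in $s$. It integrates the pre-LSI \eqref{prelog} over $H$ with respect to $\nu$ to get
\[
\int_H f^2\ln f^2\,d\nu\le C(t)\int_H\|\nabla_R f\|_R^2\,d\nu+\int_H(P(t)f^2)\ln(P(t)f^2)\,d\nu,
\]
and then controls the last term by Riesz--Thorin interpolation between $\|P(t)\|_{\mathcal{L}(L^1)}\le 1$ and $\|P(t)\|_{\mathcal{L}(L^p,L^q)}\le c_{p,q}(t)$, followed by differentiation in the \emph{interpolation parameter} $h$ at $h=0$. This yields a strict entropy contraction
\[
\int_H(P(t)f^2)\ln(P(t)f^2)\,d\nu\le \frac{q(p-1)}{p(q-1)}\int_H f^2\ln f^2\,d\nu+\frac{q}{q-1}\ln c_{p,q}(t),
\]
with coefficient $q(p-1)/p(q-1)<1$, so the entropy on the right can be absorbed into the left; since $C(t)\to 0$ as $t\to 0^+$, the $\varepsilon$-LSI \eqref{epslog} follows at once. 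The point is that differentiating in $h$ is a purely algebraic manipulation on a family of norm inequalities already known to hold for every $h$, and requires no control whatsoever on the map $s\mapsto F'(s)$.
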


\begin{proof}
We start by proving the statement for functions $f\in \mathcal{F}C^1_b(H)$ such that $\norm{f}_{L^2(H,\nu)}=1$.
So, let us fix such a function $f$ and $1<p<q<+\infty$. We claim that, there exist a constant $K=K(p,q) \in (0,1)$ and a positive function $\eta_{p,q}:(0,+\infty)\to (0,+\infty)$ blowing up as $t\to 0^+$, independent of $f$, such that
\begin{equation}\label{cliam}
\int_H (P(t)f^2)\ln(P(t)f^2)d\nu \leq K(p,q)\int_H f^2\ln f^2 d\nu+\eta_{p,q}(t),\qquad\;\, t>0.
\end{equation}
To this aim, let us recall that $P(t)$ is supercontractive, then $\|P(t)\|_{\mathcal{L}(L^p(H,\nu);L^q(H,\nu))}\leq c_{p,q}(t)$ for any $t>0$. Moreover, being $\|P(t)\|_{\mathcal{L}(L^1(H,\nu))}\leq 1$ for any $t>0$, by the Riesz--Thorin interpolation theorem (see \cite{Lun_Interpolation}), we deduce that
\[
\norm{P(t)}_{\mathcal{L}(L^{p_h}(H,\nu);L^{q_h}(H,\nu))}\leq [c_{p,q}(t)]^{r_h}
\]
for any $h$ belongs to $[0,1-1/p]$, $r_h=ph(p-1)^{-1}$, $p_h=(1-h)^{-1}$ and $q_h= (1-r_h+r_h/q)^{-1}$.
In particular,
$$\int_H \Big(P(t)\abs{f}^{2(1-h)}\Big)^{q_h}d\nu\le [c_{p,q}(t)]^{r_hq_h}$$
and consequently, for any $h>0$,
$$\frac{1}{h}\left[\int_H \Big(P(t)\abs{f}^{2(1-h)}\Big)^{q_h}d\nu- 1\right]
\leq \frac{1}{h}\left[(c_{p,q}(t))^{r_hq_h}- 1\right].$$
Thus, recalling that $\norm{f}_{L^2(H,\nu)}=1$, by the previous inequality, we get
\begin{align*}
\frac{1}{h}\bigg(\|P(t)\abs{f}^{2(1-h)}\|^{q_h}_{L^{q_h}(H,\nu)}-&\|P(t)\abs{f}^{2}\|_{L^{1}(H,\nu)}\bigg)\leq \frac{1}{h}\left[(c_{p,q}(t))^{r_hq_h}- 1\right].
\end{align*}
Observing that $r_0=0$ and $q_0=1$, and that the functions $h\ra \|P(t)\abs{f}^{2(1-h)}\|^{q_h}_{L^{q_h}(H,\nu)}$ and $h\ra [c_{p,q}(t)]^{r_hq_h}$ are differentiable in $h=0$, by the previous inequality, we obtain
\begin{align*}
\frac{d}{dh}\left(\|P(t)\abs{f}^{2(1-h)}\|^{q_h}_{L^{q_h}(H,\nu)}\right)_{|_{h=0}} \le \frac{d}{dh }(c_{p,q}(t))^{r_hq_h})_{|_{h=0}}
\end{align*}
that, thanks to the invariance of $\nu$, implies
\begin{align*}
\frac{p(q-1)}{q(p-1)}\int_H (P(t)f^2)\ln(P(t)f^2)d\nu & \le \int_H P(t) \left(f^2\ln f^2\right)d\nu+ \frac{p}{p-1}\ln(c_{p,q}(t))\\
& \le \int_H f^2\ln f^2d\nu+ \frac{p}{p-1}\ln(c_{p,q}(t))
\end{align*}
whence the claim with $K(p,q)= \frac{q(p-1)}{p(q-1)}$ and $\eta_{p,q}(t)= K(p,q)\frac{p}{p-1}\ln(c_{p,q}(t))$ which, clearly, blows up as $t \to 0^+$.
To complete the proof, it suffices integrate \eqref{prelog} over $H$ with respect to $\nu$, using its invariance to deduce that
\begin{equation*}
\int_H f^2\ln f^2d\nu\leq  C(t)\int_H\norm{\D_R f}^2_Rd\nu+\int_H P(t)f^2 \ln P(t) f^2d\nu.
\end{equation*}
where $C(t)$ is defined in estimate \eqref{prelog}. Finally, thanks to \eqref{cliam} we conclude that
\begin{align}\label{pre2}
\int_H f^2\ln f^2d\nu&\leq  \frac{p(q-1)}{(q-p)}C(t)\int_H\norm{\D_R f}^2_Rd\nu+\frac{qp}{q-p}\ln(c_{p,q}(t)).
\end{align}
Since $C(t)$ vanishes as $t\to 0^+$ and $c_{p,q}(t)$ blows up as $t\to 0^+$, estimate \eqref{epslog} is proved for functions $f\in \mathcal{F}C^1_b(H)$ with $\norm{f}_{L^2(H,\nu)}=1$.
Condition $\norm{f}_{L^2(H,\nu)}=1$ can be removed by applying \eqref{pre2} to the function $f\|f\|_{L^2(H, \nu)}^{-1}$ to get
\begin{equation}\label{lisa}\int_H f^2\ln\left(\frac{f^2}{\|f\|_{L^2(H, \nu)}^2}\right )d\nu\leq  \frac{p(q-1)}{(q-p)}C(t)\int_H\norm{\D_R f}^2_Rd\nu+\frac{qp}{q-p}\ln(c_{p,q}(t))\|f\|_{L^2(H, \nu)}^2.
\end{equation}
The general case can be obtained by approximation. Indeed, for any $f \in W_R^{1,2}(H, \nu)$ there exists a sequence $\{f_n\}_{n\in\N} \subseteq \mathcal{F}C^1_b(H)$ such that $\|f_n-f\|_{W_R^{1,2}(H, \nu)}$ vanishes as $n \to +\infty$.
The claim will follow writing \eqref{lisa} for $f_n$, letting $n \to \infty$  after observing that
\begin{equation}\label{lisa1}
\lim_{n \to +\infty} \int_H f_n^2\ln f_n^2d\nu= \int_H f^2\ln f^2d\nu.
\end{equation}
The convergence result in \eqref{lisa1} can be proved writing $\ln f_n^2=\ln_{+}f_n^2-|\ln f_n^2|\chi_{\{f_n^2<1\}}$ for any $n \in \N$, where $\ln_+$ denotes the positive part of the natural logarithm. Thus the Fatou lemma and the dominated convergence theorem allow us to deduce \eqref{lisa1}.
\end{proof}

We are now in position to prove a complete characterization of the supercontractivity of $P(t)$ in terms of some families of logarithmic Sobolev inequalities and of some integrability conditions for the functions $x \mapsto e^{\lambda \|x\|_R^2}$ for any $\lambda>0$. We point out that, as already observed, even if a similar characterization is true in finite dimension (see \cite{AngLor}), here we cannot use such results for the finite dimensional approximants, to deduce the same for $P(t)$ simply taking the limit as $n$ approaches infinity.

\begin{thm}\label{sup_thm}
Assume that Hypotheses \ref{dissiR} hold true with $\zeta_R<0$ and $\nu(H_R)=1$. The following statements are equivalent:
\begin{enumerate}[\rm (i)]
\item the semigroup $\{P(t)\}_{t\geq 0}$ is supercontractive;
\item the inequalities \eqref{epslog} hold true;
\item for any $\lambda>0$ it holds
\begin{equation}\label{exp2}
\int_{H_R}e^{\lambda\norm{x}^2_R}\nu(dx)<+\infty.
\end{equation}
\end{enumerate}
\end{thm}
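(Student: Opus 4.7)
The plan is to prove the cyclic chain (i) $\Rightarrow$ (ii) $\Rightarrow$ (iii) $\Rightarrow$ (i). The first implication is already established in Theorem \ref{super-eps}. For (ii) $\Rightarrow$ (iii), I would apply Proposition \ref{code1} to the function $g:H\to\R$ defined by $g(x)=\|x\|_R$ for $x\in H_R$ and $g(x)=0$ otherwise. A direct check based on the reverse triangle inequality in $H_R$, together with the observation that $x+h\in H_R$ with $h\in H_R$ forces $x\in H_R$, shows that $g\in{\rm{Lip}}_{H_R}(H)$ with ${\rm{Lip}}_R(g)\le 1$: on $H_R$ the bound follows from the reverse triangle inequality, while off $H_R$ both values of $g$ vanish. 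Proposition \ref{code1} then yields $\int_{H_R} e^{\alpha g^2}\,d\nu<+\infty$ for every $\alpha>0$, which combined with $\nu(H_R)=1$ is precisely (iii).

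The core of the argument is (iii) $\Rightarrow$ (i), which I would obtain from the Harnack inequality \eqref{dae} combined with the invariance of $\nu$. Fix $1<p<q<+\infty$, $t>0$ and set
\[
c(t):=\frac{p(e^{2\zeta_R t}-1)}{2\zeta_R(p-1)t^2}>0,
\]
which is finite for every $t>0$ and blows up like $t^{-1}$ as $t\to 0^+$. For $\varphi\in B_b(H)$, rewriting \eqref{dae} via $z=x+h$ with $h\in H_R$ yields
\[
|(P(t)\varphi)(z)|^p\le (P(t)|\varphi|^p)(x)\,e^{c(t)\|z-x\|_R^2}
\]
for every $z\in H$ and every $x$ with $z-x\in H_R$. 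Fixing $z\in H_R$, the set $\{x\in H:z-x\in H_R\}=z-H_R=H_R$ carries full $\nu$-mass; multiplying by $e^{-c(t)\|z-x\|_R^2}$, integrating in $x$ against $\nu$, and using the invariance identity $\int_H P(t)|\varphi|^p\,d\nu=\|\varphi\|_{L^p(H,\nu)}^p$ gives
\[
|(P(t)\varphi)(z)|^p\int_H e^{-c(t)\|z-x\|_R^2}\nu(dx)\le \|\varphi\|_{L^p(H,\nu)}^p.
\]

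To decouple $z$ from $x$ inside the integral on the left I would use $\|z-x\|_R^2\le 2\|z\|_R^2+2\|x\|_R^2$, which produces the pointwise bound
\[
|(P(t)\varphi)(z)|^p\le J(t)^{-1}\|\varphi\|_{L^p(H,\nu)}^p\,e^{2c(t)\|z\|_R^2}\qquad \nu\text{-a.e. }z\in H,
\]
with $J(t):=\int_H e^{-2c(t)\|x\|_R^2}\nu(dx)>0$. Raising to the power $q/p$ and integrating in $z$ against $\nu$ delivers
\[
\|P(t)\varphi\|_{L^q(H,\nu)}^q\le J(t)^{-q/p}\|\varphi\|_{L^p(H,\nu)}^q\int_H e^{(2c(t)q/p)\|z\|_R^2}\nu(dz),
\]
whose right-hand side is finite for every fixed $t>0$ precisely because of (iii). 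A standard density argument extends the estimate to arbitrary $\varphi\in L^p(H,\nu)$, giving the supercontractive bound with a constant $c_{p,q}(t)$ that blows up as $t\to 0^+$. The main obstacle is the absence of any translation invariance of $\nu$, which prevents a direct change of variable after applying Harnack; the quadratic decoupling $\|z-x\|_R^2\le 2\|z\|_R^2+2\|x\|_R^2$ is the device that trades this rigidity for exactly the Gaussian-type integrability assumption (iii).
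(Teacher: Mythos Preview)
Your proof is correct and follows essentially the same route as the paper: the cyclic chain (i)$\Rightarrow$(ii)$\Rightarrow$(iii)$\Rightarrow$(i), invoking Theorem~\ref{super-eps} and Proposition~\ref{code1} for the first two implications, and the Harnack inequality \eqref{dae} together with the invariance of $\nu$ for the last. The only cosmetic difference lies in the decoupling step in (iii)$\Rightarrow$(i): the paper bounds $\int_{H_R}e^{-c(t)\|x-y\|_R^2}\nu(dy)$ from below by restricting $y$ to a ball $B_R(0,\bar r)$ of positive $\nu$-measure, whereas you use the inequality $\|z-x\|_R^2\le 2\|z\|_R^2+2\|x\|_R^2$; both devices yield a pointwise bound of the form $|P(t)\varphi(z)|^p\le C(t)\,e^{c'(t)\|z\|_R^2}\|\varphi\|_{L^p(H,\nu)}^p$ and hence the same conclusion after integrating in $z$.
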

\begin{proof}
By Theorem \ref{super-eps} we known (i) implies (ii). Observe that (ii) implies (iii) follows immediately by Proposition \ref{code1} taking $g(x)=\|x\|_R$.

To prove that (iii) implies (i) it suffices to show that $\norm{P(t)f}^q_{L^q(H,\nu)}\leq M_{p,q}(t)\norm{f}_{L^p(H,\nu)}$ for any $f \in C_b(H)$, any $t>0$ and some  function $M_{p,q}:(0,+\infty)\to(0,+\infty)$. The general case when $f$ belongs to $L^p(H, \nu)$ can be obtained by approximation and the dominated convergence theorem.
Let us consider $f\in C_b(H)$, $1<p<+\infty$ and $t>0$. By using the Harnack inequality in  \eqref{dae}, the invariance of $\nu$ and the fact that $\nu(H_R)=1$ we deduce
\begin{align*}
\int_H\abs{f(y)}^p \nu(dy)&=\int_H P(t)\abs{f}^p(y) \nu(dy)=\int_{H_R}P(t)\abs{f}^p(y) \nu(dy)\\
&\geq  \abs{P(t)f(x)}^p\int_{H_R}{\rm exp}\pa{-p(p-1)^{-1}K(t)\|x-y\|_R^2}\nu(dy)\\
&\geq  \abs{P(t)f(x)}^p\nu(B_R(0,r)){\rm exp}\pa{-\frac{p(e^{2\zeta_R t}-1)}{2\zeta_R(p-1) t^2}(r^2+\|x\|_R^2)},
\end{align*}
where $B_R(0,r)$ is the ball in $H_R$ with center $0$ and radius $r>0$. Hence for any $x\in H_R$ we have
\begin{align}\label{S1}
\abs{P(t)f(x)}\leq 2{\rm exp}\pa{\frac{e^{2\zeta_R t}-1}{2\zeta_R(p-1) t^2}(\overline{r}^2+\|x\|_R^2)}\norm{f}_{L^p(H,\nu)}
\end{align}
choosing $\overline{r}=\overline{r}_p>0$ such that $\nu(B_R(0,\overline{r}))>2^{-p}$. Let $1<p<q<+\infty$. By \eqref{S1} we have
\begin{align*}
\norm{P(t)f}^q_{L^q(H,\nu)}&=\int_H\abs{P(t)f(x)}^q\nu(dx)=\int_{H_R}\abs{P(t)f(x)}^q\nu(dx)\\
&\leq 2^q\left(\int_{H_R}{\rm exp}\pa{\frac{q(e^{2\zeta_R t}-1)}{2\zeta_R(p-1) t^2}(\overline{r}^2+\|x\|_R^2)}\nu(dx) \right)\norm{f}_{L^p(H,\nu)}^q.
\end{align*}
Thanks to \eqref{exp2}
\[
(M_{p,q}(t))^q:=2^q\int_{H_R}{\rm exp}\pa{\frac{q(e^{2\zeta_R t}-1)}{2\zeta_R(p-1) t^2}(\overline{r}^2+\|x\|_R^2)}\nu(dx)<+\infty,
\]
and, consequently, for any $f\in C_b(H)$, it holds $\norm{P(t)f}_{L^q(H,\nu)}\leq M_{p,q}(t)\norm{f}_{L^p(H,\nu)}$, whence the claim.
\end{proof}

\begin{rmk}\label{BohBoh}{\rm
\begin{enumerate}[\rm (i)]
\item
As already pointed out in the introduction, even if it should be quite natural to exploit \cite[Theorem 3.1]{AngLor} and the approximation in \eqref{fin_approx} to prove  directly the equivalence (i) $\Leftrightarrow$ (ii) in Theorem \ref{sup_thm}, this approach does not work. Indeed, for every $n\in\N$ and $\delta>0$ the semigroup $\{P_{\delta,n}(t)\}_{t\geq 0}$ admits a unique invariant measure $\nu_{\delta,n}$, but in our general setting, the relation between $\nu_{\delta,n}$ and $\nu$ is not explicit and then it is not possible to relate the supercontractivity property of $\{P(t)\}_{t\geq 0}$ to that of $\{P_{\delta,n}(t)\}_{t\geq 0}$ for any $n\in\N$ and $\delta>0$ and viceversa.
\item
Note that in the case $H=\R^n$, in \cite[Lemma 3.8]{AngLor} the authors provide sufficient conditions on the drift of \eqref{eqF02} to get property (iii) of Theorem \ref{sup_thm} and consequently, to deduce the supercontractivity of $\{P(t)\}_{t\geq 0}$. However, these conditions require the existence of a Lyapunov function for the second order Kolmogorov operator associated to \eqref{eqF02}. In the infinite dimensional setting, the theory of Lyapunov functions is not well developed also because these arguments are based on estimates that depend explicitly on the dimension and that cannot be extended to the infinite dimension.

\item
In many cases, the assumption $\nu(H_R)=1$ is checked assuming that the mild solution of the stochastic partial differential equation \eqref{eqF02} takes values in $H_R$.
In these situations, Theorem \ref{sup_thm} can be applied with $R=\Id$ and replacing $H$ with $H_R$. 
%
%
However, we did not choose this approach to highlight the fact that the hypothesis $\nu(H_R)=1$ is just needed to prove in Theorem \ref{sup_thm} that (iii) implies (i). We believe that it is possible to avoid the condition $\nu(H_R)=1$ if one were able to prove a Harnack inequality of the type \eqref{dae} with $\norm{\cdot}_R$ replaced by $\norm{\cdot}_H$. This type of Harnack inequality has already been proven in the linear case in \cite{Ouy-Roc1}, via a technique that is not applicable in our case. The main problem considering stochastic partial differential equation of the type \eqref{cerrai} is proving a pointwise version of the estimate in \cite[Proposition 6.4.1]{CER1}.

\end{enumerate}}
\end{rmk}

\section{Applications to ultraboundness}\label{IDP0}
Here, we collect some consequences of the results in the previous sections. We start finding conditions that ensure summability properties stronger than the supercontractivity. In particular, the following result allows us to characterize the ultraboundedness of the semigroup $P(t)$, i.e. its boundedness as an operator from $L^p(H, \nu)$ into $L^\infty(H, \nu)$ for any $p>1$ and any $t>0$.

\begin{prop}\label{ultra}
Assume that Hypotheses \ref{dissiR} hold true with $\zeta_R < 0$, $\nu(H_R) = 1$ and consider the following assertions:
\begin{enumerate}[\rm(i)]
\item the semigroup $P(t)$ is ultrabounded, i.e., for any $p > 1$ and $t > 0$:
\[\|P(t)\|_{\mathcal{L}(L^p(H,\nu);L^\infty(H,\nu))}<+\infty;\]

\item for any $\delta>0$ and $\lambda > 0$, there exists $c_{\delta, \lambda} > 0$ such that $\|P(t) \varphi_\lambda\|_{L^\infty(H,\nu)} \leq c_{\delta, \lambda}$ for any $t \geq \delta$ where $\varphi_\lambda(x) := e^{\lambda \|x\|^2_R}$.
\end{enumerate}
Then $(i) \Rightarrow (ii)$. Further, if $P(t)$ is supercontractive then $(ii) \Rightarrow(i)$.
\end{prop}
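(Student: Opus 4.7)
For the first implication, the approach is routine. I would begin by observing that ultraboundedness forces the supercontractivity of $\{P(t)\}_{t\ge 0}$: since $\nu$ is a probability measure, $L^\infty(H,\nu)\hookrightarrow L^q(H,\nu)$ with norm one for every $q\ge 1$, so the bound $\|P(t)\|_{\mathcal{L}(L^p(H,\nu),L^\infty(H,\nu))}<+\infty$ yields $\|P(t)\|_{\mathcal{L}(L^p(H,\nu),L^q(H,\nu))}<+\infty$ for every $1<p<q<+\infty$ and $t>0$. Theorem \ref{sup_thm} then guarantees $\varphi_\lambda\in L^p(H,\nu)$ for every $\lambda>0$ and $p\ge 1$. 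For $t\ge\delta$, the semigroup decomposition $P(t)=P(\delta)P(t-\delta)$ together with the $L^p$-contractivity of $P(t-\delta)$ (coming from the invariance of $\nu$) and the ultraboundedness at time $\delta$ gives
\[\|P(t)\varphi_\lambda\|_{L^\infty(H,\nu)}\le \|P(\delta)\|_{\mathcal{L}(L^p(H,\nu),L^\infty(H,\nu))}\|\varphi_\lambda\|_{L^p(H,\nu)}=:c_{\delta,\lambda}.\]

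For the converse, the plan is to combine the Harnack inequality \eqref{dae} with assumption (ii). Fix $p>1$, $t>0$ and split $t=s_1+s_2$ with $s_1,s_2>0$. Applying \eqref{dae} to $P(s_1)$ with $h=y-x$ for fixed $y\in H_R$, dividing by the exponential factor and integrating in $x$ against $\nu$, the invariance of $\nu$ yields
\[|P(s_1)\varphi(y)|^p\int_{H_R}e^{-\alpha(s_1)\|x-y\|_R^2}\,\nu(dx)\le \|\varphi\|_{L^p(H,\nu)}^p,\qquad \alpha(s_1):=\frac{pK(s_1)}{p-1},\]
with $K(s_1):=(e^{2\zeta_R s_1}-1)/(2\zeta_R s_1^2)>0$. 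The crude bound $\|x-y\|_R^2\le 2\|x\|_R^2+2\|y\|_R^2$ together with $\nu(H_R)=1$ estimates the left integral from below by $e^{-2\alpha(s_1)\|y\|_R^2}I(s_1)$, where $I(s_1):=\int_{H_R}e^{-2\alpha(s_1)\|x\|_R^2}\,\nu(dx)\in (0,1]$. Extracting $p$-th roots produces
\[|P(s_1)\varphi(y)|\le I(s_1)^{-1/p}\,\varphi_{\mu(s_1)}(y)\,\|\varphi\|_{L^p(H,\nu)},\qquad \mu(s_1):=\frac{2K(s_1)}{p-1},\]
holding $\nu$-almost everywhere.

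Applying $P(s_2)$ to both sides, and exploiting the order-preserving property $|P(s_2)u|\le P(s_2)|u|$ and the semigroup law, this becomes
\[|P(t)\varphi(y)|\le I(s_1)^{-1/p}\|\varphi\|_{L^p(H,\nu)}\,(P(s_2)\varphi_{\mu(s_1)})(y),\]
and by (ii) the last factor is essentially bounded by $c_{s_2,\mu(s_1)}$. Choosing for instance $s_1=s_2=t/2$ produces a finite constant $c(t,p)$ with $\|P(t)\varphi\|_{L^\infty(H,\nu)}\le c(t,p)\|\varphi\|_{L^p(H,\nu)}$, i.e.\ the ultraboundedness of $P(t)$.

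The main delicate point is the pointwise passage from the $L^p$ bound on $\varphi$ to the exponential pointwise bound on $|P(s_1)\varphi|$, which is exactly what the Harnack inequality delivers; one must carefully identify a $\nu$-full subset of $H_R$ on which all the estimates hold and justify that (ii) is applicable to $\varphi_{\mu(s_1)}$. This is where the supercontractivity hypothesis is used: by the equivalence with condition (iii) of Theorem \ref{sup_thm}, it guarantees $\varphi_{\mu(s_1)}\in L^r(H,\nu)$ for every $r\ge 1$, so that $P(s_2)\varphi_{\mu(s_1)}$ is well defined and assumption (ii) legitimately applies to it.
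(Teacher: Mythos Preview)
Your proposal is correct and follows essentially the same strategy as the paper. In both directions the ideas match: for $(i)\Rightarrow(ii)$ you use the semigroup law and $L^p$-contractivity where the paper invokes the equivalent monotonicity of $t\mapsto\|P(t)\|_{\mathcal{L}(L^2,L^\infty)}$; for $(ii)\Rightarrow(i)$ both arguments obtain from the Harnack inequality a pointwise bound $|P(t/2)\varphi(y)|\le C\,\varphi_{\mu}(y)\|\varphi\|_{L^p}$ and then apply $P(t/2)$ together with (ii). The only cosmetic difference is that the paper reaches the pointwise bound via estimate \eqref{S1} (integrating the Harnack inequality over a ball $B_R(0,\bar r)$), whereas you integrate over all of $H_R$ and use the elementary inequality $\|x-y\|_R^2\le 2\|x\|_R^2+2\|y\|_R^2$; both routes yield a bound of the same form.
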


\begin{proof}
To begin with, we observe that if $P(t)$ is ultrabounded, then it is supercontractive too. According to Theorem \ref{sup_thm} the functions $\varphi_{\lambda}$ belong to $L^p(H, \nu)$ for any $\lambda>0$ and $p \ge1$. As a consequence, $P(t)\varphi_\lambda$ is well defined $\nu$-a.e. in $H$. To prove that (i) implies (ii), it is enough to observe that
\[\|P(t)\varphi_\lambda\|_{L^\infty(H,\nu)} \le \|P(t)\|_{\mathcal{L}(L^2(H, \nu),L^\infty(H, \nu))}\|\varphi_\lambda\|_{L^2(H, \nu)}\]
Since $t\mapsto\|P(t)\|_{\mathcal{L}(L^2(H, \nu),L^\infty(H, \nu))}$ blows up at $t=0$ and is decreasing in $(0,+\infty)$ we get that $\|P(t)\varphi_\lambda\|_\infty\le c_{\delta, \lambda}$ for any $t \ge \delta$ and some positive constant $c_{\delta, \lambda}$.

Conversely, if $P(t)$ is supercontractive, in order to prove that (ii) implies (i) we take $f \in L^p(H, \nu)$ and for any $\delta>0$, let $M_\delta$ such that $2e^{\zeta_R t}\ge 2+ \zeta_R M_\delta t^2 $ for any $t \ge \delta$.
Utilizing both the semigroup law and inequality \eqref{S1}, we can then deduce that:
\begin{align*}
|P(t)f(x)|=& |P(t/2)P(t/2)f(x)|\le  P(t/2)|P(t/2)f(x)|\\
\le&  P(t/2)\left(2{\rm exp}\pa{\frac{2(e^{\zeta_R t}-1)}{\zeta_R(p-1)t^2}(\overline{r}^2+\|x\|_R^2)}\norm{f}_{L^p(H,\nu)}\right)\\
\le & 2{\rm exp}\pa{\frac{2(e^{\zeta_R t}-1)}{\zeta_R(p-1)t^2}\overline{r}^2}\|f\|_{L^p(H,\nu)}\pa{P(t/2)\pa{{\rm exp}\pa{\frac{2(e^{\zeta_R t}-1)}{\zeta_R(p-1)t^2}(\norm{\cdot}_R^2)}}}(x)\\
\le & 2c_{\delta, (p-1)^{-1}M_\delta}{\rm exp}\pa{\frac{2(e^{\zeta_R t}-1)}{\zeta_R(p-1)t^2}\overline{r}^2}\|f\|_{L^p(H,\nu)},
\end{align*}
for any $t \ge \delta$ where $\overline{r}$ is chosen as in the proof of Theorem \ref{sup_thm}. The arbitrariness of $\delta$ allows us to conclude.
\end{proof}

\subsection{Sufficient conditions for ultraboundness}

We assume the following additional hypotheses on the perturbation $F$.

\begin{hyp}\label{super-dissiR}
Under Hypotheses \ref{dissiR}, assume that $A_R$ is a negative operator and that it holds $F_{|_E}(x)(H_R\cap E)\subseteq H_R$. Suppose further that there exist $a>0$ and a strictly increasing function $\phi:[0,+\infty)\ra [0,+\infty)$ belonging to $C([0,+\infty))$ such that $1/\phi\in L^1([b,+\infty))$, for every $b>0$, $1/\phi\notin L^1([0,+\infty))$, and
\begin{align}
\lim_{s\rightarrow+\infty}s^{-1}\phi(s) &=+\infty,\\
\scal{F_E(x)-F_E(y)}{x-y}_R &\leq a-\phi(\norm{x-y}^2_R), \qquad x,y\in H_R\cap E.\label{YIIK}
\end{align}
\end{hyp}

\begin{prop}
Assume Hypotheses \ref{super-dissiR} hold true. For every $x,y\in H_R$ and $t>0$, it holds
\begin{align}\label{super-stima}
\norm{X(t,x)-X(t,y)}^2_R\leq 2\phi^{-1}(2a)+\psi^{-1}(t/4),\qquad \qc
\end{align}
where $\psi(s):=\int_{s}^{+\infty} 1/\phi(r)dr$.
\end{prop}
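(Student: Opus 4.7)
The plan is to work with the pathwise difference $Z(t) := X(t,x) - X(t,y)$. Since the noise term cancels, $Z$ satisfies (pathwise) the random linear ODE $Z'(t) = AZ(t) + [F(X(t,x)) - F(X(t,y))]$ with $Z(0) = x - y$. Setting $u(t) := \|Z(t)\|_R^2$, the strategy is to derive the pathwise differential inequality $u'(t) \le 2[a - \phi(u(t))]$ and then compare with an explicit scalar ODE to get the claimed bound.

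For the differential inequality, I would first assume $x,y \in H_R \cap E$ and differentiate $u(t)$ in time to get
\[
u'(t) = 2\langle A_R Z(t), Z(t)\rangle_R + 2\langle F(X(t,x)) - F(X(t,y)), Z(t)\rangle_R.
\]
The first term is nonpositive since $A_R$ is negative (Hypotheses \ref{super-dissiR}), and the second is at most $2[a - \phi(u(t))]$ by \eqref{YIIK}. As in the proof of \eqref{stimaR}, this chain-rule computation is legitimized by Yosida-approximating $A_R$, producing a sequence of strict solutions for which the identity holds, and then passing to the limit following \cite[Proposition 3.6]{BI1} or \cite[Proposition 6.2.2]{CER1}.

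The ODE comparison step is the core of the proof. Set $\lambda := \phi^{-1}(2a)$. If $u(t) \le 2\lambda$, the bound is immediate. Otherwise, I would first show that $u(s) > 2\lambda$ for all $s \in [0,t]$: indeed, at any point $\sigma$ where $u(\sigma) = 2\lambda$, the inequality gives $u'(\sigma) \le 2[a - \phi(2\lambda)] < 0$ (since $\phi$ is strictly increasing, $\phi(2\lambda) > \phi(\lambda) = 2a$), so by a standard continuity/maximum-principle argument $u$ cannot re-enter $\{u > 2\lambda\}$ once it has touched $2\lambda$. Consequently $\phi(u(s)) > 2a$ on $[0,t]$, so $u'(s) \le 2a - 2\phi(u(s)) \le -\phi(u(s))$. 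Integrating the inequality $-u'(s)/\phi(u(s)) \ge 1$ on $[0,t]$ yields $\psi(u(t)) - \psi(u(0)) \ge t$, hence $\psi(u(t)) \ge t \ge t/4$, and so $u(t) \le \psi^{-1}(t/4) \le 2\phi^{-1}(2a) + \psi^{-1}(t/4)$. (In fact the argument produces the slightly sharper bound with $\psi^{-1}(t)$ in place of $\psi^{-1}(t/4)$; the stated form follows because $\psi^{-1}$ is decreasing.) The extension to general $x,y \in H_R$ then follows by the density of $E\cap H_R$ in $H_R$ from Hypotheses \ref{hyp2} combined with the continuous dependence \eqref{gmild} of generalized mild solutions on initial data.

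The main obstacle I expect is the rigorous chain-rule step: the difference of two mild solutions generically does not take values in $\Dom(A_R)$, so $\|Z(\cdot)\|_R^2$ is not a priori differentiable, and the Yosida-approximation machinery must be invoked to make the identity for $u'(t)$ precise. A secondary technical point is verifying that \eqref{YIIK} can be applied along the trajectories, which requires $X(s,x), X(s,y) \in H_R \cap E$ for $s \in (0,t]$; this regularity is provided by Theorem \ref{Genmild} together with the structural assumptions encoded in Hypotheses \ref{super-dissiR}.
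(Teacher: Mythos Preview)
Your proposal is correct and follows essentially the same route as the paper: reduce to strict solutions via Yosida approximation of $A_R$ (as in \cite[Proposition 3.6]{BI1} and \cite[Proposition 6.2.2]{CER1}), derive the pathwise differential inequality $\frac{1}{2}\frac{d}{dt}\|X(t,x)-X(t,y)\|_R^2 \le a - \phi(\|X(t,x)-X(t,y)\|_R^2)$ from the negativity of $A_R$ and \eqref{YIIK}, and then run an ODE comparison. The only difference is that the paper delegates the comparison step to \cite[pp.\ 1010--1011]{DaPROWA09}, whereas you spell it out explicitly (and in fact obtain the slightly sharper bound with $\psi^{-1}(t)$); both proofs close with the same density argument for general $x,y\in H_R$.
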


\begin{proof}
It is not restrictive to assume that both $\{X(t,x)\}_{t\geq 0}$ and $\{X(t,y)\}_{t\geq 0}$ are strict solutions of \eqref{SPDE} (with initial datum $x$ and $y$, respectively). Otherwise we proceed as in \cite[Proposition 3.6]{BI1} or \cite[Proposition 6.2.2]{CER1} approximating $\{X(t,x)\}_{t\geq 0}$ and $\{X(t,y)\}_{t\geq 0}$ by means of sequences of more regular processes using the Yosida approximation of $A_R$. Estimates similar to \eqref{stimaE} and \eqref{persobolev} yield that $X(t,x)-X(t,y)\in E\cap H_R$ for any $t>0$, $x,y\in H_R\cap E$.
Now, let us fix $x, y\in E\cap H_R$, multiplying the equation solved by $X(t,x)-X(t,y)$ by the difference $X(t,x)-X(t,y)$ and using Hypotheses \ref{super-dissiR}, we obtain
\begin{align*}
\frac{1}{2}\frac{d}{dt}\|X(t,x)-X(t,y)\|^2_R\leq a-\phi\left(\norm{X(t,x)-X(t,y)}^2_R\right).
\end{align*}
Aguing as in \cite[pp. 1010-1011]{DaPROWA09} and using the density of $E\cap H_R$ in $H_R$ conclude the proof.
\end{proof}

\begin{prop}\label{C-ultra}
Assume that Hypotheses \ref{super-dissiR} hold true, $\nu(H_R)=1$, and that $P(t)$ is supercontractive. Then $P(t)$ is ultrabounded.
\end{prop}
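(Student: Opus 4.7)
The strategy is to apply Proposition \ref{ultra}: since $P(t)$ is assumed supercontractive, it suffices to verify condition (ii) of that proposition, namely that for any $\delta,\lambda>0$ there exists $c_{\delta,\lambda}>0$ such that $\|P(t)\varphi_\lambda\|_{L^\infty(H,\nu)}\leq c_{\delta,\lambda}$ for every $t\geq \delta$, where $\varphi_\lambda(x)=e^{\lambda\|x\|_R^2}$. The main tool is the deterministic (i.e.\ $\mathbb{P}$-a.s.) bound \eqref{super-stima}, which controls $\|X(t,x)-X(t,y)\|_R^2$ uniformly in $x,y\in H_R$ by the quantity $C(t):=2\phi^{-1}(2a)+\psi^{-1}(t/4)$.

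First, I would fix $x,y\in H_R$ and use the triangle inequality in $H_R$ together with the elementary bound $(u+v)^2\leq 2u^2+2v^2$ to write, $\mathbb{P}$-a.s.,
\begin{equation*}
\|X(t,x)\|_R^2\leq 2\|X(t,x)-X(t,y)\|_R^2+2\|X(t,y)\|_R^2\leq 2C(t)+2\|X(t,y)\|_R^2.
\end{equation*}
Exponentiating, multiplying by $\lambda$, and taking expectation yields the pointwise inequality
\begin{equation*}
(P(t)\varphi_\lambda)(x)=\mathbb{E}\bigl[e^{\lambda\|X(t,x)\|_R^2}\bigr]\leq e^{2\lambda C(t)}\,\mathbb{E}\bigl[e^{2\lambda\|X(t,y)\|_R^2}\bigr]=e^{2\lambda C(t)}(P(t)\varphi_{2\lambda})(y),
\end{equation*}
valid for all $x,y\in H_R$.

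Next, since $\nu(H_R)=1$, I can integrate the previous estimate in $y$ against $\nu$ and invoke the invariance of $\nu$ to obtain
\begin{equation*}
(P(t)\varphi_\lambda)(x)\leq e^{2\lambda C(t)}\int_H (P(t)\varphi_{2\lambda})\,d\nu=e^{2\lambda C(t)}\int_H \varphi_{2\lambda}\,d\nu,\qquad x\in H_R.
\end{equation*}
The supercontractivity of $P(t)$, together with Theorem \ref{sup_thm}, guarantees that $\int_H \varphi_{2\lambda}\,d\nu<+\infty$. Finally, since $\psi$ is strictly decreasing, $\psi^{-1}$ is decreasing, so for $t\geq \delta$ we have $C(t)\leq 2\phi^{-1}(2a)+\psi^{-1}(\delta/4)$, which is finite and independent of $t$. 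Combining these observations and using $\nu(H_R)=1$ once more (so that the pointwise bound on $H_R$ is an $L^\infty(H,\nu)$ bound),
\begin{equation*}
\|P(t)\varphi_\lambda\|_{L^\infty(H,\nu)}\leq e^{2\lambda[2\phi^{-1}(2a)+\psi^{-1}(\delta/4)]}\|\varphi_{2\lambda}\|_{L^1(H,\nu)}=:c_{\delta,\lambda},\qquad t\geq\delta,
\end{equation*}
which is precisely condition (ii) of Proposition \ref{ultra}. Since $P(t)$ is supercontractive, the implication (ii)$\Rightarrow$(i) in that proposition delivers the ultraboundedness of $P(t)$.

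The only subtlety I foresee is the careful use of $\nu(H_R)=1$: the estimate \eqref{super-stima} is available for initial data in $H_R$, and one must ensure both that $y$ ranges over a full-measure set (so integration against $\nu$ is legitimate) and that the resulting bound in $x$ is taken on a set of full $\nu$-measure (so as to conclude an $L^\infty(H,\nu)$, rather than merely pointwise, estimate). This is exactly where the hypothesis $\nu(H_R)=1$ is essential.
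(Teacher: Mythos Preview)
Your argument is correct and takes a genuinely different, more elementary route than the paper's proof. Both proofs reduce to verifying condition (ii) of Proposition \ref{ultra}, but the mechanisms differ. The paper splits $P(t)=P(t/2)P(t/2)$, applies Jensen's inequality to get $(P(t)\varphi_\lambda)^2\le P(t/2)[(P(t/2)\varphi_\lambda)^2]$, and then invokes the Harnack inequality \eqref{dae} with the roles of the two initial points played by $X(t/2,x)$ and $X(t/2,y)$; only at that stage does \eqref{super-stima} enter, to bound $\|X(t/2,x)-X(t/2,y)\|_R^2$ inside the Harnack exponential. You instead bypass Harnack entirely: the pathwise bound \eqref{super-stima} plus the triangle inequality in $H_R$ already give $\|X(t,x)\|_R^2\le 2C(t)+2\|X(t,y)\|_R^2$, and exponentiating yields the comparison $(P(t)\varphi_\lambda)(x)\le e^{2\lambda C(t)}(P(t)\varphi_{2\lambda})(y)$ directly. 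Your route is shorter and produces an explicit constant valid for every $\delta>0$, whereas the paper's exponential factor $\exp\bigl(4(e^{\zeta_R t}-1)(\zeta_R t^2)^{-1}(2\phi^{-1}(2a)+\psi^{-1}(t/8))\bigr)$ requires a separate asymptotic argument as $t\to+\infty$. The paper's detour through Harnack is arguably more robust in settings where one only has a bound on $\|X(t,x)-X(t,y)\|_R$ without knowing that $X(t,\cdot)$ itself lives in $H_R$, but under the present hypotheses your direct approach suffices.

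One small technical point you should make explicit: the identity $(P(t)\varphi_\lambda)(x)=\mathbb{E}[e^{\lambda\|X(t,x)\|_R^2}]$ requires justification since $\varphi_\lambda$ is unbounded and $P(t)$ on $L^p(H,\nu)$ is defined by extension, not by the pointwise formula \eqref{trans}. The paper handles this by first working with the bounded truncations $\varphi_{\lambda,\beta}=\varphi_\lambda/(1+\beta\varphi_\lambda)$ and passing to the limit via monotone convergence; you can do the same, or simply note that $\varphi_\lambda^{(n)}:=\min(\varphi_\lambda,n)$ increases to $\varphi_\lambda$ in $L^p(H,\nu)$ (by Theorem \ref{sup_thm}) and pointwise, so the expectation formula survives in the limit $\nu$-a.e.
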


\begin{proof}
By Proposition \ref{ultra} it is sufficient to prove that for any $\delta>0$ and $\lambda > 0$, there exists $c_{\delta, \lambda} > 0$ such that $\|P(t) \varphi_\lambda\|_{L^\infty(H,\nu)} \leq c_{\delta, \lambda}$ for any $t \geq \delta$ where $\varphi_\lambda(x) := e^{\lambda \|x\|^2_R}$.
For every $\beta>0$, we consider the function
\[
\varphi_{\lambda,\beta}(x)=\dfrac{e^{\lambda \|x\|^2_R}}{1+\beta e^{\lambda \|x\|^2_R}}.
\]
By the invariance of $\nu$ and the Jensen inequality we have
\[
\norm{P(s)\varphi_{\lambda,\beta}}_{L^p(H,\nu)}^p=\int_H(P(s)\varphi_{\lambda,\beta})^pd\nu\leq \int_HP(s)(\varphi_{\lambda,\beta})^pd\nu=\int_H(\varphi_{\lambda,\beta})^pd\nu,
\]
by the Monotone convergence theorem and Theorem \ref{sup_thm} we obtain
\[
\norm{P(s)\varphi_{\lambda}}_{L^p(H,\nu)}^p\leq \int_H\varphi_{p\lambda}d\nu<+\infty
\]
so $P(s)\varphi_\lambda\in L^p(H,\nu)$ for every $s\geq 0$, $\lambda>0$ and $p>1$.

By Theorem \ref{dae}, for every $\beta,s,\lambda\geq 0$ and $x,y\in H$ such that $x-y\in H_R$ we get
\begin{align*}
(P(s)\varphi_{\lambda,\beta}(x))^2\leq P(s)\varphi_{\lambda,\beta}^2(y)e^{(e^{2\zeta_Rt}-1)(\zeta_Rt^2)^{-1}\norm{x-y}^2_R}
\end{align*}
letting $\beta\rightarrow 0$ by the monotone convergence theorem we obtain
\begin{align}\label{ss1}
(P(s)\varphi_{\lambda}(x))^2\leq P(s)\varphi_{2\lambda}(y)e^{(e^{2\zeta_Rt}-1)(\zeta_Rt^2)^{-1}\norm{x-y}^2_R}
\end{align}
By \eqref{super-stima}, \eqref{ss1} and the Jensen inequality, for every $\beta,s,\lambda\geq 0$ and $x,y\in H_R$ we obtain
\begin{align*}
(P(t)\varphi_\lambda(x))^2\leq P(t/2)\left[P(t/2)\varphi_\lambda\right]^2(x)\leq P(t)\varphi_{2\lambda}(y)e^{4(e^{\zeta_Rt}-1)(\zeta_Rt^2)^{-1}(2\phi^{-1}(2a)+\psi^{-1}(t/8))}.
\end{align*}
By Hypotheses \ref{super-dissiR} we have that $\psi^{-1}(t)\ra 0$ as $t$ approaches $+\infty$. Hence
\[
\lim_{t\rightarrow +\infty}e^{4(e^{\zeta_Rt}-1)(\zeta_Rt^2)^{-1}(2\phi^{-1}(2a)+\psi^{-1}(t/8))}=1
\]
so there exists $\delta, c_\delta>0$ such that for every $t>\delta$
\begin{align*}
(P(t)\varphi_\lambda(x))^2\leq c_\delta P(t)\varphi_{2\lambda}(y),
\end{align*}
finally integrating both sides with respect to $\nu(dy)$, by the invariance of $\nu$ and Theorem \ref{sup_thm} we get
\begin{align*}
(P(t)\varphi_\lambda(x))^2\leq c_\delta\norm{\varphi_{2\lambda}}_{L^2(H,\nu)}^2, \quad t>\delta,
\end{align*}
so by the equivalence Theorem \ref{ultra} the semigroup $P(t)$ is ultrabounded.
\end{proof}

\section{Examples}\label{exp0}
Here we will provide examples of problems like \eqref{SPDE} where our results can be applied.

\subsection{Infinite dimensional polynomial}
Let $E=H=L^2([0,1]^d,\lambda)$ with $d\in\N$ and $\lambda$ is the Lebesgue measure on $[0,1]^d$. Let $A$ be the realization of the Laplacian operator in $H$ with Dirichlet boundary condition. Let $F:H\rightarrow H$ be a function defined by
\begin{align*}
[F(f)](\xi):=\zeta_Ff(\xi)+\int_0^1\int_0^1\int_0^1 K(\xi_1,\xi_2,\xi_3,\xi) f(\xi_1)f(\xi_2)f(\xi_3)d\xi_1 d\xi_2 d\xi_3,\qquad \xi\in [0,1]^d,
\end{align*}
where $\zeta_F\in\R$ and $K\in L^2([0,1]^{3+d},\lambda)$. For a fixed $\beta>1/2$, we consider the following stochastic partial differential equation
\begin{gather*}
\eqsys{
dX(t)=\left[-(-A)^{\beta}X(t)+F(X(t))\right]dt+(-A)^{-1/2}dW(t), & t>0;\\
X(0)=x\in L^2([0,1]^d,\lambda).
}
\end{gather*}
In this case $R=(-A)^{-1/2}$ and $H_R=W_0^{1,2}([0,1]^d,\lambda)$, i.e. the set of functions belonging to $W^{1,2}([0,1]^d,\lambda)$ and having null trace on the boundary of $[0,1]^d$.
Let $\{\lambda_k\}_{k\in\N}$ be the sequence of eigenvalues of $A$ in $H$. Recall that, for every $d\in\N$ we have
\begin{equation}\label{autoval-laplaciano}
\lambda_k\approx -k^{2/d},\qquad k\in\N.
\end{equation}
Note that, thanks to \eqref{autoval-laplaciano}, Hypothesis \ref{main}(iv) holds true for every $\beta>d/2-1$. Moreover assuming that $(\partial K/\partial \xi)\in L^2([0,1]^{3+d},\lambda)$ and that $K$ and $(\partial K/\partial \xi)$ have symmetric versions (see \cite[Formula (5.10)]{ABF21}) then all the assumptions in Theorems \ref{logsob_pro} and \ref{Hyper} are satisfied. We refer to  \cite[Examples 5.3 and 5.4]{ABF21} for detailed calculations.

\subsection{Radial perturbation}
Let $f:[0,+\infty)\rightarrow [0,+\infty)$ be an increasing and differentiable function.
We consider the function $U:H\rightarrow \R$ defined by
\begin{equation}\label{U}
U(x)=-f(\norm{x}_H^2),\qquad x\in H.
\end{equation}
and the transition semigroup $P(t)$ associated to
\begin{align}
\eqsys{dX(t)=[AX(t)+\nabla U(X(t))]dt+dW(t) \qquad\;\, & t>0,\\
X(0)=x \in H,}
\end{align}
where $A$ is the realization of the second order derivative with Dirichlet boundary condition in $H=L^2([0,1])$.
It is well known that $P(t)$ has a unique invariant measure given by
\[
\nu(dx)=e^U\mu(dx),\qquad \mu\sim\mathcal{N}(0,(-A)^{-1}).
\]
We now want to find sufficient conditions that guarantee the supercontractivity and ultraboundness of $P(t)$.

\subsubsection*{Supercontractivity}
Here we prove that if
\begin{equation}\label{limite}
\lim_{s\rightarrow+\infty}f(s)s^{-1}=+\infty,
\end{equation}
then $P(t)$ is supercontractive, i.e. $\int_He^{\lambda\|x\|^2}d\nu<+\infty$ for any $\lambda>0$.
To this aim, let $\alpha_0>0$ be the optimal exponent given by Fernique Theorem such that for every $\delta<\alpha_0$ it holds
\begin{equation}\label{Fernique}
\int_He^{\delta\norm{x}_H^2}\mu(dx)<+\infty.
\end{equation}
For every $\lambda>0$ we have
\begin{align}\label{VoidStranger}
\int_{H}e^{\lambda\norm{x}^2}\nu(dx) &=\int_{H}e^{\lambda\norm{x}^2}e^{U}\mu(dx)\notag\\
&=\int_{H}e^{\lambda\norm{x}^2_H-f(\norm{x}_H^2)}\mu(dx)=\int_{H}e^{\alpha_0\norm{x}^2_H\left(\frac{\lambda}{\alpha_0}-\frac{f(\norm{x}_H^2)}{\alpha_0\norm{x}^2_H}\right)}\mu(dx)
\end{align}
By \eqref{limite} there exists $r_0>0$ such that for every $x\in H$ with $\|x\|^2>r_0$ it holds
\begin{align}\label{JWB}
f(\norm{x}_H^2)>(\lambda-\alpha_0)\|x\|^2.
\end{align}
Let $B_{\sqrt{r_0}}$ the ball of $H$ with radius $\sqrt{r_0}$. By \eqref{Fernique}, \eqref{VoidStranger} and \eqref{JWB} for every $\lambda>0$ there exists $\delta<\alpha_0$ such that
\begin{align}\label{Fernique+}
\int_{H}e^{\lambda\norm{x}^2}e^{U}\mu(dx)\leq e^{\lambda r_0}+\int_{H\backslash B_{\sqrt{r_0}}}e^{\delta\norm{x}^2}\mu(dx)<+\infty
\end{align}
Hence by \eqref{Fernique+} and Theorem \ref{sup_thm}, $P(t)$ is supercontractive.

\subsubsection*{Ultraboundness}
Here we assume that $\lim_{s\ra+\infty}f(s)=+\infty$ and that
\begin{align}
-2\langle f'(\norm{x}_H^2)x-f'(\norm{y}_H^2)y,x-y\rangle &\leq a-\phi(\norm{x-y}^2), \qquad x,y\in H.\label{FYIIK}
\end{align}
where  $a>0$ and $\phi:[0,+\infty)\ra [0,+\infty)$ is a strictly increasing function belonging to $C([0,+\infty))$ such that $1/\phi\in L^1([b,+\infty))$, for every $b>0$, $1/\phi\notin L^1([0,+\infty))$. It easy to see that the function $U$ defined in \ref{U} verifies Hypotheses \ref{super-dissiR} with $R=\Id_H$.

We claim that $\lim_{s\ra+\infty}s^{-1}f(s)=+\infty$. 
Taking $y=0$ in \eqref{FYIIK} we obtain
\begin{align}\label{stecca}
-2 f'(\norm{x}_H^2)\norm{x}_H^2\leq a-\phi(\norm{x}_H^2).
\end{align}
Setting $r=\norm{x}^2$ in \eqref{stecca} and dividing both members by $-2r$ we get
\begin{align}\label{stecca2}
f'(r)\geq -\frac{a}{2r}+\frac{\phi(r)}{2r}.
\end{align}
By the l'H\^opital rule, the fact that $\lim_{s\ra+\infty}f(s)=+\infty$ and \eqref{stecca2} we get
\begin{align*}
\lim_{s\ra+\infty}\frac{f(s)}{s}=+\infty.
\end{align*}
So the claim in proved. Hence, by the first part of the example, $P(t)$ is supercontractive and by Proposition \ref{C-ultra} the semigroup $P(t)$ is ultrabounded, too.

\subsection{A reaction--diffusion system}\label{RDE}
We consider the following stochastic reaction-diffusion equation:
\begin{align}\label{eqF02}
\begin{cases}
dX(t)= \left[\frac{\partial^2}{\partial\xi^2}X(t) - b(X(t))\right]dt + dW(t), & t > 0;\\
X(t)(0)= X(t)(1), & t > 0;\\
X(0) = x\in L^2([0,1],\lambda).
\end{cases}
\end{align}
where $\lambda$ is the Lebesgue measure on $[0,1]$ and $b:\R\ra\R$ is a smooth enough function satisfying the following conditions:

\begin{hyp}\label{RDS1}
The function $b:\R\ra\R$ belongs to $C^3(\R)$ and
\begin{enumerate}[\rm (i)]
\item there exists an integer $m\geq 0$ such that for every $j=0,1,2,3$ it holds
\begin{align*}
\sup_{z\in \R}\frac{\abs{\J^j_zb(z)}}{1+|z|^{\max\{0,2m+1-j\}}}<+\infty,
\end{align*}
where $\J^0_zb$ denotes simply the function $b$;

\item there exists $a>0$ such that for every $z,h\in \R$ it holds $(b(z+h)-b(z))h\leq -a h^{2}$.

\end{enumerate}
\end{hyp}
\noindent A simple example of function $b$ that verifies Hypotheses \ref{RDS1} is
\begin{equation}\label{b-piccolo}
b(z):=C_{2m+1}z^{2m+1}+\sum^{2m}_{k=0}C_kz^k, \qquad z\in\R,
\end{equation}
where $C_0,\ldots,C_{2m}\in\R$ and $C_{2m+1}>0$. The stochastic equation \eqref{eqF02} can be rewritten in an abstract way as \eqref{SPDE} where $A$ is the realization in $L^2([0,1],\lambda)$ of the second order derivative with Dirichlet boundary conditions, $R=\Id_{L^2([0,1],\lambda)}$ and $F:C([0,1])\ra C([0,1])$ is the Nemytskii operator defined as
\[
F(x)(\xi):=-b(x(\xi)),\qquad \xi\in [0,1],\; x\in C([0,1]).
\]
Note that Hypotheses \ref{main} (with $E=C([0,1])$) are satisfied for the equation \eqref{eqF02}, see  \cite[Chapter 6]{CER1}) and since $R=\Id_{L^2([0,1],\lambda)}$ also Hypotheses \ref{hyp2} and \ref{dissiR} hold true. Hence all the results in this paper can be applied to the transition semigroup $P(t)$ associated to \eqref{eqF02}. Finally we show that under an additional assumption on $b$ the invariant measure $\nu$ of $P(t)$ verifies the condition in Theorem \ref{sup_thm}(iii).

Let $B:\R\ra\R$ be a primitive of $b$ and consider $U:L^2([0,1],\lambda)\ra \R$ given by
\[
U(x)=\begin{cases}
\int^1_0 B(x(\xi))d\xi, & x\in C([0,1]);\\
0, & x\not\in C([0,1]).
\end{cases}
\]
Let $\mu\sim N(0,(-A)^{-1})$. By \cite[Proposition 5.2]{DA-LU2}, the function $U$ belongs to $W^{1,p}(L^2([0,1],\lambda),\mu)$, for any $p\geq 1$, and
\[
\nabla U(x)(\xi)=b(x(\xi))=-F(x)(\xi),\qquad x\in C([0,1]),\ \xi\in[0,1].
\]
Since $U$ is a potential of $-F$, then the invariant measure $\nu$ of $P(t)$ is the Gibbs measure given by
\[
d\nu=e^{-U}d\mu.
\]
Hence if we assume that there exist $c,\eps>0$ such that for any $\xi\in\R$ it holds that
\begin{equation}\label{condB}
B(\xi)\geq c|\xi|^{2+\eps},
\end{equation}
then for any $\lambda>0$ we have
\begin{align*}
\int_{L^2([0,1],\lambda)}e^{\lambda||x||^2_{L^2([0,1],\lambda)}}\nu(dx)&=\int_{L^2([0,1],\lambda)} \exp\left(||x||^2_{L^2([0,1],\lambda)}-\int^1_0B(x(\xi))d\xi\right)\mu(dx)\\
&\leq \int_{L^2([0,1],\lambda)} \exp\left(||x||^2_{L^2([0,1],\lambda)}-c\int^1_0|x(\xi)|^{2+\eps}d\xi\right)\mu(dx)\\
&\leq \int_{L^2([0,1],\lambda)} \exp\left(||x||^2_{L^2([0,1],\lambda)}-c\left(\int^1_0|x(\xi)|^{2}d\xi\right)^{(2+\eps)/2}\right)\mu(dx)\\
&=\int_{L^2([0,1],\lambda)} \exp\left(||x||^2_{L^2([0,1],\lambda)}-c||x||^{2+\eps}_{L^2([0,1],\lambda)}\right)\mu(dx)<+\infty
\end{align*}
and so $\nu$ verifies the conditions in Theorem \ref{sup_thm}(iii) and $P(t)$ is supercontractive. Observe that condition \eqref{condB} is satisfied if $m\geq 1$ (see Hypothesis \ref{RDS1}(i)). Moreover we stress that if for example
\[
b(z)=Cz^{2m+1}
\]
for some $m\in\N$ and $C>0$, then the function $F$ verify Hypotheses \ref{super-dissiR} with $\phi(s)\approx s^{2m+2}$. Hence the assumptions of Proposition \ref{C-ultra} hold true and the semigroup $P(t)$ is ultrabounded.

\vspace{0.6cm}

\section*{Statements and Declarations}

\noindent {\bf Competing Interests.} The authors do not have financial or non-financial interests that are directly or indirectly related to the work submitted for publication.

\noindent {\bf Data Availability Statements.} Data sharing not applicable to this article as no datasets were generated or analysed during the current study.

\appendix

\section{A Lasry--Lions type approximations}\label{LASRY}

In this appendix, we recall some properties of the Lasry-Lions type approximants along $H_R$ (see \cite{BF23,BFFZ23}). The construction of such approximants is a modification of the original idea presented in \cite{LL86}.
For $\eps>0$ and $f\in \mathrm{BUC}(H)$, we consider the functions defined as follows:
\begin{align}\label{LLCD}
f_\eps(x):=\sup_{h\in H_R}\set{\inf_{k\in H_R}\set{f(x+k-h)+\frac{1}{2\eps}\|k\|_{R}^2}-\frac{1}{\eps}\|h\|_{R}^2},\qquad x\in H.
\end{align}
In the following proposition, we summarize some basic properties of the approximants defined in \eqref{LLCD} that will be useful in the paper. Some of the results we will present are already contained in \cite{BF23} and \cite{BFFZ23}. However, we provide a proof of them to make the paper self-contained.

\begin{prop}\label{Prop_LASRY}
Assume Hypothesis \ref{main}(ii) and let $f\in {\rm Lip}_{H_R}(H)$. Let $\{f_\eps\}_{\eps\geq 0}$ be the family of functions introduced in \eqref{LLCD}. For every $\eps>0$ and $x\in H$ it holds
\begin{align}
\|f_\eps\|_\infty&\leq\|f\|_\infty;\label{LL_limitatezza}\\
0\leq f(x)-f_\eps(x)&\leq 4\eps({\rm Lip}_R(f))^2;\label{LL_approximation}\\
\|\nabla_Rf_\eps\|_\infty &\leq 4\sqrt{2}{\rm Lip}_R(f).\label{LL_derivative_estimate}
\end{align}
\end{prop}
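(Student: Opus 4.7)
The plan is to prove each of the three bounds separately. For the sup-norm bound \eqref{LL_limitatezza}, I would proceed by direct substitution: taking $k=0$ in the inner infimum gives $\inf_k\set{f(x+k-h)+(2\eps)^{-1}\norm{k}_R^2}\le f(x-h)\le\norm{f}_\infty$, so subtracting $\eps^{-1}\norm{h}_R^2\ge 0$ and taking the supremum over $h$ yields $f_\eps(x)\le\norm{f}_\infty$. Symmetrically, choosing $h=0$ in the outer supremum gives $f_\eps(x)\ge\inf_k\set{f(x+k)+(2\eps)^{-1}\norm{k}_R^2}\ge-\norm{f}_\infty$.

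For \eqref{LL_approximation}, the upper estimate $f_\eps(x)\le f(x)$ follows by plugging $k=h$ in the inner infimum, which produces the value $f(x)+(2\eps)^{-1}\norm{h}_R^2$; after subtracting $\eps^{-1}\norm{h}_R^2$ the outer supremum collapses to $\sup_h\set{f(x)-(2\eps)^{-1}\norm{h}_R^2}=f(x)$. For the lower estimate I would take $h=0$ in the outer supremum and use the $H_R$-Lipschitz property of $f$ to obtain $f(x+k)\ge f(x)-{\rm{Lip}}_R(f)\norm{k}_R$, so that $f_\eps(x)\ge f(x)+\inf_{t\ge 0}\set{-{\rm{Lip}}_R(f)\,t+(2\eps)^{-1}t^2}=f(x)-\eps({\rm{Lip}}_R(f))^2/2$, which is actually sharper than the claimed constant $4\eps({\rm{Lip}}_R(f))^2$.

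The delicate part is \eqref{LL_derivative_estimate}, which requires both $H_R$-differentiability of $f_\eps$ everywhere on $H$ and a quantitative gradient bound. As a first step, I would establish $H_R$-Lipschitz continuity of $f_\eps$ (with constant $\mathrm{Lip}_R(f)$) by noting that, for $x,y\in H$ with $y-x\in H_R$, a uniform translation in the variable $h$ (or $k$) changes the inner expression by at most ${\rm{Lip}}_R(f)\norm{y-x}_R$ independently of $h,k$, so the sup-inf inherits the same constant. Differentiability would then follow from the semi-convex/semi-concave structure of the inf-sup convolution: rewriting $f^\eps(z):=\inf_k\set{f(z+k)+(2\eps)^{-1}\norm{k}_R^2}$ as a Moreau--Yosida regularization shows that $z\mapsto f^\eps(z)-(2\eps)^{-1}\norm{z}_R^2$ is concave along $H_R$, while the outer sup-convolution forces $x\mapsto f_\eps(x)+\eps^{-1}\norm{x}_R^2$ to be convex along $H_R$ as a supremum of affine functions. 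The combination of $H_R$-Lipschitz continuity with this semi-convex/semi-concave structure along the Hilbert subspace $H_R$ yields the existence of $\nabla_R f_\eps(x)$ for every $x\in H$, and an envelope-type argument identifies $\nabla_R f_\eps(x)$ in terms of the (approximate) extremizers $h^{*}$ and $k^{*}$.

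The main obstacle is precisely this envelope/extremizer step: because the inf and sup are taken over the dense but generally not closed subspace $H_R$, attainment is not automatic, so one must work with almost-maximizing sequences and use the strong convexity/concavity of the quadratic penalties to extract convergence and a uniform quantitative estimate. This yields the constant $4\sqrt{2}\,{\rm{Lip}}_R(f)$, which is deliberately non-optimal: it arises by combining the two Lipschitz transfers (one for each of the inner and outer convolution layers) with the factor coming from controlling $\|k^{*}\|_R$ and $\|h^{*}\|_R$ through the approximation bound in \eqref{LL_approximation}.
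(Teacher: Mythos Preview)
Your arguments for \eqref{LL_limitatezza} and \eqref{LL_approximation} are correct and in fact cleaner than the paper's. The paper also obtains $f_\eps\le f$ by inserting $k=h$, but for the lower bound in \eqref{LL_approximation} it works with an approximate minimizer $k_\eta$ of the inner infimum, derives the a priori estimate $\|k_\eta\|_R^2\le 4\eps^2(\mathrm{Lip}_R(f))^2+4\eps\eta$, and feeds this back into the Lipschitz bound, which is how the constant $4\eps$ appears. Your direct quadratic minimization bypasses this and gives the sharper $\eps/2$.

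For \eqref{LL_derivative_estimate} the routes diverge more substantially. The paper does not prove $H_R$-differentiability either---it cites an external reference---and then establishes the one-sided increment estimate
\[
f_\eps(x+h)-f_\eps(x)\le \tfrac{1}{\eps}\|h\|_R^2+4\sqrt{2}\,\mathrm{Lip}_R(f)\,\|h\|_R
\]
by taking an approximate maximizer $h_\sigma$ for $f_\eps(x)$, bounding $\|h_\sigma\|_R^2\le 8\eps^2(\mathrm{Lip}_R(f))^2+2\eps\sigma$ via \eqref{LL_approximation}, and testing the supremum for $f_\eps(x+h)$ at $h+h_\sigma$; dividing by $\|h\|_R$ and letting $\|h\|_R\to 0$ yields the constant $4\sqrt{2}$. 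Your translation argument is both simpler and stronger: since $|f(x+\ell+k-h)-f(x+k-h)|\le \mathrm{Lip}_R(f)\|\ell\|_R$ uniformly in $h,k$, the sup--inf inherits the same constant, and once differentiability is granted this gives $\|\nabla_R f_\eps\|_\infty\le \mathrm{Lip}_R(f)$ outright.

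Two small corrections. First, $(H_R,\|\cdot\|_R)$ is a complete Hilbert space, so your phrase ``dense but generally not closed'' is off; the genuine obstacle to attainment is lack of norm compactness in infinite dimension, which is indeed why one works with almost-extremizers as you say. Second, your final paragraph explaining the provenance of the constant $4\sqrt{2}$ is inconsistent with your own argument: it describes the paper's extremizer route rather than yours, and is superfluous once you have the direct Lipschitz bound above.
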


\begin{proof}
We start by proving \eqref{LL_limitatezza}.
\begin{align}
f_\eps(x)&=\sup_{h\in H_R}\set{\inf_{k\in H_R}\set{f(x+h-k)+\frac{1}{2\eps}\|k\|_{R}^2}-\frac{1}{\eps}\|h\|_{R}^2}\notag\\
&\leq \sup_{h\in H_R}\set{f(x)+\frac{1}{2\eps}\|h\|_{R}^2-\frac{1}{\eps}\|h\|_{R}^2}\leq f(x)\leq \|f\|_\infty.\label{Gerald1}
\end{align}
In a similar way
\begin{align}
f_\eps(x)&=\sup_{h\in H_R}\set{\inf_{k\in H_R}\set{f(x+h-k)+\frac{1}{2\eps}\|k\|_{R}^2}-\frac{1}{\eps}\|h\|_{R}^2}\notag\\
&\geq \inf_{k\in H_R}\set{f(x-k)+\frac{1}{2\eps}\|k\|_{R}^2}\geq -\|f\|_\infty.\label{Gerald2}
\end{align}
By \eqref{Gerald1} and \eqref{Gerald2} we get \eqref{LL_limitatezza}.

Now we prove \eqref{LL_approximation}. By \eqref{LLCD}, for every $\eta>0$ there exists $k_\eta\in H_R$ such that
\begin{align}
0\leq f(x)-f_\eps(x)&\leq f(x)-\inf_{k\in H_R}\set{f(x-k)+\frac{1}{2\eps}\|k\|_{R}^2}\notag\\
&\leq f(x)-f(x-k_\eta)-\frac{1}{2\eps}\|k_\eta\|_{R}^2+\eta\notag\\
&\leq {\rm Lip}_R(f)\|k_\eta\|_{R}-\frac{1}{2\eps}\|k_\eta\|_{R}^2+\eta.\label{Regan}
\end{align}
By \eqref{Regan} we get the estimate
\begin{align*}
\|k_\eta\|_{R}^2\leq 2\eps {\rm Lip}_R(f)\|k_\eta\|_{R}+2\eps \eta,
\end{align*}
and the Young inequality yields
\begin{align}\label{Truman}
\|k_\eta\|_{R}^2\leq 4\eps^2 ({\rm Lip}_R(f))^2+4\eps\eta.
\end{align}
Combining \eqref{Regan} and \eqref{Truman} we obtain
\begin{align*}
0\leq f(x)-f_\eps(x)&\leq {\rm Lip}_R(f)(4\eps^2 ({\rm Lip}_R(f))^2+4\eps\eta)^{1/2}+2\eps({\rm Lip}_R(f))^2+3\eta.
\end{align*}
Since the above estimate holds for every $\eta>0$, by choosing $\eta$ arbitrarily small, we get \eqref{LL_approximation}.

Let us now prove \eqref{LL_derivative_estimate}. The differentiability along $H_R$ of $f_\eps$ is classical and can be found in \cite{BF23}. By \eqref{LLCD} for every $\sigma>0$ there exists $h_\sigma\in H_R$ such that
\begin{align*}
f_\eps(x)\leq \inf_{k\in H_R}\set{f(x+h_\sigma-k)+\frac{1}{2\eps}\|k\|_{R}^2}-\frac{1}{\eps}\|h_\sigma\|_{R}^2+\sigma.
\end{align*}
A straightforward calculation gives
\begin{align*}
\frac{1}{\eps}\|h_\sigma\|_{R}^2\leq f(x)-f_\eps(x)+\sigma+\frac{1}{2\eps}\|h_\sigma\|_{R}^2.
\end{align*}
Thus from \eqref{LL_approximation} we obtain
\begin{align}\label{Discipline}
\|h_\sigma\|_{R}^2\leq 8\eps^2({\rm Lip}_R(f))^2+2\eps\sigma.
\end{align}
By \eqref{Discipline} we get
\begin{align*}
f_\eps(x+h)-f_\eps(x)&\leq \inf_{k\in H_R}\set{f(x+h+h_\sigma-k)+\frac{1}{2\eps}\|k\|_{R}^2}-\frac{1}{\eps}\|h_\sigma\|_{R}^2+\sigma\\
& -\inf_{k\in H_R}\set{f(x+h+h_\sigma-k)+\frac{1}{2\eps}\|k\|_{R}^2}+\frac{1}{\eps}\|h+h_\sigma\|_{R}^2\\
&=\frac{1}{\eps}\|h+h_\sigma\|_{R}^2-\frac{1}{\eps}\|h_\sigma\|_{R}^2+\sigma=\frac{1}{\eps}\|h\|_{R}^2+\frac{2}{\eps}\langle h,h_\sigma\rangle_{R}+\sigma\\
&\leq \frac{1}{\eps}\|h\|_{R}^2+\frac{2}{\eps}\|h\|_{R}(8\eps^2({\rm Lip}_R(f))^2+2\eps\sigma)^{1/2}+\sigma.
\end{align*}
Since the above inequality holds for every $\sigma>0$, taking the infimum we get
\begin{align}\label{senza_mod}
f_\eps(x+h)-f_\eps(x)\leq \frac{1}{\eps}\|h\|_{R}^2+4\sqrt{2}\|h\|_{R}{\rm Lip}_R(f).
\end{align}
Arguing similarly, we deduce the same inequality for $f_\eps(x)-f_\eps(x+h)$. In this way estimate \eqref{senza_mod} holds true for $|f_\eps(x+h)-f_\eps(x)|$. Thus, dividing by $\|h\|_R$ and letting $\|h\|_R\to 0$ we complete the proof.
\end{proof}

\end{document}